\newif\ifpersonal
\personaltrue 
\documentclass[11pt, twoside]{article} 

\usepackage{relsize,amsmath,amsthm,mathtools,bm,tensor} 
\usepackage{microtype,lmodern} 
\usepackage[utf8]{inputenc} 
\usepackage[T1]{fontenc} 
\usepackage{enumerate,comment,braket,xspace,tikz-cd,csquotes} 
\usepackage[centering, vscale=0.8, hscale=0.75]{geometry}
\usepackage[textwidth=20mm, textsize=tiny]{todonotes}
\setlength{\marginparwidth}{2cm} 
\usepackage{xr}
\usepackage[hidelinks,linktoc=all]{hyperref}

\usepackage{epigraph}
\usepackage[garamond,cal=cmcal]{mathdesign}
\usepackage{eucal}

\usepackage[capitalize]{cleveref}
\usepackage{fancyhdr}
\usepackage{lscape}
\usepackage{adjustbox}
\usepackage{faktor}
\fancyhf{}
\cfoot{\thepage}
\fancyhead[LO]{\nouppercase{\textsc\rightmark}}
\fancyhead[RE]{\nouppercase{\textsc\leftmark}}
\pagestyle{fancy}
\numberwithin{equation}{section}
\usepackage{appendix}
\usepackage{stmaryrd}
\usepackage{subfiles}
\setlength{\headheight}{13.59999pt}

\theoremstyle{plain}
\newtheorem{thm-intro}{Theorem}[section]
\newtheorem{thm}{Theorem}[section]
\newtheorem{lem}[thm]{Lemma}

\newtheorem{prop}[thm]{Proposition}

\newtheorem{cor}[thm]{Corollary}

\theoremstyle{definition}
\newtheorem{defin}[thm]{Definition}
\newtheorem{recall}[thm]{Recall}
\newtheorem{notation}[thm]{Notation}
\newtheorem{eg}[thm]{Example}
\newtheorem{rem}[thm]{Remark}
\newtheorem{construction}[thm]{Construction}
\newtheorem{note}[thm]{Note}
\newtheorem{warning}[thm]{Warning}
\setcounter{tocdepth}{2}

\ifpersonal
\newcommand*{\personal}[1]{\textcolor[rgb]{0,0,1}{#1}}
\else
\newcommand*{\personal}[1]{\ignorespaces}
\renewcommand*{\todo}[1]{\ignorespaces}
\fi

\DeclareFontFamily{U}{mathx}{}
\DeclareFontShape{U}{mathx}{m}{n}{<-> mathx10}{}
\DeclareSymbolFont{mathx}{U}{mathx}{m}{n}
\DeclareMathAccent{\widehat}{0}{mathx}{"70}
\DeclareMathAccent{\widecheck}{0}{mathx}{"71}

\makeatletter
\newcommand{\subjclass}[2][2020]{%
	\let\@oldtitle\@title%
	\gdef\@title{\@oldtitle\footnotetext{#1 \emph{Mathematics subject classification.} #2}}%
}
\newcommand{\keywords}[1]{%
	\let\@@oldtitle\@title%
	\gdef\@title{\@@oldtitle\footnotetext{\emph{Key words and phrases.} #1.}}%
}
\makeatother

\newcommand{\rr}{\mathbb R}

\renewcommand{\epsilon}{\varepsilon}

\newcommand{\C}{\mathbb C}

\newcommand{\E}{\mathbb E}
\newcommand{\Q}{\mathbb Q}

\newcommand{\cA}{\mathcal A}
\newcommand{\cB}{\mathcal B}
\newcommand{\cc}{\mathcal C}
\newcommand{\cD}{\mathcal D}
\newcommand{\cE}{\mathcal E}
\newcommand{\cF}{\mathcal F}
\newcommand{\cH}{\mathcal H}
\newcommand{\cG}{\mathcal G}
\newcommand{\cI}{\mathcal I}
\newcommand{\cJ}{\mathcal J}
\newcommand{\cK}{\mathcal K}

\newcommand{\cO}{\mathcal O}
\newcommand{\cP}{\mathcal P}

\newcommand{\cS}{\mathcal S}
\newcommand{\cT}{{\mathcal T}}
\newcommand{\cU}{\mathcal U}

\newcommand{\cX}{\mathcal X}
\newcommand{\cY}{\mathcal Y}
\newcommand{\cZ}{\mathcal Z}
\DeclareFontFamily{U}{BOONDOX-calo}{\skewchar\font=45 }
\DeclareFontShape{U}{BOONDOX-calo}{m}{n}{<-> s*[1.05] BOONDOX-r-calo}{}
\DeclareFontShape{U}{BOONDOX-calo}{b}{n}{<-> s*[1.05] BOONDOX-b-calo}{}
\DeclareMathAlphabet{\mathcalboondox}{U}{BOONDOX-calo}{m}{n}

\newcommand{\bDelta}{\boldsymbol\Delta}

\newcommand{\Gm}{{\mathbb G_{\textup{m}}}}

\newcommand{\Aut}{\textup{Aut}}
\newcommand{\Sch}{\textup{Sch}}

\newcommand{\Mod}{\textup{Mod}}
\newcommand{\fd}{\textup{fd}}

\newcommand{\op}{\textup{op}}
\newcommand{\Rep}{\textup{Rep}}

\newcommand{\Bun}{\textup{Bun}_G}


\newcommand{\PSh}{\textup{PSh}}
\newcommand{\Sh}{\textup{Sh}}
\newcommand{\Shv}{\textup{Shv}}

\renewcommand{\Pr}{\textup{Pr}}

\newcommand{\Grpd}{\textup{Grpd}}

\newcommand{\Coh}{\textup{Coh}}

\newcommand{\Aff}{\textup{Aff}}

\newcommand{\Top}{{\textup{Top}}}

\newcommand{\Ran}{\textup{Ran}}

\newcommand{\Alg}{\textup{Alg}}
\newcommand{\CAlg}{\textup{\Alg}_\C}

\newcommand{\Fact}{\textup{Fact}(M)^\otimes}

\newcommand{\Corr}{\textup{Corr}}

\newcommand{\Cat}{\mathcal C\textup{at}_\infty}
\newcommand{\cat}{\mathcal C\textup{at}}

\newcommand{\sets}{\textup{Set}}

\newcommand{\Disk}{\textup{Disk}}
\newcommand{\loc}{\textsf{loc}}
\newcommand{\Prl}{\mathcal P\textup{r}^{\textup L}}
\newcommand{\Prlo}{\mathcal P\textup{r}^{\textup L,\otimes}}
\newcommand{\Prr}{\mathcal P\textup{r}^{\textup{R}}}
\newcommand{\Prro}{\mathcal P\textup{r}^{\textup{R},\otimes}}
\newcommand{\Prlop}{\mathcal P \textup{r}^{\textup{L}, \otimes\textup{-}\op}}

\newcommand{\ett}{\textsf{\'et}}
\newcommand{\Gr}{\textup{Gr}}
\newcommand{\QCoh}{{\textup{QCoh}}}

\newcommand{\an}{\textup{an}}
\newcommand{\const}{\textup{const}}
\newcommand{\Open}{\textup{Open}}

\newcommand{\inj}{\textup{inj}}

\newcommand{\dinj}{\bDelta_\inj^\op}

\newcommand{\nun}{\textup{nu}}
\newcommand{\glob}{\textup{glob}}
\renewcommand{\vert}{\textsf{vert}}
\newcommand{\horiz}{\textsf{horiz}}
\newcommand{\isom}{\textsf{isom}}
\newcommand{\adm}{\textsf{adm}}
\newcommand{\all}{\textsf{all}}

\newcommand{\coadm}{\textsf{co}\textup{-}\textsf{adm}}

\newcommand{\id}{\textup{id}}
\newcommand{\union}{\textup{union}}
\newcommand{\disj}{\textup{disj}}

\newcommand{\hdis}{(\hck_{\Ran,k}\times \hck_{\Ran,k})_{\disj}}

\newcommand{\Sing}{\textup{Sing}}
\newcommand{\dc}{\cD_\textup{c}}

\newcommand{\Perv}{{\mathcal P}\textup{erv}}


\newcommand{\triv}{|_{X_R\setminus \Gamma_S}\xrightarrow{\sim}\cT|_{X_R\setminus \Gamma_S}}

\newcommand{\Stk}{\textup{Stk}}
\newcommand{\topp}{\textup{top}}

\newcommand{\Conv}{\textup{Conv}}
\newcommand{\hck}{{\textup{Hck}}}
\newcommand{\Hck}{\cH{\textup{ck}}}

\newcommand{\red}{{\textup{red}}}

\newcommand{\con}{\textup{con}}

\newcommand{\Sph}{{\textup{Sph}}}
\newcommand{\ren}{{\textup{ren}}}
\newcommand{\locc}{{\textup{loc.c}}}

\newcommand{\str}{\textsf{str}}

\newcommand{\Str}{\textup{Str}}
\newcommand{\Cw}{\textup{Cw}}
\newcommand{\Poset}{\textup{Poset}}
\newcommand{\Cons}{\textup{Cons}}

\newcommand{\Grp}{\textup{Grp}}

\newcommand{\she}{\mathsf{she}}

\newcommand{\Alex}{\textup{Alex}}

\newcommand{\Exit}{\textup{Exit}}
\newcommand{\corr}{\textup{corr}}

\newcommand{\sS}{\mathscr S}

\newcommand{\Lan}{\textup{Lan}}
\newcommand{\Fun}{\textup{Fun}}


\usetikzlibrary{decorations.markings} 
\tikzset{
	closed/.style = {decoration = {markings, mark = at position 0.5 with { \node[transform shape, xscale = .8, yscale=.4] {/}; } }, postaction = {decorate} },
	open/.style = {decoration = {markings, mark = at position 0.5 with { \node[transform shape, scale = .7] {$\circ$}; } }, postaction = {decorate} }
}


\DeclareMathOperator{\Hom}{Hom}

\DeclareMathOperator{\Map}{Map}
\DeclareMathOperator{\Mor}{Mor}

\DeclareMathOperator{\Spec}{\textup{Spec}}

\DeclareMathOperator{\Sym}{Sym}

\DeclareMathOperator*{\colim}{colim}

\renewcommand{\Pr}{\mathcal P\textup{r}}

\newcommand{\GL}{{\textup{GL}}}

\newcommand{\xt}{{\mathbb X_\bullet(T)^+}}

\newcommand{\Inv}{\textup{Inv}}

\newcommand{\hrk}{\hck_{\Ran,k}}

\newcommand{\fact}{{\textup{fact}}}
\newcommand{\hf}{\hck^{\fact}}
\newcommand{\hfb}{{\hck^\fact_{\bullet}}}
\newcommand{\Ind}{{\textup{Ind}}}
\newcommand{\oD}{{\mathring D}}

\newcommand{\pb}{\textsf{hcb}}
\newcommand{\lft}{\textup{lft}}

\renewcommand{\triv}{\xrightarrow{\sim}}

\newcommand{\pushcons}{\dashv}
\newcommand{\Prlr}{\mathcal{P}\textup{r}^{\textup{LR}}}
\newcommand{\Prlro}{\mathcal{P}\textup{r}^{\textup{LR}, \otimes}}

\newcommand{\oo}{\mathcal O}

\newcommand{\ex}{\textup{ex}}

\newcommand{\nn}{\mathbb N}

\newcommand{\taylor}{\llbracket t \rrbracket}
\newcommand{\laurent}{(\!(\!t\!)\!)}

\newcommand{\fs}{\textup{Fin}_{\geq 1,\textup{surj}}}
\newcommand{\smooth}{\textsf{smooth}}

\tikzset{
	closed/.style = {decoration = {markings, mark = at position 0.5 with { \node[transform shape, xscale = .8, yscale=.4] {$\backslash$}; } }, postaction = {decorate} },
	open/.style = {decoration = {markings, mark = at position 0.5 with { \node[transform shape, scale = .7] {$\circ$}; } }, postaction = {decorate} }
}

\newcommand{\A}{\mathbb A}

\newcommand{\tC}{\textup{C}}
\newcommand{\tloc}{\textup{loc}}

\newcommand{\TStk}{\textup{TStk}}
\newcommand{\Strat}{\textup{StrTop}}

\newcommand{\PStrTStk}{\widehat{\Str\TStk}}
\newcommand{\PStrat}{\widehat{\Str\TStk}}

\newcommand{\PConproeq}{\widehat{\Str\TStk_{\con,\textup{proeq}}}}

\newcommand{\StrStklft}{{\Str\Stk^\lft_\C}}
\newcommand{\StrTStk}{{\Str\TStk}}

\newcommand{\PStrStk}{\widehat{\Str\Stk_\C^\lft}}
\newcommand{\ConStk}{\Str\TStk_\con}
\newcommand{\PCon}{\widehat{\Str\TStk_\con}}
\newcommand{\Con}{\Str\Top_\con}
\newcommand{\subm}{\textsf{subm}}
\newcommand{\psubm}{{\widehat{\textsf{subm}}}}

\newcommand{\Pro}{\textup{Pro}}
\newcommand{\equ}{\textup{eq}}

\newcommand{\lCatE}{\widehat{\cat}_{\infty,\cE}}

\newcommand{\Perf}{\textup{Perf}}
\newcommand{\cont}{\textup{cont}}
\newcommand{\st}{\textup{st}}
\newcommand{\Dmod}{\textup{DMod}}
\newcommand{\PrLst}{\Prl_\st}
\newcommand{\Catex}{\Cat^\ex}

\newcommand{\tors}{\textup{tors}}
\newcommand{\Ac}{\mathbb A_{\mathbb C}^1}
\newcommand{\MDisk}{\textup{MDisk}}
\newcommand{\tri}{\mathsf{tri}}
\newcommand{\uni}{\mathsf{uni}}

\newcommand{\pshe}{\widehat{\she}}
\newcommand{\can}{\textup{can}}

\newcommand{\Loc}{\textup{Loc}}

\title{A model for the $\E_3$ fusion-convolution product of constructible sheaves on the affine Grassmannian}

\date{\today}
\author{Guglielmo Nocera\footnote{Institut des Hautes \'Etudes Scientifiques, 35 Rte de Chartres, 91440 Bures-sur-Yvette, France. \texttt{nocera-at-ihes.fr}.}}

\begin{document}
	
	\maketitle
	\begin{abstract}
		Let $G$ be a complex reductive group. The spherical Hecke category of $G$ can be presented as the category of $G_\cO$-equivariant constructible sheaves on the affine Grassmannian $\Gr_G$. This category admits a convolution product, extending the convolution product of equivariant perverse sheaves. 
		In this paper, we upgrade the mentioned convolution product to a left t-exact $\E_3$-monoidal structure in $\infty$-categories. 
		The construction is intrinsic to the automorphic side.
		Our main tools are the Beilinson--Drinfeld Grassmannian, Lurie's characterization of $\E_k$-algebras via the topological Ran space, 
		the homotopy theory of stratified spaces and the formalism of correspondences.
	\end{abstract}

	\tableofcontents

	\epigraph{[...] Non c'eravamo accorti \\ di un buco tra i rampicanti.}{\textsc{E. Montale}, A pianterreno, \textit{Satura II}}
	
	\section{Introduction}
	\subsection{Main results}
	Let $G$ be a complex reductive group, and $R$ a commutative ring of coefficients. The aim of this paper is to provide an extension of the convolution product in the Satake category of equivariant perverse sheaves on the affine Grassmannian (\cite[§4]{MV}) to the spherical Hecke category (\cite[§12.2.1]{Arinkin-Gaitsgory}), and endow this extension with an $\E_3$-algebra structure in $\infty$-categories. This upgrade is the derived avatar of Mirkovic and Vilonen's commutativity constraint \cite[§5]{MV}. We briefly illustrate the main results of the paper below.
	
	\begin{defin}[Affine Grassmannian]\label{affine-Grassmannian}Let $G$ be a reductive group. The \textit{arc group} $G_\cO$ (also denoted by $G\taylor$ or $\textup{L}^+G$) is defined as the functor 
		\begin{gather*}{\Aff_\C^\op}\to \Grp \\
			R\mapsto G(R\taylor)=\Hom(R\llbracket t \rrbracket,G).
		\end{gather*} The \textit{loop group} $G_\cK$ (also denoted by $G\laurent$ or $\textup{L}G$) is defined as the ind-representable functor 
		\begin{gather*}\Aff_\C^\op\to \Grp\\
			R\mapsto G(R\laurent)=\Hom(R\laurent,G).
		\end{gather*} 
		The \textit{affine Grassmannian} $\Gr_G$ is the fpqc quotient stack $$\Gr_G=\big [G_\cK/G_\cO\big ].$$\end{defin}

	When there is no ambiguity, we usually denote $\Gr_G$ by just $\Gr$.
	\begin{rem}The stack $\Gr$ is actually ind-representable. As such, it admits an underlying complex-analytic space $\Gr^\an$.\end{rem}

	\begin{notation}
		Let $\cE$ be a symmetric monoidal presentable stable $\infty$-category, and $\cE^\omega$ its stable subcategory of compact objects. Let $\Prlo_\cE$ be the symmetric monoidal $\infty$-category of $\cE$-linear presentable $\infty$-categories and left adjoint functors between them; 
	 Let also $\cat_{\infty, \cE^\omega}^\times$ be the $\infty$-category of $\cE^\omega$-linear small $\infty$-categories (see \cref{small-linear-categories}) with its cartesian symmetric monoidal structure. 

	When $R$ is a commutative ring and $\cE=\Mod_R$ is the $\infty$-category of $R$-modules, we use the notations $\Prlo_R,
	\cat_{\infty,R}$.
		\end{notation}

		\begin{thm}[Main result, {\cref{final-theorem}}]\label{final-theorem-in-introduction}Let $G$ be a complex reductive group and $\cE$ a symmetric monoidal presentable stable $\infty$-category. Then there exists an object $$\Sph(G;\cE)^{\otimes}\in \Alg_{\E_3}(\Prlo_\cE)$$ whose underlying $\infty$-category is $$\Cons_{G_\cO^\an}(\Gr^\an;\cE),$$ the unbounded derived $\infty$-category of topological $G_\cO^\an$-equivariant constructible sheaves over $\Gr^\an$, with coefficients in $\cE$.\end{thm}
	
	\begin{cor}[{\cref{corollary-small-subcategory}, \cref{t-exactness-of-convolution}}]\label{corollary-small-subcategory-introduction}In the same setting as \cref{final-theorem-in-introduction}, there exists an object $$\Sph(G;\cE)^{\textup{loc.c},\otimes}\in \Alg_{\E_3}(\cat_{\infty,\cE^\omega}^\times)$$ whose underlying $\infty$-category is the small spherical Hecke category of $G$ (see \cref{spherical-categories}).
		
	Let $R$ be a discrete ring. For $\cE=\Mod_R$, this $\E_3$-structure is left t-exact for the perverse t-structure (and exact if $R$ is a field). It canonically induces a symmetric monoidal structure on the abelian subcategory of equivariant perverse sheaves, coinciding with the classical convolution product of \cite[§4]{MV}.\end{cor}


\begin{rem}\label{remark-coefficients-intro}Let $R$ be a ring. As explained in \cref{Section-models-Sph}, the category $\Sph(G;R)^\locc=\Sph(G;\Mod_R^\omega)$ has several presentations. 

One is the one that we use as definition, namely the $\infty$-category of $G_\cO^\an$-equivariant constructible sheaves over $\Gr^\an_G$ with values in $R$, with bounded finitely presented stalks. 

When $R=\C$, then the Riemann-Hilbert correspondence implies that $\Sph(G;R)^\locc$ can be presented as the subcategory of $\Dmod_{G_\cO}(\Gr_G)$ spanned by objects whose underlying D-module is compact (i.e. which become compact after forgetting the equivariant structure), which agrees with \cite[12.2.3]{Arinkin-Gaitsgory}, see \cref{comparison-dmodules}. 

If $R$ is finite, profinite or $\ell$-adic (i.e. an algebraic extension of $\Q_\ell$), then $\Sph(G;R)^\locc$ can be presented as a category of equivariant \textit{\'etale} sheaves over the ind-scheme $\Gr_G$, see \cref{comparison-with-algebraic-l-adic}.\end{rem}

	Another direct corollary of \cref{final-theorem-in-introduction} regards the renormalized spherical Hecke category:
	\begin{cor}[\cref{corollary-Ind-completion}]\label{corollary-Ind-completion-introduction}
		In the same setting as \cref{corollary-small-subcategory-introduction}, there is also an object $$\Sph(G;\cE)^{\ren,\otimes}\in \Alg_{\E_3}(\Prlo_\cE)$$ whose underlying $\infty$-category is the renormalized spherical $\infty$-category $$\Sph(G;\cE)^\ren=\Ind(\Sph(G;\cE)^\textup{loc.c})$$ appearing e.g. in \cite[§12]{Arinkin-Gaitsgory}.
	\end{cor}
	
When $\cE=\Mod_k$ for $k$ a ring, we denote $\Sph(G;\cE)^\ren$ by $\Sph(G;k)^\ren$.
	We are now going to provide some context and motivation for these results (\cref{Introduction-Geometric-Langlands}) and an outline of the paper (\cref{Introduction-sketch}).

	\subsection{Motivation: the Derived Satake Theorem}\label{Introduction-Geometric-Langlands}
	A classical problem in representation theory is the study of a reductive group $G$ over a local field (e.g. $\textup{GL}_n$, $\textup{SL}_n$, $\mathbb P\textup{GL}_n$) and its Langlands dual $\check G$ (e.g. $\check{\textup{GL}}_n=\textup{GL}_n,\check {\textup{SL}}_n=\mathbb P\textup{GL}_n$). 
	
	A celebrated result in the study of the Langlands duality is the Satake theorem \cite{Satake} which, given a reductive group $G$ over $\mathbb F_p$, establishes an isomorphism between the $\C$-algebra of complex compactly supported $G(\mathbb Z_p)$-biinvariant functions on $G(\mathbb Q_p)$, called the \textit{(spherical) Hecke algebra} of $G$, and the (complexified) Grothendieck ring of finite-dimensional representations of $\check G$. Ginzburg \cite{Ginzburg} and later Mirkovic and Vilonen \cite[(13.1)]{MV} provided a ``sheaf theoretic'' analogue (actually a categorification) of this theorem, called the \textit{Geometric Satake Equivalence}: here $G$ is a \textit{complex} reductive group, and the statement has the form of an equivalence of symmetric monoidal abelian categories between the category of \textit{equivariant perverse sheaves} $\Perv_{G_\cO}(\Gr_G)$ and the category of finite dimensional representations of $\check G$. The key new object here is the affine Grassmannian $\Gr_G$, whose definition we recalled above (\cref{affine-Grassmannian}). This is an infinite dimensional algebro-geometric object with the property that $\Gr_G(\C)=G(\C(\!(t)\!))/G(\C\llbracket t\rrbracket)$. The Grothendieck group of $\Perv_{G_\cO}(\Gr_G)$ is the analogue of the Hecke algebra appearing in the original Satake theorem, but now for a reductive group over $\C$.\footnote{A closer analogue to the original Satake isomorphism is given by the geometric Satake theorem in mixed characteristic, see \cite{Zhu-mixed}.}

	\begin{thm}[Geometric Satake Equivalence, {\cite[(1.1)]{MV}}]\label{Geometric-Satake}Fix a reductive algebraic group $G$ over $\C$, and a discrete commutative ring $R$, noetherian and of finite global dimension. There exists a symmetric monoidal structure $\star$ on $\Perv_{G_\cO}(\Gr_G;R)$, called \textup{convolution}, and an equivalence of symmetric monoidal abelian categories \begin{equation}\label{geometric-Satake-introduction}({\Perv}_{G_\mathcal O}(\Gr_G;R),\star)\simeq (\Rep^{\mathrm{fd}}(\check G_R,R),\otimes)\end{equation} where $\check G_R$ is the Langlands dual of $G$ over $R$ \cite[Beginning of Sec.\ 12]{MV} and $\otimes$  denotes the standard tensor product of finite-dimensional $\check G_R$-representations with coefficients in $R$.
	\end{thm}
	We recall the meaning of this statement, together with various theoretical recollections necessary for this paper, in Appendix \ref{appendix-Geometric-Satake}. We refer the reader seeking for a complete survey to \cite{Zhu} and \cite{Baumann-Riche}.

	\begin{thm}[Derived Satake Theorem, {\cite[Theorem 5]{Bezrukavnikov-Finkelberg}}]\label{Derived-Satake}Let $G$ be a complex reductive group and $k$ a field of coefficients of characteristic zero. There is a \textup{monoidal} equivalence of triangulated categories\footnote{For the sake of coherence with the rest of the work, we adopt the notation $\textup{h}-$ in order to refer to ``the homotopy category of a stable $\infty$-category''. Of course, in the original paper both sides are defined directly as triangulated categories.} \begin{equation}\label{equation-derived-Satake}\textup{h}\Cons_{G_\cO}^{\textup{fd}}(\Gr_G;k)\simeq \textup{h}\Perf_{\check G_k}(\textup{Sym}(\check{ \mathfrak g}_k[-2]))\end{equation} where $\check G_k$ is the Langlands dual of $G$ over $k$ and $\check{\mathfrak g}_k$ is its Lie algebra.\end{thm}

	\begin{rem}\label{heart-of-derived-Satake}	Here $\Cons^\fd_{G_\cO}(\Gr;k)$ is the bounded derived $\infty$-category of $G_\cO$-equivariant constructible sheaves over $\Gr$ with coefficients in $k$ and finitely presentable stalks. The category $\Perv_{G_\cO}(\Gr_G,k)$ is the heart of a t-structure on $\Cons_{G_\cO}^{\textup{fd}}(\Gr_G,k)$ (and hence on the homotopy category). Indeed, this t-stucture is inherited from the presentation of the equivariant constructible category \`a la Bernstein-Lunts, see \cref{t-exactness-of-convolution}. As explained in \cref{t-exactness-of-convolution}, the Geometric Satake Theorem can be formally recovered from the Derived Satake Theorem by passing to the heart, up to a detail: a priori, the induced statement will only be a monoidal equivalence of monoidal abelian categories, and not a symmetric monoidal equivalence. 
		
		Both the left and right-hand side, as triangulated categories, carry a symmetric monoidal structure (for the left-hand-side, see \cite[Section 3.3]{Achar-Riche}; on the right-hand-side, it is the tensor product described in \cite[2.7]{Bezrukavnikov-Finkelberg}). However, the equivalence is \textit{not} symmetric (or even braided) monoidal (cf. \cite[Remark 12.4.3]{Arinkin-Gaitsgory}).\end{rem}
	
	In the following, the notion of $\E_n$-center of an $\E_n$-$\infty$-category we are referring to is \cite[Definition 5.3.1.6, Example 5.3.1.13]{HA}, and generalizes the notion of Drinfeld center.
	
	\begin{prop}\label{center-of-rep} Under the hypotheses of \cref{Derived-Satake}, there is a monoidal equivalence of $\infty$-categories  \begin{equation}\label{equation:center-of-rep}\QCoh(\Spec(\textup{Sym}(\check{ \mathfrak g}_k[1]))/\check G_k)\simeq\textup{Z}_{\E_2}(\textup{DRep}(\check G; k))\end{equation} where $\textup{Z}_{\E_2}$ stays for ``$\E_2$-center'' and $\textup{DRep}(\check G; k)$ is the derived $\infty$-category of representations seen as an $\E_2$-$\infty$-category by forgetting its $\E_\infty$-monoidal structure $\otimes$ along the map of operads $\E_2\to\E_\infty$.
	\end{prop}

	One can make this result follow from work of Ben-Zvi, Francis, Nadler and Preygel on centers \cite{BZFN}, \cite{BenZvi-Nadler-Preygel}: see \cite[§1.3]{Campbell-Raskin-paper} and \cite[(17.1.2)]{Relative-Langlands}.

	\begin{recall}Note that the left-hand-side of \eqref{equation:center-of-rep} is related to $\Perf_{\check G_k}(\textup{Sym}(\check{ \mathfrak g}_k[-2]))$, i.e. the $\infty$-category whose homotopy category appears in the right-hand-side of \cref{Derived-Satake}. More precisely, by {\cite[Proposition 12.4.2]{Arinkin-Gaitsgory}}, there is an equivalence $$\Ind\Coh(\Spec(\textup{Sym}(\check{ \mathfrak g}_k[1]))/\check G_k)\simeq \Mod_{\check G_k}(\textup{Sym}(\check{ \mathfrak g}_k[-2])).$$
	The right-hand-side of this equivalence is the Ind-completion of the right-hand-side of \eqref{equation-derived-Satake}, whereas the left-hand-side can be thought of as a renormalization of the left-hand-side of \eqref{equation:center-of-rep}. 

	In recent work appeared during the revision of the present paper, Campbell and Raskin proved the following result \cite[Theorem 6.6.1]{Campbell-Raskin-paper}. Assume $k=\C$ (or more generally, that $G$ is a reductive group over a field $k$ of characteristic zero). Then, with the notation of \cref{corollary-Ind-completion-introduction}, then there is an equivalence of $\infty$-categories \begin{equation}\label{CR-Derived-Satake}\Sph(G;k)^\ren\simeq\Ind\Coh(\Spec(\Sym(\check{\mathfrak g}_k[1]))/\check G_k)\end{equation}
		which lifts to an equivalence of \textit{factorizable monoidal categories}.
		 
		  This is the correct statement of a result announced by Gaitsgory and Lurie several years ago, and originally conjectured by Drinfeld (see e.g. \cite[footnote 19]{Arinkin-Gaitsgory}). In particular, it is an $\infty$-categorical enhancement of \cref{Derived-Satake}, which can be deduced from \eqref{CR-Derived-Satake} by passing to compact objects and taking the homotopy category.
\end{recall}

\begin{rem}[Role of the present paper]\label{rem-what-we-do-intro}
	The right-hand side of \eqref{CR-Derived-Satake} has a natural $\E_3$-monoidal structure coming from the fact that it is a renormalization of the $\E_2$-center of $\textup{DRep}(\check G_k;k)$, as per \cref{center-of-rep}.
	
	What we do in this paper is rather to endow the left-hand-side of \eqref{CR-Derived-Satake} with an $\E_3$-monoidal structure: see \cref{corollary-Ind-completion-introduction}. Our construction is intrinsic to the automorphic side, i.e. it does not use \eqref{CR-Derived-Satake}. In contrast to \cite{Campbell-Raskin-paper}, we work with a $G$ is defined over $\C$ (and not over an arbitrary field of characteristic zero): the reason for this will be evident from \cref{remark-E3-factmon-intro}. However, this also gives us freedom in the choice of coefficients, see \cref{remark-coefficients-intro}.
	\end{rem}
	
	\begin{rem}\label{remark-E3-factmon-intro}Note that an $\E_3$-monoidal structure is a slightly stronger notion than being factorizable monoidal. 

	More precisely, by the Dunn--Lurie Additivity Theorem \cite[Theorem 5.1.2.2]{HA} an $\E_3$-monoidal structure decomposes into an $\E_1$-monoidal and an $\E_2$-monoidal structure on the same $\infty$-category, which distribute with one another (a higher avatar of the Eckmann-Hilton principle). An $\E_1$-monoidal structure is just a monoidal structure. An $\E_2$-structure is the same as a braided monoidal structure. 

	In the expression ``factorizable monoidal'', the ``monoidal'' part corresponds to the $\E_1$-monoidal structure mentioned above. The ``factorizable'' part is related to the mentioned $\E_2$-monoidal structure as follows: the existence of an $\E_2$-monoidal structure implies the existence of a structure of factorizable category, but not vice-versa: the gap lies precisely in a notion of ``local constancy'': a ``locally constant'' factorizable structure induces an $\E_2$-monoidal structure. Formally, this is exactly the constructibility property appearing in \cref{recall-factalg-intro} below.
	\end{rem}

\begin{rem}\label{Tannaka}Our result is somehow in the same spirit of the Tannakian reconstruction principle used in the proof of the Geometric Satake Theorem (\cref{Geometric-Satake}), where the existence of a symmetric monoidal structure on $\Perv_{G_\cO}(\Gr;R)$ is a part of the structure needed to apply the reconstruction machinery, and only a posteriori it is interpreted as corresponding to the tensor product in $\Rep^\fd(\check G_R;R)$. 
\end{rem}
	
\begin{rem}
	In light of \cref{rem-what-we-do-intro}, it is natural to expect an $\E_3$-monoidal equivalence between the two sides of \eqref{CR-Derived-Satake}, refining the factorizable monoidal equivalence proved by Campbell and Raskin.
\end{rem}

\begin{note}A reasonable question is whether the $\E_2$-component of our $\E_3$-structure is related to some instance of nearby cycles: this question can be thought of as a lift of the discussions in \cite[§3, in particular Proposition 3.3.7]{Achar-Riche} from perverse to constructible sheaves. This is the subject of work in progress with Marius Kj\ae rsgaard, Dmitry Kubrak and Qixiang Wang.\end{note}
	
	\subsection{Outline of the work}\label{Introduction-sketch}
	Let $G$ be complex reductive group. Recall \cref{affine-Grassmannian}.

	In \textit{\cref{Section-2}} we recall that, for any choice of a smooth complex curve $X$, there exists a presheaf $\Ran(X)$, defined as the colimit in $\PSh(\Sch_\C)$ of the diagram \begin{gather*}\fs^\op\to \Ind\Sch_\C\\
	I\mapsto X^I\end{gather*}
where $\fs$ is the category of nonempty finite sets with surjections between them, and the diagram sends a surjection $I\twoheadrightarrow J$ to the corresponding diagonal $X^J\to X^I$.

We also recall that there exists a presheaf $\Gr_\Ran$ called the Ran Grassmannian, living over $\Ran(X)$ and such that for any choice of $x_0\in X(\C)$ the singleton map $\{x_0\}:\Spec \C\to \Ran(X)$ induces a pullback square \begin{equation}\label{pullback-of-GrBD-to-point-intro}\begin{tikzcd}\Gr\arrow[r]\arrow[d]&\Gr_\Ran\arrow[d]\\
\Spec \C\arrow[r, "\{x_0\}"]&\Ran(X)\end{tikzcd}.\end{equation}
Such a presheaf arises as the colimit in $I\in \fs^\op$ of the \textit{Beilinson-Drinfeld Grassmannians}
 \begin{equation}\label{BD-Grassmannian-intro}
	\Gr_I=\{x_I\in X^I,\cF\in \Bun(X),\alpha:\cF|_{X\setminus x_I}\triv\cT|_{X\setminus x_I}\}.
\end{equation} Here $\cT$ is the trivial $G$-bundle, see \cref{defin-GrBD}. The so-called ``moduli interpretation'' of the affine Grassmannian (\cref{moduli-interpretation}) implies the existence of the pullback diagram \eqref{pullback-of-GrBD-to-point-intro}.

The point of view involving $\Ran(X)$ is already used for instance in \cite{Zhu}, \cite{Gaitsgory-Tamagawa} and \cite{James-GrRan}. We consider an equivariant version of this phenomenon: first of all, recall that $\Gr$ admits an action of $G_\cO$ by left multiplication (see \cref{perverse-and-stratification}). We define $$\hck$$ as an ind-pro-stack whose realization is the fpqc quotient $$[G_\cO\backslash \Gr],$$ see \cref{Hecke-stack}, \cref{Hck-as-ind-pro}. Just like in the case of $\Gr_\Ran$ in \eqref{pullback-of-GrBD-to-point-intro}, there is an object $\hck_\Ran$ fitting in a pullback square 
\begin{equation}\label{pullback-of-hck-to-point-intro}\begin{tikzcd}\hck\arrow[d]\arrow[r]&\hck_\Ran\arrow[d]\\
\Spec\C\arrow[r,"\{x_0\}"]&\Ran(X)\end{tikzcd}.\end{equation}
Here $\hck_\Ran$ is an object of the category $\PStrStk$, a suitable pro-completion and free cocompletion of the category of stratified stacks (see \cref{stratified-prestacks}): this is the right environment for our constructions since we want to keep track of the fact that our objects can be approximated by finite-dimensional objects at various levels. In particular, this will allow later to define categories of constructible sheaves in the right way (e.g. as a colimit along the coweight filtration on the affine Grassmannian). 

All objects in sight have natural stratifications, ultimately coming from the fact that the classical stratification in Schubert cells of the affine Grassmannian (\cref{perverse-and-stratification}) can be extended to the Beilinson-Drinfeld Grassmannian. Stratifications are crucial in this work because of the stratified-homotopy-invariance features enjoyed by the procedure of taking constructible sheaves with respect to a given stratification (as opposed to \textit{some} stratification, cf. \cref{varying-strat}, \cref{topological-dc} for the distinction). For this reason, we always work with \textit{stratified} stacks and variations thereof.

There exists a span \begin{equation}\label{convolution-intro}\begin{tikzcd}
	&\hck_2\arrow[ld,"\overline p"']\arrow[rd,"\overline m"]&\\
	\hck\times \hck&&\hck
\end{tikzcd}\end{equation} which we call ``convolution diagram''. 
This span admits a ``Ran version'' of the form
 \begin{equation}\label{Ran-convolution-intro}\begin{tikzcd}
 		&\hck_{\Ran,2}\arrow[ld,"\overline p_\Ran"']\arrow[rd,"\overline m_\Ran"]&\\
 		\hck_\Ran\times \hck_\Ran&&\hck_\Ran
 	\end{tikzcd}.\end{equation}
 The main reason for the existence of this Ran version of the convolution diagram is the fact that the so-called \textit{convolution Grassmannian} admits a Ran version (see \cref{convolution-grassmannians}), allowing to define the upper vertex of this diagram. 
From this, we prove that $\hck_\Ran$ carries a nonunital $\E_1$-algebra structure in correspondences, i.e. there exists an object \begin{equation}\label{algebra-structure-in correspondences-intro}\hck^\otimes_\Ran\in\Alg_{\E_1}^\nun(\Corr(\PStrStk)^\times)\end{equation} whose underlying object is $\hck_\Ran$. The target is the $1$-category of correspondences on $\PStrStk$ in the sense of \cite{GRI}, \cite{Mann}, together with the monoidal structure induced by the Cartesian monoidal structure on $\PStrStk$. Associativity ($\E_1$) here is a consequence of the existence of $n$-fold convolution Grassmannians, see \cref{convolution-grassmannians}.

The reason we are interested in extending \eqref{convolution-intro} to \eqref{Ran-convolution-intro} is the following. Informally, push-and-pull of perverse (or, in our case, constructible) sheaves along \eqref{convolution-intro} induces the convolution product of \cite[§4]{MV}, which corresponds to an $\E_1$-algebra structure on the chosen category of sheaves over $\hck$. As we will see, the existence of the Ran version \eqref{Ran-convolution-intro} allows to add an additional ``$\E_2$-direction'', corresponding to the commutativity constraint appearing in \cite[§5]{MV} (which itself uses the existence of the Beilinson-Drinfeld Grassmannian). However, in order to carry out the latter step, we choose to pass to the complex-analytic world: this allows to use Lurie's characterization of factorization algebras \cite[Theorem 5.5.4.10]{HA}. Note that this latter ingredient is absent in \cite{MV}, where the properties of perverse sheaves allow to establish the commutativity constraint ``on the nose''.

More precisely, in §\ref{Section-3.1} we apply the stratified analytification functor $(-)^\an$ of \cref{big-stratified-analytification} to the objects constructed in the previous section, with the goal of deducing the existence of an $\E_3$-algebra structure on the category of \textit{topological} constructible sheaves over the resulting complex-analytic objects.\footnote{``Topological'' here is to be understood as opposed to ``algebraic''. It is actually an interesting question whether an $\E_3$-structure can be established on a category of constructible \'etale sheaves over $\hck$ without using the theory of topological factorization algebras. One should however keep in mind that this would only make sense for finite or $\ell$-adic coefficients, since we would be looking at \'etale sheaves. In this case, the category of algebraic constructible sheaves on $\hck$ and the category of topological constructible sheaves on $\hck^\an$ coincide (see \eqref{Grassmannian-GAGA}), so it is really a matter of techniques used, not of the result. For other coefficients, the topological model is less replaceable: in the case of complex coefficients, for instance, one looks at $\Cons(\hck^\an;\C)$, which corresponds to $\Dmod(\hck)$, see \cref{remark-coefficients-intro}. 
	
	This point of view is also underlined in \cite[end of page 2]{MV}.} The functor appearing in \cref{big-stratified-analytification} is an upgraded version of Raynaud's original analytification functor \cite{SGA1-XII}, which takes into account stratifications and the formation of pro-objects and free colimits. In particular, the analytification of the object $$\hck_{\Ran}\in \PStrStk$$ belongs to the category $\PStrat$, which arises in a totally similar way to $\PStrStk$ as a pro-completion and free cocompletion of the category of \textit{topological} stratified stacks, see \cref{stratified-topological-prestacks}.

	The algebra structure in correspondences \eqref{algebra-structure-in correspondences-intro} is transferred via this procedure to an object \begin{equation}\label{topological-algebra-structure-in-correspondences-intro}\hck^{\an, \otimes}_\Ran\in \Alg_{\E_1}^\nun(\Corr(\PStrat)).
	\end{equation}
	Additionally, we are able to build a factorization algebra structure on $\hck^\an_\Ran$, in the sense that there is a  map of operads (\cref{hck-fact})
	
	\begin{equation}\label{fact-alg-intro}\hck^\fact:\textup{Fact}(\rr^2)\to\Alg_{\E_1}^\nun(\Corr(\PStrat))^\times.\end{equation}
	 \begin{recall}\label{recall-factalg-intro}Here $\textup{Fact}(\rr^2)$ is a certain operad whose algebras correspond include nonunital $\E_2$-algebras under Lurie's criterion \cite[Theorem 5.5.4.10]{HA}; the needed conditions in order to obtain a nonunital $\E_2$-algebra are essentially three (see \cite[Theorem 5.5.4.10]{HA} for the meaning of the words in italic):
	\begin{itemize}
		\item \textit{factorizability}, corresponding to the factorization property of the Beilinson-Drinfeld Grassmannian \cref{factorization-property-BD};
		\item \textit{constructibility} up to stratified homotopy, corresponding to the fact that the analytification of $\Gr_{\Ran}$ is homotopy invariant under dilation of coordinates of $\Ac$ (\cite{WM});
		\item codescent with respect to the euclidean topology of $\rr^2$ (i.e. wrt the complex-analytic topology on $\Ac$).  This condition is ``almost'' satisfied: the defect is due to the presence of pro-objects in the story, an issue which is completely solved after taking constructible sheaves: see \cref{not-cosheaf}.
	\end{itemize}
\end{recall}
	
	Let now $\cE$ be a presentable stable stable $\infty$-category, which will be our category of coefficients. We want to give a meaning to the expression $$\Cons(\hck_\Ran^\an;\cE).$$ The idea is to define this by colimits and limits from the finite-dimensional terms involved in the construction of $\hck_\Ran$. To this end, we ideally would like to build a symmetric monoidal functor $$\Cons(-;\cE):\Corr(\PStrat)\to \widehat\Cat,$$ the target being the category of large categories. Such a functor would transfer all the desired properties in one go. However, the functorialities needed to build such a functor are somehow only understood for a strict subcategory of $\PStrat$, built out of what are known as \textit{conically} stratified spaces (\cref{defin-con}): these are spaces with some mild topological conditions and the crucial requirement that the stratification satisfies a certain equisingularity condition (called the conical stratification condition). Whitney stratifications are a standard example of such, and indeed we use the fact that the analytification of the Beilinson-Drinfeld Grassmannian  \eqref{BD-Grassmannian-intro} is Whitney (due to David Nadler in his PhD thesis) to prove that it is conical.
	
	This construction is performed in Appendix \ref{appendix-stratified-spaces}. More precisely, there is a functor \begin{equation}\label{Cons-in-intro}\begin{gathered}\Str\Top_\con\to \Prl_\cE\\(X,s)\mapsto \Cons(X,s;\cE)\end{gathered}\end{equation}
	 where $\Str\Top_\con$ is the category of conically stratified spaces, $\Prl_\cE$ is the category of $\cE$-linear presentable stable categories, and $\Cons(X,s;\cE)$ is the category of $\cE$-valued constructible sheaves on $(X,s)$. This functor also carries a symmetric monoidal structure, see \cref{cons-monoidal}. The existence of \eqref{Cons-in-intro} relies on the formalism of exit paths as developed in \cite[Appendix A]{HA} and later in \cite{Exodromy-PT}.

	This functor can then be extended to $\PCon$, which is the subcategory of $\PStrat$ built out of $\Str\Top_\con$ instead of $\Str\Top$ (\cref{stratified-topological-prestacks}). The resulting functor further extends to a category of correspondences via the formalism of \cite[Part III]{GRI} and \cite{Mann}. Again, the category of correspondences appearing in the result is a strict subcategory of $\Corr(\PCon)$, in that it has less morphisms: we are looking at $$\Corr(\PCon)_{\all, \subm},$$ whose morphisms are spans $$\begin{tikzcd}&\cY\arrow[ld,"h"']\arrow[rd,"v"]&\\ \cX&&\cZ\end{tikzcd}$$
	 of morphisms in $\PCon$ where the arrow $h$ belongs to a certain class of ``smooth submersions'' (\cref{defin-subm-prestacks}).
	 This restriction is necessary in order to have the necessary base change properties (also known as Beck-Chevalley conditions) for the extension to correspondences. The final output is a symmetric monoidal functor $$\Corr(\PCon)_{\all,\subm}^\times\to \Prro_\cE,$$ see \cref{take-constructibles-final}. Here $\Prro_\cE$ is the symmetric monoidal $\infty$-category of $\cE$-linear presentable $\infty$-categories with right adjoint functors (\cref{Prro}).

	 In \cref{subsection-setup-constructibles} we prove that the analytification of $\hck_\Ran$ and its variations do belong to $\PCon$, and that the analytification of the left leg in \eqref{Ran-convolution-intro} belongs to $\subm$. This implies that the functor $\hck^\fact$ from \eqref{fact-alg-intro} factors via the subcategory $\Alg_{\E_1}^\nun(\Corr(\PCon)_{\all, \subm}^\times)$, as desired.

	In \textit{Section 4} we study the categories of sheaves $\Cons(\hck_\Ran;\cE)$ and $\Cons(\hck;\cE)$. By composing $\hck^\fact$ with $\Cons(-;\cE)$, we can apply Lurie's criterion \cite[Theorem 5.5.4.10]{HA} mentioned above (whose conditions are now completely satisfied) and obtain a map of operads $\E_2^\nun\to (\Alg_{\E_1}^\nun(\Prro_\cE))$ whose underlying category is $\Cons(\hck_\Ran^\an;\cE):$ see \cref{algebra-structure-on-Sph-Ran}.
	In other words, there are two nonunital monoidal structures on $\Cons(\hck_\Ran^\an;\cE)$, one of which is braided, which distribute one with the other.
	
	\cref{subsection-specialization} is devoted to transfer these two structures from $\Cons(\hck_\Ran^\an;\cE)$ to $\Cons(\hck^\an;\cE),$ which is done by specializing to any chosen point $x_0\in \A^1_\C$ (cf. \eqref{pullback-of-hck-to-point-intro}). This procedure amounts to some base change verifications, enabled by the fact that the right leg of the convolution diagram \eqref{Ran-convolution-intro} is ind-proper and the left leg is pro-smooth.

	We prove that, after this specialization, both the $\E_1^\nun$- and $\E_2^\nun$-structures gain units, and therefore the Dunn-Lurie Additivity Theorem \cite[Theorem 5.1.2.2]{HA} can be applied, thus combining the two algebra structures into an $\E_3$-structure on $\Cons(\hck^\an;\cE)$.

	The latter category is precisely $\Sph(G;\cE)$, i.e. the $\infty$-category of $G_\cO^\an$-equivariant constructible sheaves on $\Gr^\an$ with coefficients in $\cE$. We thus obtain \cref{final-theorem-in-introduction}. When $\cE=\Mod_R$ for a discrete ring $R$, this structure is left t-exact (t-exact if $R$ is a field) and therefore restricts canonically to a symmetric monoidal structure on perverse sheaves, which is the classical one used in \cite{MV}: see \cref{t-exactness-of-convolution}.

	\begin{rem}\label{complex-topology}
		
	Note that $\Gr^\an$ is homotopy equivalent to the loop space $\Omega G^\an\simeq\Omega^2\textup{B} (G^\an)$, which carries a standard $\E_2$-algebra structure in spaces. This is not sufficient to derive the $\E_2$-algebra structure on the spherical Hecke category though (at least not with our techniques), because the functor taking constructible sheaves is stratified homotopy invariant but not homotopy invariant (for example, constructible sheaves on $\mathbb R$ and on the point are not the same). 
	For recent developments on the loop space perspective, see \cite{Chen-Nadler-Quasi-Maps, Chen-Nadler-Real-Groups}, which also take stratifications into account.
	
		The application of Lurie's characterization of $\E_k$-algebras to the affine Grassmannian also appears in \cite{HY}, though in that paper the authors are interested in a purely topological problem and do not take constructible sheaves. To our knowledge, the formalism of constructible sheaves via exit paths and exodromy has never been applied to the study of the affine Grassmannian and the spherical Hecke category. 
	
	\end{rem}

	\subsection*{Acknowledgments}This paper constitutes a chapter of my thesis as a graduate student at Scuola Normale Superiore di Pisa and Universit\'e de Strasbourg (2018-2022) under the supervision of Mauro Porta and Gabriele Vezzosi.
	
	During the last phase of revision I was supported by the ERC Starting Grant \textit{Foundations of motivic real K-theory} (2020-2025) held by Yonatan Harpaz.
	
	I am indebted to Pramod Achar, Pierre Baumann, Katsuyuki Bando, Dario Beraldo, Justin Campbell, Robert Cass, Dustin Clausen, Ivan Di Liberti, Andrea Gagna, Dennis Gaitsgory, Jeremy Hahn, Yonatan Harpaz, Andreas Hayash, Hiroki Kato, Marius Kj\ae rsgaard, Vasily Krylov, Dmitry Kubrak, Jacob Lurie, Andrea Maffei, Lucas Mann, David Nadler, C\'edric P\'epin, Michele Pernice, Sam Raskin, Simon Riche, James Tao, Angelo Vistoli, Qixiang Wang and Allen Yuan for their suggestions and explanations. 
	
	I especially thank Peter Haine, Mark Macerato, Emanuele Pavia, Morena Porzio and Marco Volpe for the extensive and fruitful discussions carried out with them during various phases of the work, and Roman Bezrukavnikov for hosting me at MIT during April-May 2022.

	
	\section{Convolution over the Ran space}\label{Section-2}

	Throughout this whole work, $G$ will be a complex reductive group and $X$ a complex smooth curve.

	\begin{notation}\label{notation-no-R}
		When defining a presheaf over the category of complex affine schemes, we will sometimes drop the dependance on $\Spec R$ when it does not cause confusion. A point $x\in X(R)$ will just be denoted by $x\in X$, and its graph in $X\times \Spec R$ by $\Gamma_x$.
		
		Two $R$-points of $X$ will be declared ``equal'' if they coincide as maps $\Spec R\to X$, ``distinct'' if they do not coincide (but their graphs may intersect nontrivially inside $X_R$), and ``disjoint'' if their graphs do not intersect. 
		
	 Let $I\in \fs$ and $x_I\in X^I(R)$. Let $\textup{pr}_i:X^I\rightarrow X$ be the projection on the $i$-th coordinate and denote by $x_i$ the composite $\textup{pr}_i\circ x_I$. We denote by $\Gamma_{x_i}\subset X_R$ the closed subscheme given by the graph of $x_i$.
	We denote by $\Gamma_{x_I}$ the closed (possibly nonreduced) subscheme of $X_R$ corresponding to the composition 
	$$\Spec R\to X^I\to \textup{Sym}^{|I|}_X\simeq \textup{Hilb}^{|I|}_X$$ where the last isomorphisms comes from the fact that $X$ is a curve. This subscheme is supported at the union of the graphs $\Gamma_{x_i}$. For instance, if $R=\C$, $I=\{1,2\}$ and $x_1=x_2$ is a closed point of $X$, then $\Gamma_{x_I}$ is the only closed subscheme supported at the point and of length $2$.

		The definition of the affine and punctured formal neighbours of a closed subscheme $\Gamma$ of a scheme $S$, denoted by $\widetilde S_\Gamma$ and $\mathring{S}_\Gamma$ respectively, is recalled in \cref{affine-formal-completions}. When there is no risk of confusion about the ambient scheme, we will also denote them by $\widetilde \Gamma$ and $\mathring \Gamma$ respectively.

		A $G$-torsor $\cF\in \Bun(X_R)$ will just be denoted by $\cF\in \Bun(X)$, and the trivial $G$-torsor over a scheme $S$ will be denoted by $\cT_S$.
		
		Finally, the symbol $\PSh(-)$ denotes groupoid-valued presheaves, whereas $\cP(-)$ denotes space-valued presheaves.
	\end{notation}

	\subsection{The Beilinson--Drinfeld setting}

	The following definitions also appear, in various forms, in \cite{Reich}, \cite{Richarz} and \cite{Cass-BDGrassmannian}, and are natural generalizations of the characterizations recalled in \cref{moduli-interpretation}.
	
	\begin{defin}\label{first-definitions}Let $I,I_1,I_2$ be nonempty finite sets. We recall the following definitions. \begin{itemize}\item The Beilinson--Drinfeld arc group $$G_{\cO,I}=\{x_I\in X^I,g \in G(\widetilde X_{x_I})\}.$$ Note that $G(\widetilde X_{x_I})\simeq \Aut(\cT_{\widetilde X_{x_I}})$.
			
			\item The Beilinson--Drinfeld loop group
			\begin{gather*}G_{\cK, I}=\{x_I\in X^I,\cF\in \Bun(X), \alpha\textup{ trivialization of $\cF$ on }X\setminus x_{I},\mu\textup{ trivialization of $\cF$ on }\widetilde X_{x_I}\}\end{gather*}
			with its ``decoupled version''
			\begin{gather*}G_{\cK, I_1,I_2}=\{x_{I_1}\in X^{I_1},x_{I_2}\in X^{I_2}, \cF\in \Bun(X), \\ \alpha\textup{ trivialization of $\cF$ on }X\setminus x_{I_1},\mu\textup{ trivialization of $\cF$ on }\widetilde X_{x_{I_2}}\},\end{gather*}\end{itemize}\end{defin}

\begin{defin}\label{defin-GrBD}	The Beilinson--Drinfeld Grassmannian is defined as

			$$\Gr_{I}=\{x_I\in X^I, \cF\in \Bun(X), \alpha\textup{ trivialization of $\cF$ on }X\setminus x_I\}.$$	\end{defin}

	\begin{rem}\label{action-from-the-right}The objects $G_{\cK, I}, G_{\cK,I_1,I_2},\Gr_{I}$ are ind-schemes over $X^I$ by \cite[Theorem 3.1.3, Proposition 3.1.9 and variations thereof]{Zhu}. The object $G_{\cO,I}$ is representable \cite[Proposition 3.1.6]{Zhu}, and has the structure of an infinite-dimensional group scheme relative to $X^I$. The object $\Gr_I$ is often denoted by $\Gr_{G,X^I}, \Gr_{G,I},\Gr_{X^I}$. The notations $G_{\cO,X}, G_{\cK,X}, \Gr_{X}$, respectively for $G_{\cO,\{1\}}, G_{\cK,\{1\}}, \Gr_{\{1\}}$, are also common and we will often use them.

	The group scheme $G_{\cO,I}$ acts on $G_{\cK,I}$ relatively to $X^I$ by modification of $\mu$, and there is an equivalence 
	$$\Gr_I\simeq G_{\cK,I}/G_{\cO,I}$$ where the right-hand-side is the fpqc quotient relative to $X^I$.
	Analogously, there is an action of $G_{\cO,I_2}$ on $G_{\cK,I_1,I_2}$ relative to $X^{I_2}$, and the quotient is $\Gr_{I_1}\times X^{I_2}$.

		
		
	\end{rem}

	\begin{notation}\label{notation-fibers-at-point}
	We denote Let $x:\Spec \C\to X$ be a closed point. We adopt the notation $$\Gr_x=\Gr_X\times_{X,x}\Spec \C$$ and similarly $G_{\cK,x},G_{\cO,x}$.
	\end{notation}
	
	\begin{prop}[Translational invariance]\label{translational-invariance}Let $X=\A^1_\C$. Then any choice of a closed point $x\in \A^1_\C$ induces splittings $$\Gr_{\Ac}\simeq \Gr_x\times \A^1_\C$$
		$$G_{\cO,\Ac}\simeq G_{\cO,x}\times \A^1_\C.$$
	\end{prop}
	\begin{proof}
		The case of $\Gr$ is proven as follows. The definition of $\Gr_{X}$ is functorial in $X$, and hence the translation action on $\A^1_\C$ lifts to $\Gr_{\A^1}$ as a map $$\Gr_{\Ac}\times_\C\Ac\to \Gr_{\Ac}.$$ The choice of any point $x$ induces a map $$\Gr_x\times_\C \Ac\to \Gr_{\Ac}\times_\C\Ac\to\Gr_{\Ac}$$ which provides the splitting.

		The case of $G_\cO$ is straightforward from the definition.
	\end{proof}
	
	\begin{warning}
		Such splittings only hold for $I=\{1\}$.
	\end{warning}
	
	\begin{defin}We denote by $\fs$ the category of nonempty finite sets and surjections between them.\end{defin}
	
	\begin{rem}\label{action-L}Fix $I\in \fs$. 
	The group $G_{\cO,I}$ acts on $\Gr_{I}$ over $X^I$ as follows: $$(x_I,g).(x_I,\cF,\alpha):=(x_I,\cF,g|_{\mathring{X}_{x_I}}\circ\alpha|_{\mathring X_{x_I}}).$$ 
	This definition is well-posed thanks to the Beuville-Laszlo theorem (cf. \cref{rem:Grglob}), which implies that the datum of a trivialization on the punctured affine formal neighbourhood of a point is equivalent to one on the complement of the point. 
	We will implicitly use this argument in the rest of the paper while writing similar expressions.
		
	Suppose $I=I_1\sqcup I_2$. The relative group scheme $G_{\cO,I}$ acts on $G_{\cK,I_1,I_2}$ relatively over $X^I$ again by modification of $\alpha$. Note that $\alpha$ is a trivialization away from $x_{I_1}$, and we are modifying it at all points of $x_I$ (not just those of $x_{I_1}$).
	
	Let now $I_1,I_2,I_3$ be nonempty finite sets. The relative group scheme $G_{\cO,I_2}$ acts on $G_{\cK,I_1,I_2}\times_{X^{I_2}}G_{\cK, I_2,I_3}$ relative over $X^{I_2}$ by simultaneous modification of $\mu$ in the first component and $\alpha$ in the second one, respectively over $\widetilde{X}_{\Gamma_{x_{I_2}}}$ and $\mathring{X}_{\Gamma_{x_{I_2}}}$. The same relative group scheme acts on $G_{\cK,I_1,I_2}\times_{X^{I_2}}\Gr_{I_2}$ relatively over $X^{I_2}$ in a similar way.
	\end{rem}

	\begin{construction}\label{notation-partitions}Let $I,J\in\fs$ and $[\phi:I\twoheadrightarrow J]$ a $J$-partition of $I$, i.e. the equivalence class of a surjection $\phi:I\twoheadrightarrow J$ modulo autobijections of $J$. Following \cite[§4.2]{Nadler-Perverse-real} and \cite[(4.2)]{Cass-BDGrassmannian}, let $$X^\phi=\{x_I=(x_1,\dots,x_{|I|})\in X^I\mid \phi(i)=\phi(i')\Rightarrow x_i=x_{i'}, $$$$\phi(i)\neq \phi(i')\Rightarrow \Gamma_{x_i}\cap \Gamma_{x_{i'}}=\varnothing, i,i' \in I\}\subset X^I.$$
		
		This partition of $X^I$ forms a stratification which is called the \textit{incidence stratification} of $X^I$ (\cite[§4.2]{Nadler-Perverse-real}).
	\end{construction}

	\begin{prop}[Factorization property]\label{factorization-property-BD} With the above notation, there is an isomorphism 
	$$\Gr_{I}\times_{X^I}{X^\phi}\simeq \left(\prod_J\Gr_X\right)\times_{X^I} X^\phi$$ 
	where the map $\prod_J \Gr_X\to X^I$ is induced by the diagonal $X^J\to X^I$ associated to $\phi$. Note that the right-hand-side is also isomorphic to $\left(\prod_J\Gr_X\right)\times_{X^J} X^\id$, $\id$ being the partition induced by the identity of $J$ and $X^\id\subset X^J$ being the associated stratum, i.e. the open stratum of pairwise distinct coordinates in $X^J$.
	\end{prop}
	\begin{proof}
		See \cite[Proposition 4.2.1]{Nadler-Perverse-real} or \cite[Proposition 4.6]{Cass-BDGrassmannian} (which refers directly to \cite[Proposition 3.1.13]{Zhu}). To be precise, the proof in \cite{Zhu} is performed for $X=\A^1_\C$ (see \cref{factorization-for-A1}), but it is literally the same in the general case.
	\end{proof}
	
	\begin{prop}\label{factorization-GO}
		With the above notations, there is an isomorphism $$G_{\cO,I}\times_{X^I}X^\phi\simeq \left(\prod_JG_{\cO,X}\right)\times_{X^I}X^\phi$$ and the right-hand side is in turn isomorphic to $\prod_JG_{\cO,X}\times_{X^J}{X^\id}$ as above.
	\end{prop}
	\begin{proof}
		Straightforward from the definition.
	\end{proof}
	
	\begin{rem}
		Under the identifications of \cref{factorization-property-BD}, we note the following. Let $x\in X(\C)$. Then we can perform pullbacks along $\Spec \C\xhookrightarrow{(x,\dots,x)} X\hookrightarrow X^I$ and obtain isomorphisms
		\begin{gather*}\Spec \C\times_{X^I}\Gr_I\simeq\Gr\\
		\Spec \C\times_{X^I}G_{\cO,I}\simeq G_\cO\\
		\Spec \C\times_{X^I}G_{\cK,I}\simeq G_\cK\end{gather*}
		and the actions appearing in \cref{action-L} become the ones from \cref{perverse-and-stratification} and \cref{definition-of-twisted-product}.
	\end{rem}
	
	\begin{rem}\label{stratification-of-GrX-HckX}The stratification in Schubert cells of $\Gr$ (\cref{perverse-and-stratification}) naturally induces a stratification on $\Gr_X$ with the same stratifying poset $\xt$, as showed in \cite[(3.1.11)]{Zhu}, \cite[Recall A.14]{WM}. If $(x,\cF,\alpha)\in \Gr_X(\C)$, we denote by $$\Inv_x(\cF, \alpha)\in \xt$$ the associated coweight (we will often abbreviate this by $\Inv_x(\alpha)$). We also denote the stratum with coweight $\mu$ by $$\Gr_{X,\mu},$$ and set $$\Gr_{X,\leq \mu}=\bigcup_{\nu\leq \mu}\Gr_{X,\nu}.$$\end{rem}
	
	\begin{rem}The notion of $\Inv_x(\alpha)$ admits the following generalization. Let $x\in  X(\C),\cF_0,\cF_1\in \Bun(X)(\C)$, and let $\eta:\cF_1|_{X\setminus x}\simeq \cF_0|_{X\setminus x}$. Fix a trivialization $\lambda$ of $\cF_0$ on the formal neighbourhood of $x$. Such a trivialization always exists because all $G$-torsors are trivial on $\Spec \C\taylor$. Then one can compute $$\Inv_x(\lambda|_{\mathring X_x}\circ \eta|_{\mathring X_x})\in \xt$$ and check, by uniqueness of the Cartan decomposition, that this coweight is independent of the choice of $\lambda$. We denote it by $$\Inv_x(\eta).$$\end{rem}

	\begin{recall}There is a well-defined stratification on $\Gr_{I}$ whose explicit description is provided in \cite[§4.2]{Nadler-Perverse-real} or also \cite[Definition 4.18]{Cass-BDGrassmannian}. We recall it here, just to fix notations for the generalization to the convolution Grassmannian. The indexing poset of the stratification is $$\{[\phi:I\twoheadrightarrow J]\textup{ partition of }I, \mu_{J}=(\mu_1,\dots,\mu_{|J|})\in (\xt)^J\}$$ where the order relation is given by: $$(\phi, \mu_{J})\leq (\phi',\mu'_{J'})$$ if and only if there exists a refinement $[\psi:J'\twoheadrightarrow J]$ such that for every $h\in J$ $$\mu_h\leq \sum_{h'\in J',\psi(h')=h}\mu_{h'}'.$$
		The stratification is then defined by setting $$\Gr_{I,\phi,\mu_{J}}=\prod_{h\in J} \Gr_{X,\mu_h}\times_{X^J} X^\phi\hookrightarrow\Gr_{I}$$ where the embedding is induced by \cref{factorization-property-BD}.
		
		For $[\phi:I\twoheadrightarrow J]$ partition of $I$, $\mu_J\in(\xt)^J$, we consider the Zariski closure $$\overline{\Gr_{I,\phi,\mu_J}}.$$ By \cite[Lemma 4.20]{Cass-BDGrassmannian}, this is a union of strata.
		
		Let $(\phi,\mu_J)\leq (\phi',\mu_{J'}')$ as above. Then we have a natural closed embedding $$\overline{\Gr_{I,\phi,\mu_J}}\hookrightarrow \overline{\Gr_{I,\phi',\mu_{J'}'}}.$$	\end{recall}
		
		\begin{recall}\label{GrN}We recall the definition of the standard filtration of the Beilinson-Drinfeld Grassmannian (\cite[Theorem 3.1.2]{Zhu}, \cite[Lemma 3.4]{Richarz}).  Let $I\in \fs, n, N\in \nn$. Define $$\Gr^{(N)}_{\GL_n,I} =
		\{(x_I, \cF, \alpha) \in \Gr_{\GL_n,I} \mid \cO^n_X(-N\Gamma_{x_I}) \subset \cF \subset \cO^n_{X}(N\Gamma_{x_I})\}$$ (here we are implicitly identifying $\GL_n$-torsors with locally free sheaves of rank $n$). Then this is a projective scheme relative to $X^I$, and $\Gr_{\GL_n,I}$ is filtered by the $\Gr^{(N)}_{\GL_n,I}$'s. For the case of a general $G$, one chooses a faithful representation $\rho:G\to\GL_n$ and defines $\Gr^{(N)}_{G,I}$ via the closed embedding between Beilinson-Drinfeld Grassmannians induced by $\rho$ (cf. \cite[Propositions 1.2.5, 1.2.6]{Zhu}).
			\end{recall}
		\begin{rem}Note that $\Gr^{(N)}_{G,I}$ is a union of strata of $\Gr_{G,I}$ by a principle similar to \cref{filtration-and-stratification-of-Gr}.
		
		For instance, let $G=\GL_n, I\in \fs$ and $N\in \nn$. Choose $T=(\Gm)^n$ given by the diagonal matrices in $\Gm$. This induces an embedding of posets $\xt\hookrightarrow \nn^n$ whose image is spanned by those $n$-uples $(\mu_1,\dots,\mu_n)$ where $\mu_1\geq \dots\geq\mu_n$ and $\nu_I\in(\xt)^I$ given by $\nu$ in all components. Via this identification, it makes sense to define $\nu=(N,\dots,N)\in \xt$. Then $$\big(\Gr_{I}^{(N)}\big)_\red=\overline{\Gr_{I,\id:I\to I, \nu_I}},$$ cf. also \cite[Recall A.15]{WM}.

	\end{rem}

\begin{rem}\label{fibers-of-GrI}
	Let $I\in \fs$. Then the pullback $X\times_{X^I}\Gr_{I}^{(N)}$ along the principal diagonal $X\to X^I$ is isomorphic to $\Gr_{X}^{N\cdot |I|}$.
\end{rem}
	
	\begin{recall}Recall now from \cref{perverse-and-stratification} that the action of $G_\cO$ on $\Gr$ restricts to $\Gr_{\leq \mu}$ for each $\mu\in \xt$ and that this restriction factors through a quotient $$G_\cO\twoheadrightarrow G_{\cO}^{(j_\mu)}$$ for a sufficiently large natural number $j_\mu$, where $$G_{\cO}^{(j_\mu)}:=G(\C\taylor/t^{j_\mu})\simeq G(\C[t]/t^{j_\mu})$$ is now a group scheme of finite type.
	In a totally similar way, the action of $G_\cO$ on $\Gr$ restricts to $\Gr^{(N)}$ for each $N\in\nn$ and that this restriction factors through the quotient $$G_\cO\twoheadrightarrow G_{\cO}^{(j_N)}$$ for a sufficiently large $j_N$. In what follows, we will privilege the filtration by $\Gr^{(N)}$'s in that it extends in a slightly simpler way to the Beilinson-Drinfeld Grassmannian.
		\end{recall}
	For $j$ a natural number, and $Z\subset X$ a closed subscheme, let $Z_{(j)}$ denote the $j$-thickening of $Z$, i.e. the scheme $(Z, \cO_X/\cI_Z^j)$.

	\begin{defin}\label{pro-group}Let $j$ be a natural number, and $I\in \fs$. We define $$G_{\cO, I}^{(j)}$$ as the group scheme, relative to $X^I$, classifying $$\{x_I\in X^I, g\in G((\Gamma_{x_I})_{(j)})\}.$$
	\end{defin}
	
	
	\begin{rem}\label{remark-GO2j}Let $I\in \fs$, $X\to X^I$ be the diagonal morphism, and $j\in \nn$. Then we have isomorphisms
	\begin{gather*}G_{\cO,I}^{(j)}\times_{X^I}X\simeq G_{\cO,X}^{(|I|\cdot j)}\\
		G_{\cO,I}^{(j)}\times_{X^I}X^\id\simeq \big(G_{\cO,X}^{(j)}\big)^I\times_{X^I}X^\id.
		\end{gather*}
		
		\begin{proof}
			The first part follows from the fact that, if $x_I=(x,\dots,x)\in X^I$ for some $x\in X$, the subscheme $\Gamma_{x_I}$ is defined as the sum of $|I|$ copies of the divisor $\{x\}\subset X$. Hence, its ideal of definition is $\cI_x^{|I|}$, and thus $\Gamma_{x_I}^{(j)}$ is the closed subscheme supported at $x$ and with structure sheaf $$(x,\cO_X/\cI_x^{|I|\cdot j}).$$
			The second part is straightforward from \cref{factorization-GO}.
		\end{proof}
	\end{rem}
	\begin{rem}\label{smoothness-arc-group} It is easy to see that $$G_{\cO,I}\simeq\lim_j G_{\cO,I}^{(j)}.$$
		The functor $G_{\cO,I}^{(j)}$ is a smooth group scheme of finite type over $X^I$ (by \cite[Lemma 2.5.1]{Raskin-Principal-II}). Smoothness may seem a bit counter-intuitive, since the fiber of $G_{\cO,I}$ (say $I=\{1,2\}$) over a point in the diagonal $X\subset X^2$ is given by a copy of $G_\cO$, while for instance the fiber over a point in the disjoint locus of $X^2$ is given by $G_\cO\times G_\cO$. However, one cannot argue that this contradicts flatness, because we are dealing with infinite-dimensional objects. And in fact, when one truncates to $G_{\cO,I}^{(j)}$, the following happens. The fiber of $G_{\cO,I}^{(j)}$ at a point $(x,x)$ on the diagonal is $$G(\C[t]/(t^{2j}))$$ by \cref{remark-GO2j}. On the other hand, the fiber at a point $(x_1,x_2)$ outside the diagonal is $$G_{\cO}^{(j)}\times G_{\cO}^{(j)}.$$
		Therefore, the dimensions of these fibers are $$\dim G(\C[t]/(t^{2j}))=(\dim G)^{2j}$$ and $$\dim (G(\C[t]/t^{j}))^2=(\dim G)^{2j}.$$
	\end{rem}

	\begin{rem}\label{action-factors-BD}Let $I\in \fs,N\in \nn$. The action of $G_{\cO, I}$ on $\Gr_I$ over $X^I$ described in \cref{action-L} restricts to each $\Gr^{(N)}_{I}$. By the same proof of \cite[Corollary 3.7]{Richarz}, the restriction factors through a quotient $G_{\cO,I}^{(j_{N})}$ for a sufficiently large natural number $j_{N}$. Note also, again by the same proof, that $j_N$ is independent of $I$.\end{rem}
	
	The following definitions are inspired by \cite[2.4.3]{Achar-Riche}:
	
	\begin{defin}
		Let $I\in\fs,N\in\nn, j \geq j_{N}$.
		We define $$\hck_{I}^{(N,j)}=G_{\cO, I}^{(j)}\backslash \Gr_{I}^{(N)}.$$ as the fpqc quotient stack in the category $\Stk_{/X^I}$. 
		
	\end{defin}

	Since the action of $G_{\cO,I}^{(j)}$ respects the stratification of $\Gr_{I}^{(N)}$, each $\hck_{I}^{(N,j)}$ is a stratified \'etale stack over $X^I$, locally of finite type, in the sense of \eqref{stratified-stacks}. Also, its structure map to $X^I$ is stratified when we endow $X^I$ with the incidence stratification.
	Therefore, for $j\geq j_N$, we obtain a well-defined object $$\hck_{I}^{(N,j)}\in {\StrStklft}_{/X^I}.$$
	\begin{note}
		From now on, we will fix a function $\mathbb N\to \mathbb N, N\mapsto j_N$, witnessing a choice of index such that the action of $G_{\cO,I}$ on $\Gr_I^{(N)}$ factors through $G_{\cO,I}^{(j_N)}$. As remarked above, we can fix a uniform choice which works for every $I$. The specific choice is totally irrelevant and all results are independent of it.
	\end{note}
	
\begin{prop}\label{transitions-are-uni}For $N\in\nn, j\geq j'\geq j_N$, the transition maps $\hck_{I}^{(N,j)}\to \hck_{I}^{(N,j')}$ belong to the class $\uni$ defined in \cref{uni}.\end{prop}
\begin{proof}
First of all, the maps are smooth by \cite[\texttt{Tag 02K5}]{Stacks-Project} (see also \cite[2.3]{Wedhorn}, \cite[Lemma 2.5]{Raskin-Principal-II}). More precisely, they are smooth quotients relative to $X^I$, in particular they are representable.

Note then that that the kernel $K_{j,j'}$ of the map of group schemes $G_{\cO}^{(j)}\to G_{\cO}^{(j')}$ is unipotent.
Moreover, the Beilinson-Drinfeld version of $K_{j,j'}$, i.e. the relative kernel of $G_{\cO,I}^{(j)}\to G_{\cO,I}^{(j')}$, splits as $K_{j,j'}^{|J|}\times_{X^I}X^\phi$ over each stratum $X^\phi$ of $X^I$ indexed by a partition $[\phi:I\to J]$.
Hence, by factorization (\cref{factorization-GO}), over each stratum $X^\phi$ the map $G_{\cO,I}^{(j)}\to G_{\cO,I}^{(j')}$ is a quotient map with fiber $K_{j,j'}^{|J|}$, which achieves the proof. Note that the pullbacks of $G_{\cO,I}^{(j)}$ and $G_{\cO,I}^{(j')}$ to strata of $X^I$ are themselves strata inside $G_{\cO,I}^{(j)}$ and $G_{\cO,I}^{(j')}$ respectively.
\end{proof}

	\begin{defin}
		We define, for $j\geq j_1$, $$\hck_I^{(j)}=\hck_I^{(1,j)}$$ and $$\hck_I=``\lim_{j\geq j_1}"\ \hck_I^{(1,j)}\in \Pro(\StrStklft).$$\end{defin}

	

Hence, the object $\hck_I$ belongs to the full subcategory $$\Pro_\uni(\StrStklft)\subset \Pro(\StrStklft)$$ from \cref{uni}.

\subsection{The Hecke stack over the Ran space}
 \begin{defin}
 	The \textit{Ran presheaf} of $X$ is the colimit $$\Ran(X)=\colim_{I\in \fs^\op}X^I$$ in the category $\Fun(\Aff_\C^\op,\sets)$, where the diagram is the one that associates to a map $I\to J$ the induced diagonal map $X^J\to X^I$.
 \end{defin}

The formation of this colimit loses any kind of descent, see e.g. \cite[Warning 2.4.4]{Gaitsgory-Tamagawa}.

 \begin{rem}The functor of points of $\Ran(X)$ can be described as $$\Ran(X)(\Spec R)=\{S\subset X(R)\textup{ nonempty unordered finite subset}\}.$$ For a proof, see e.g. \cite[Lemma 3.3]{WM}.\end{rem}
 
 \begin{notation}
 	For $S\in \Ran(X)(R)$, we denote by $\Gamma_S$ the divisor $\sum_{x_i\in S}\Gamma_{x_i}$.
 \end{notation}
We now want to promote the association $I\mapsto \hck_I$
		 to a functor  $\fs^\op\to \Pro(\StrStklft)$.

		 	\begin{lem}\label{GrRan}A surjection $\tau:I\to J$ in $\fs$ induces a closed immersion $\Gr_J^{(1)}\hookrightarrow \Gr_I^{(1)}$, and this determines a functor $$\fs^\op\to \Str\Sch^\lft_\C$$
		 $$I\mapsto \Gr_I^{(1)}$$ whose colimit $$\Gr_\Ran=\colim_{I\in \fs^\op}\Gr_I^{(1)}\in \PSh(\Str\Sch_\C^\lft)$$ lives over the algebraic Ran space of $X$ and classifies the datum of $$(S\subset X(R), \cF\in \Bun(X_R), \alpha:\cF|_{X_R\setminus \Gamma_S}\triv\cT|_{X_R\setminus \Gamma_S}).$$ \end{lem}
		 \begin{proof}
		 	Let $\widetilde\tau$ be the diagonal $X^J\to X^I$ induced by $\tau$. We define the sought-after closed immersion as \begin{equation}\label{embedding-GrIJ}(x_J,\cF,\alpha)\mapsto (\widetilde\tau\circ x_J,\cF, \alpha).\end{equation} 
		 	One can easily see that this is stratified. To prove the functor-of-points description, let us define the following category $\cJ$:
		 	
		 	\begin{gather*}\cJ=\{I\in \fs,\underline N\in  \nn^I\}\\
		 		\Hom((I,\underline N),(J,\underline M))= \{\phi:J\twoheadrightarrow I\mid N_i\leq \sum_{j\in J\mid\phi(j)=i}M_j\ \forall i\in I\}\end{gather*}
		 		For $(I,\underline N)\in \cJ$, let $\Gr_I^{(\underline N)}$ be the closed subscheme of the ind-scheme $\Gr_I$ defined by setting 
		 		$$\Gr_I^{(\underline N)}=\{x_I\in X^I, \cF,\alpha\mid\cO^n_X(-\sum_{i\in I}N_i\Gamma_{x_i})\subset \cF\subset \cO^n_X(\sum_{i\in I}N_i\Gamma_{x_i})\}$$ for $\GL_n$ and then proceeding as in \cref{GrN}.
		 		There is a functor \begin{gather*}\cJ^\op\to \Str\Sch_\C^\lft\\
		 			I\mapsto \Gr_I^{(\underline N)}
		 			\end{gather*}
		 		sending a map in $\cJ$ to the restriction of \eqref{embedding-GrIJ} to $\Gr_I^{(\underline N)}$ (the condition that $N_i\leq \sum_{j\in J\mid\phi(j)=i}M_j$ ensures that this restrictions takes values in $\Gr_I^{(\underline M)}$). One can see that the functor of points appearing in the statement is equivalent to the colimit $$\colim_{(I,\underline N)\in\cJ^\op}\Gr_{I}^{(\underline N)}.$$ Now, the functor \begin{gather*}F:\fs\to \cJ\\
		 			I\mapsto (I,\const_1)\end{gather*} is initial. Indeed, for any $(I,\underline N)\in \cJ$ we can consider the object $J=\sqcup_{i\in I}N_i$ and the canonical surjection $J\to I$ induced by the definition of $J$. This induces a morphism in $\cJ$ between $(J,\const_1)$ and $(I,\underline N)$, hence the overcategory $F/(I,\underline N)$ is nonempty. For any other morphism $\widetilde \tau:(J',\const_1)\to (I,\underline N)$ in $\cJ$ with underlying surjection $\tau:J'\to I$, we have that for every $i\in I$ then $N_i\leq \sum_{j\in J'\mid \tau(j)=i}1$. Hence there exist surjections $\nu_i:\tau^{-1}(i)\to N_i$ for each $i\in I$, which assemble to a surjection $\nu: J'\to \sqcup_{i\in I}N_i$. By construction, $\widetilde \tau$ factors through the image of $\nu$ under $F$.
		 			
		 			 Therefore, we have an induced isomorphism at the level of colimits, which concludes the proof.
		 \end{proof}
		
			\begin{construction}\label{HckRan} Observe now that, given a surjection $I\to J$, for any $j\in\nn$ there exist $j'$ and a map of relative group schemes $$G_{\cO,J}^{(j')}\to G_{\cO, I}^{(j)}$$ over the diagonal $X^J\to X^I$. The index $j'$ need not be the same as $j$: for instance, take $I=\{1,2\}, J=\{1\}$. Then we are looking at the map $$G_{\cO,X}^{(2j)}\simeq  G_{\cO, X^2}^{(j)}\times_{X^2,\Delta}X\hookrightarrow G_{\cO,X^2}^{(j)}.$$

			We thus obtain a map of pro-relative group schemes \begin{equation}\label{JI-progroups}``\lim_{j\in \nn}{}" G_{\cO,J}^{(j)}\to ``\lim_{j\in \nn}{}" G_{\cO,I}^{(j)}\end{equation} over the diagonal $X^J\to X^I$. This, together with \cref{GrRan} and the fact that the map $\Gr_J^{(1)}\to \Gr_I^{(1)}$ from \cref{GrRan} is equivariant relatively to the map \eqref{JI-progroups}, induces a map 
		 
		 $$``\lim_{j\geq j_1}" \hck_{J}^{(j)}\to ``\lim_{j\geq j_1}" \hck_{I}^{(j)}$$
		 in $\Pro(\StrStklft)$.
		We therefore have a well-defined functor $$\fs^\op\to \Pro(\StrStklft)$$
		$$I\mapsto ``\lim_{j\geq j_{1}}"\hck_{I}^{(j)}.$$
		
	
	

We can now consider the colimit $$\hck_\Ran=\colim_{I\in \fs} ``\lim_{j\geq j_1}"\hck_I^{(j)}$$ in the category $\PSh(\Pro_\uni(\StrStklft))$, which we denote by $\PStrStk$ (\cref{stratified-prestacks}). We have a natural map $\hck_\Ran\to \Ran(X)$ in $\PStrStk$.
	
		\end{construction}


	\subsection{The convolution Beilinson-Drinfeld Grassmannian}\label{convolution-grassmannians}

	Our goal now is to transfer the convolution diagram from \cref{quotient-convolution-diagram} to the ``Beilinson--Drinfeld'' setting.
	
	\begin{defin}\label{truncated-loop-group}Given $I_1,I_2\in \fs,[\phi:I_1\twoheadrightarrow J]$ a partition, $\mu_J\in (\xt)^J,N,j\in \nn$. We define \begin{gather*}G_{\cK,I_1,I_2}^{(\infty,j)}=\{x_{I_1}\in X^{I_1}, x_{I_2}\in X^{I_2}, \cF\in \Bun(X), \alpha:\cF|_{X\setminus \Gamma_{x_{J}}}\triv\cT|_{X\setminus \Gamma_{x_{J}}}, \mu:\cF|_{(\Gamma_{x_{I_2}})_{(j)}}\triv \cT|_{(\Gamma_{x_{I_2}})_{(j)}}\}\\
			G_{\cK,I_1,I_2, \phi, \mu_J}^{(j)}=G_{\cK,I_1,I_2}^{(\infty,j)}\times_{\Gr_{I_1}}\Gr_{I_1,\phi,\mu_J}\\
			G_{\cK,I_1,I_2}^{(N,j)}=G_{\cK,I_1,I_2}^{(\infty,j)}\times_{\Gr_{I_1}}\Gr_{I_1}^{(N)},\end{gather*}
		where the map $G_{\cK,I_1,I_2}^{(j)}\to \Gr_{I_1}$ is the one that only remembers $(x_{I_1}, \cF,\alpha)$. We also define
		\begin{gather*}
			G_{\cK,I_1,I_2, \phi, \mu_J}=G_{\cK,I_1,I_2}\times_{\Gr_{I_1}}\Gr_{I_1,\phi,\mu_J}\\
			G_{\cK,I_1,I_2}^{(N)}=G_{\cK,I_1,I_2}\times_{\Gr_{I_1}}\Gr_{I_1}^{(N)}.
		\end{gather*}
	\end{defin}
	
	\begin{defin}\label{defin-Conv-N}
	Let $k\geq 1,I_1,\dots,I_k\in \fs,N\geq 0$. We define the scheme $$\Conv_{I_1,\dots,I_k}^{(N)}=G_{\cK,I_1, I_2}^{(N)}\times^{G_{\cO,I_2}}G_{\cK,I_2,I_3}^{(N)}\times^{G_{\cO,I_3}}\dots\times^{G_{\cO,I_k}}\Gr_{I_k}^{(N)}$$ as the quotient of $$G_{\cK,I_1,I_2}^{(N)}\times_{X^{I_2}}\dots\times_{X^{I_k}} \Gr_{I_k}^{(N)}$$
	with respect to the action of $\prod_{i=2,\dots,k}G_{\cO,I_i}$ described as follows. For $i=2,\dots, k-1$, each $G_{\cO,I_i}$ acts on $G_{\cK,I_{i-1},I_i}^{(N)}\times_{X^{I_i}}G_{\cK,I_i,I_{i+1}}^{(N)}$,
	relatively to $X^{I_i}$, as in \cref{action-L}. 
	For $i=k$, $G_{\cO,I_k}$ acts on $G_{\cK,I_{k-1},I_k}^{(N)}\times_{X^{I_k}}\Gr_{I_k}^{(N)}$ again like in \cref{action-L}. 

	In particular, this is a schematic quotient.
	\end{defin}
	\begin{rem}\label{truncated-convolution-Grassmannian}For any $j\geq j_1$, the expression above can be rewritten as $$G_{\cK,I_1,I_2}^{(N,j)}\times^{G_{\cO,I_2}^{(j)}}G_{\cK,I_2,I_3}^{(N,j)}\times^{G_{\cO,{I_3}}^{(j)}}\dots\times^{G_{\cO,{I_k}}^{(j)}}\Gr_{I_k}^{(N)}$$ (cf. \cite[Discussion after Lemma 5.2.3]{Zhu}).
	\end{rem}
	\begin{defin}Let $I_1,\dots,I_k\in \fs.$ We define the \textit{convolution Grassmannian} associated to $I_1,\dots,I_k$ as $$\Conv_{I_1,\dots, I_k}=G_{\cK,I_1, I_2}\times^{G_{\cO,I_2}}G_{\cK,I_2,I_3}\times^{G_{\cO,I_3}}\dots\times^{G_{\cO,I_k}}\Gr_{I_k}$$ where the notation with the superscripts has the same meaning as in \cref{defin-Conv-N}.\end{defin}
	
	\begin{rem}The convolution Grassmannian is filtered by the $\Conv_{I_1,\dots,I_k}^{(N)}$ hence it is an ind-scheme.\end{rem}
	\begin{rem}\label{moduli-interpr-Conv}
		The convolution Grassmannian classifies the datum \begin{gather*}\{(x_{I_1},\dots,x_{I_k}),x_{I_j}\in X^{I_j}\textup{ for each }j=1,\dots, k,\cF_1,\dots,\cF_k,\\\alpha:\cF_1|_{X\setminus x_{I_1}}\simeq\cT|_{X\setminus x_{I_1}}, \eta_j:\cF_j|_{X\setminus x_{I_j}}\simeq \cF_{j-1}|_{X\setminus x_{I_j}}, j=2,\dots,k\}.\end{gather*}
	\end{rem}

	\begin{notation}\label{notation-string}
		Let us fix the following notation. A general element of $\Conv_{I_1,\dots,I_k}$ will be denoted by $$\Big(\begin{tikzcd}
			\cT & {\cF_1} & {\cF_2} &\dots & {\cF_k}
			\arrow["{X\setminus x_{I_1}}","\alpha"', from=1-2, to=1-1]
			\arrow["{X\setminus x_{I_2}}","\eta_2"', from=1-3, to=1-2]
			\arrow["{X\setminus x_{I_2}}","\eta_3"', from=1-4, to=1-3]
			\arrow["{X\setminus x_{I_k}}","\eta_k"', from=1-5, to=1-4]
		\end{tikzcd}\Big).$$
		Of course this is just a symbolic notation, in that each of the arrows drawn here is defined over a (potentially) different open set.
	\end{notation}
		Let now $k\geq 1,$ $I_1,\dots,I_k, I=I_1\sqcup\dots\sqcup I_k, J\in \fs,$ and $[\phi:I\twoheadrightarrow J]$ a partition. Recall the definition of $X^\phi$ from \cref{notation-partitions}.

	\begin{prop}\label{factorization-property-Conv}
		There is an isomorphism $$\Conv_{I_1,\dots,I_k}\times_{X^I}{X^{\phi}}\simeq \prod_{j\in J}\Conv_{\Delta,m_j}\times_{X^{I}} X^{\phi}$$
		where:\begin{itemize}\item $m_j=\#\{h\mid 1\leq h\leq k, \phi^{-1}(j)\cap I_h\neq \varnothing\}$ \item  $\Conv_{\Delta,m_j}:=\Conv_{\{*\},\dots\ m_j \textup{ times } \dots,\{*\}}\times_{X^{m_j}}X$ (the map from $X$ to $X^{m_j}$ being the diagonal)
			\item the map $\prod_{j\in J}\Conv_{\Delta,m_j}\to X^{I_1\sqcup \dots\sqcup I_k}$ is induced by the diagonal map $X^J\to X^{I_1\sqcup\dots\sqcup I_k}$ associated to $\phi$.
		\end{itemize}
	\end{prop}
	
	\begin{proof}(Sketch). This proof has been suggested to us by Robert Cass. We treat the case $k=3$, $I_1=I_2=I_3={1}$. We have three essentially distinct cases:
		\begin{itemize}
			\item $J=\{1,2,3\}$, $\phi=\id$. The isomorphism (adopting \cref{notation-string}) is given by $$\Conv_{I_1,I_2,I_3}\times_{X^3}X^\phi\simeq (\Gr_X\times \Gr_X\times \Gr_X)\times_{X^3}X^\phi$$
			$$(\begin{tikzcd}
				\cT & {\cF_1} & {\cF_2} & {\cF_3}
				\arrow["{X\setminus x_1}","\alpha"', from=1-2, to=1-1]
				\arrow["{X\setminus x_2}","\eta_2"', from=1-3, to=1-2]
				\arrow["{X\setminus x_3}","\eta_3"', from=1-4, to=1-3]
			\end{tikzcd})\mapsto $$$$(\begin{tikzcd}
				\cT & {\cF_1}
				\arrow["{X\setminus x_1}","\alpha"', from=1-2, to=1-1]
			\end{tikzcd},\begin{tikzcd}[column sep="2cm"]
				\cT & {\cF_2}
				\arrow["{X\setminus x_2}","\alpha|_{\mathring X_{x_2}}\circ \eta_2|_{\mathring X_{x_2}}"', from=1-2, to=1-1]
			\end{tikzcd}, \begin{tikzcd}[column sep="2cm"]
				\cT & {\cF_3}
				\arrow["{X\setminus x_3}","\alpha|_{\mathring X_{x_3}}\circ \eta_2|_{\mathring X_{x_3}}\circ \eta_3|_{\mathring X_{x_3}}"', from=1-2, to=1-1])
			\end{tikzcd}$$
			whose inverse is given by gluing sheaves (which can be done since the points are distinct).
			
			\item $J=\{1,2\}, \phi(1)=\phi(3)=1, \phi(2)=2$ (we treat this case and not the case $\phi(1)=\phi(2)=1, \phi(3)=3$ since we want to show that our argument works even when the two equal coordinates are not adjacent to one another).
			The isomorphism is given by $$\Conv_{I_1,I_2,I_3}\times_{X^3}X^\phi\simeq (\Conv_{\Delta,2}\times\Gr_X)\times_{X^3}X^\phi$$
			$$(\begin{tikzcd}
				\cT & {\cF_1} & {\cF_2} & {\cF_3}
				\arrow["{X\setminus x_1}","\alpha"', from=1-2, to=1-1]
				\arrow["{X\setminus x_2}","\eta_2"', from=1-3, to=1-2]
				\arrow["{X\setminus x_1}","\eta_3"', from=1-4, to=1-3]
			\end{tikzcd})\mapsto $$$$(\begin{tikzcd}[column sep="2cm"]
				\cT & {\cF_1} &\cF_3
				\arrow["{X\setminus x_1}","\alpha"', from=1-2, to=1-1]
				\arrow["{X\setminus x_1}","\eta_2|_{\mathring X|_{x_1}}\circ \eta_3|_{\mathring X_{x_1}}"', from=1-3, to=1-2]
			\end{tikzcd},\begin{tikzcd}[column sep="2cm"]
				\cT & {\cF_2}
				\arrow["{X\setminus x_2}","\alpha|_{\mathring X_{x_2}}\circ \eta_2|_{\mathring X_{x_2}}"', from=1-2, to=1-1]
			\end{tikzcd})$$
			
			with inverse $$(\begin{tikzcd}
				\cT & {\cF_1} &\cF_3
				\arrow["{X\setminus x_1}","\alpha"', from=1-2, to=1-1]
				\arrow["{X\setminus x_1}","\eta"', from=1-3, to=1-2]
			\end{tikzcd},\begin{tikzcd}
				\cT & {\cF_2}
				\arrow["{X\setminus x_2}","\beta"', from=1-2, to=1-1]
			\end{tikzcd})\mapsto  $$$$(\begin{tikzcd}[column sep="2cm"]
				\cT & {\cF_1} & {\cF_2} & {\cF_3}
				\arrow["{X\setminus x_1}","\alpha"', from=1-2, to=1-1]
				\arrow["{X\setminus x_2}","\alpha|_{\mathring X_{x_2}}^{-1}\circ\beta|_{\mathring X_{x_2}}"', from=1-3, to=1-2]
				\arrow["{X\setminus x_1}","\eta|_{\mathring X_{x_1}}\circ\alpha|_{\mathring X_{x_1}}^{-1}\circ\beta|_{\mathring X_{x_1}}"', from=1-4, to=1-3]
			\end{tikzcd})$$
			
			\item $J=\{1\}$. The isomorphism is the identity.\end{itemize}\end{proof}

	\begin{defin}\label{notation-x-k}
		Let $x\in X$ be a closed point. We define $$\Conv_{x,k}=\Conv_{\Delta,k}\times_X\{x\}.$$\end{defin}
	
	\begin{rem}\label{remark-conv-is-twisted-product}
		For any $x\in X(\C)$, the object $\Conv_{x,k}$ is isomorphic to $$\Conv_k=\overbrace{G_\cK\times^{G_\cO}\dots\times^{G_\cO}G_\cK}^{k-1}\times^{G_\cO}\Gr$$ (notation as in \cref{definition-of-twisted-product}).
	\end{rem}
	In a similar fashion as \cref{translational-invariance}, one can prove:
	\begin{prop}\label{translational-invariance-conv} Let $X=\A^1_\C, k\geq 1.$ With the notations of \cref{factorization-property-Conv}, the choice of a point $x\in X$ induces a splitting $$\Conv_{\Delta,k}\simeq \Conv_{x,k}\times \A^1_\C.$$
	\end{prop}

	\begin{cor}\label{factorization-for-A1} In the case when $X=\mathbb A^1_\C$, \cref{factorization-property-Conv} specializes to \begin{gather*}\Gr_{I}|_{(\Ac)^\phi}\simeq \big(\prod_J\Gr\big)\times  (\Ac)^\phi\\
		\Conv_{I_1,\dots,I_k}|_{(\Ac)^{\phi}}\simeq( \prod_{j\in J}\Conv_{m_j})\times (\Ac)^{\phi}\end{gather*}
		where the splitting is induced by a choice of any point $x\in \Ac$.
	\end{cor}
	\begin{proof}
		It suffices to apply \cref{translational-invariance} and \cref{translational-invariance-conv} respectively.
	\end{proof}

	\begin{construction}\label{stratification-ConvX}
		One can define a stratification of $\Conv_{\Delta,k}$ (the case $k=1$ being $\Gr_{X}$), as follows. The stratifying poset is $(\xt)^k$. First of all, $G_{\cK, \Delta}:=G_{\cK,\{*\}, \{*\}}\times_{\Gr_{\{*\},\{*\}}}\Gr_{\{*\}}$ (the map $\Gr_{\{*\}}\to \Gr_{\{*\},\{*\}}$ being induced by the diagonal of $X^2$) inherits a stratification over $\xt$ from $\Gr_{\{*\}}=\Gr_X$. 
		This induces a stratification on the product $$G_{\cK,\Delta}\times_X\dots\times G_{\cK,\Delta}\times_X\Gr_{X}$$
		and one can check that this stratification passes to the multiple quotient $$G_{\cK,\Delta}\times^{G_{\cO,X}}G_{\cK,\Delta}\times^{G_{\cO,X}}\dots\times^{G_{\cO,X}}\Gr_{X}.$$
		In other words, let $\mu_1,\dots,\mu_k\in\xt$. 
		The set $$\Conv_{\Delta,k, \mu_1,\dots,\mu_{k}}(\C)$$ is the subset of $\Conv_{\Delta, k}(\C)$ where one imposes the condition that (with the notations of \cref{moduli-interpr-Conv}) $\Inv_x(\alpha)= \mu_1, \Inv_{x}(\eta_i)=\mu_i$ for every $i\geq 2$.

		Let $x\in X$ be a closed point. Then each stratum of $\Conv_{x,k}$ can be identified with $G_{\cK,\mu_1}\times^{G_\cO}\dots\times^{G_\cO}\Gr_{\mu_{k}}$, where $G_{\cK,\mu_i}$ is the preimage of $\Gr_{\mu_i}$ along the quotient map. This definition is the direct generalization from $k=2$ to arbitrary $k$ of \cite[after Lemma 4.3]{MV}.
	\end{construction}
	These strata are not orbits for a group action, but they are smooth.
	\begin{prop}\label{strata-of-conv-are-smooth}Each stratum $$G_{\cK,\mu_1}\times^{G_\cO}\dots\times^{G_\cO}G_{\cK,\mu_{k-1}}\times^{G_\cO}\Gr_{\mu_{k}}$$ is a smooth locally closed subscheme of $\Conv_{k}$.
	\end{prop}
	
	\begin{proof} The following proof has been suggested to us by Mark Macerato. First of all, we note that for $j$ sufficiently large, then $$G_{\cK,\mu_1}\times^{G_\cO}\dots\times^{G_\cO} G_{\cK, \mu_{k-1}}\times^{G_\cO}\Gr_{\mu_{k}}\simeq G_{\cK,\mu_1}^{(j)}\times^{G_{\cO}^{(j)}}\dots\times^{G_{\cO}^{(j)}} G_{\cK,\mu_{k-1}}^{(j)}\times^{G_{\cO}^{(j)}}\Gr_{\mu_{k}}$$ (with the same argument as \cref{truncated-convolution-Grassmannian}). Now, $$G_{\cK,\mu_1}^{(j)}\times G_{\cK,\mu_2}^{(j)}\times\dots\times G_{\cK,\mu_{k-1}}^{(j)}\times\Gr_{\mu_{k}}\to \Gr_{\mu_1}\times\Gr_{\mu_2}\times\dots\times\Gr_{\mu_{k-1}}\times\Gr_{\mu_{k}}$$ is a torsor with fiber $(G_{\cO}^{(j)})^{\times k-1}$, which is a smooth group scheme. Since the base is smooth (\cite[Proposition 2.1.5 (1)]{Zhu}), the total space is smooth as well. Now, the map $G_{\cK,\mu_1}^{(j)}\times G_{\cK,\mu_2}^{(j)}\times\dots\times\Gr_{\mu_{k}}\to G_{\cK,\mu_1}^{(j)}\times^{G_{\cO}^{(j)}}G_{\cK,\mu_2}^{(j)}\times^{G_{\cO}^{(j)}}\dots\times^{G_{\cO}^{(j)}}\Gr_{\mu_{k}}$ is the fpqc schematic quotient of a smooth scheme with respect to the group $(G_{\cO}^{(j)})^{\times k-1}$. In particular, it is an fpqc covering, and therefore by \cite[\texttt{Tag 02VL}]{Stacks-Project} we conclude.
	\end{proof}
	
	\begin{construction}\label{BD-stratifications}
		
		We can now define a stratification for $\Conv_{I_1,\dots,I_k}$. Recall the notation in \cref{factorization-property-Conv}. The stratifying poset will be $$\Cw_{I_1,\dots,I_k}=\{[\phi:I_1\sqcup\dots\sqcup I_k\twoheadrightarrow J], \mu_J=(\mu_j^{h_{1}^j},\dots,\mu_j^{h^j_{m_j}})\in\prod_{j\in J} (\xt)^{m_j}\}$$ where ``$\Cw$'' stays for ``coweight'' and, for each $j\in J$, the $h_i^j$'s are those indexes for which $\phi^{-1}(j)\cap I_{h_i}\neq\varnothing$.
		The stratification for $\Conv_{I_1,\dots,I_k}$ is then defined as $$\Conv_{I_1,\dots,I_k,\phi,\mu_{J}}=\left(\prod_{j\in J}\Conv_{\Delta,m_j, \mu_j^{h_1^j},\dots,\mu_j^{h_{m_j}^j}}\right)\times_{X^I} X^{\phi}\hookrightarrow \Conv_{I_1,\dots,I_k}$$ where the embedding is induced by \cref{factorization-property-Conv}.
	\end{construction}
	
	\begin{rem}\label{action-on-ConvBD}Let $I_1,\dots,I_k\in\fs,I=I_1\sqcup \dots\sqcup I_k,N\in\nn,j\geq j_N$. There is an action of $G_{\cO,I}$ over $\Conv_{I_1,\dots, I_k}$, relative to $X^I$, which modifies the first trivialization at all points ${x_{I_1},\dots, x_{I_k}}$.
	
	This factors as an action of $G_{\cO,I}^{(j)}$ on $\Conv_{I_1,\dots,I_k}^{(N)}$ relative over $X^{I}$.
	\end{rem}

	\begin{rem}\label{action-on-conv}
		Let us inspect the behaviour on different strata of the action of $G_{\cO,I}^{(j)}$ on $\Conv_{I_1,\dots,I_k}^{(N)}$ defined above. For simplicity, we look at $k=2,I_1=I_2=\{*\}$, and we distinguish the two cases of equal points $x_1=x_2=x$ and of two distinct points $x,y$. In the first case, the action is just the action of $G_{\cO}^{(2j)}$ on the first component of  $G_{\cK}^{(2N)}\times^{G_\cO}\Gr^{(2N)}$. In the second case, with the notations of \cref{moduli-interpr-Conv}, the modification of $\alpha$ at $y$ propagates to $\eta=\eta_2$ through factorization: more precisely, up to the isomorphism $\Conv_{x,y}\simeq{\Gr_x\times\Gr_y}$ induced by \cref{factorization-property-Conv} the action splits as the canonical componentwise left action $$G_{\cO,x}^{(j)}\times G_{\cO,y}^{(j)}\circlearrowright\Gr_{x}^{(N)}\times \Gr_{y}^{(N)}.$$ This follows directly from inspecting the proof of \cref{factorization-property-Conv}.

		In particular, the action at $y$ is not trivial: this may seem in contradiction with the principle applied for instance in the proof of \cref{lemma-messy-equivariance-with-BL}, where it is said that modifying a trivialization away from its ``critical points'' ($x$ in this case) does not change the datum up to some isomorphism $\Phi$. The point here is that this isomorphism $\Phi$ need not be compatible with the rest of the datum, specifically with the isomorphism $\eta_2$ in the notations of \cref{moduli-interpr-Conv}, which is defined on $X\setminus \{y\}$.
	\end{rem}

	\begin{defin}\label{truncated-hck}Let $k\geq 1,I_1,\dots, I_k\in\fs, I=I_1\sqcup\dots\sqcup I_k,j\geq j_{1}$. We define
		$$\hck_{I_1,\dots,I_k}^{(j)} = G_{\cO, I}^{(j)}\backslash \Conv_{I_1,\dots, I_k}^{(1)}$$
		
	\end{defin}
	
	\begin{rem}
		\cref{truncated-hck} is functorial in $(I_1,\dots,I_k)\in \fs^\op$, in the sense that given surjections $I_1\to J_1,\dots I_k\to J_k$ we have a map of pro-objects $``\lim_{j\geq j_1} " \hck_{J_1,\dots,J_k}^{(j)}\to ``\lim_{j\geq j_1} " \hck_{I_1,\dots,I_k}^{(j)}$, exactly as in \cref{HckRan}.
		
		This yields a functor \begin{gather*}\label{BD-Hecke}(\fs^\op)^{\times k}\to \Pro_\uni(\StrStklft)\\
		(I_1,\dots,I_k)\mapsto``\lim_{j\geq j_1} "  \hck_{I_1,\dots,I_k}^{(j)}.\end{gather*}
			\end{rem}
		
		\begin{defin}\label{defin-hck-Ran-k}For $k\geq 1$, we define
	
		$$\hck_{\Ran,k}=\colim_{I_1,\dots, I_k\in \fs^\op} ``\lim_{j\geq j_1}" \hck_{I_1,\dots,I_k}^{(j)}$$ as the colimit in the category $\PSh(\Pro_\uni(\StrStklft))=\PStrStk.$
		
		We also define $\hck_{\Ran,0}=\Spec \C$.
\end{defin}

	Let $k\in \nn, x\in X$. We define $\hck_{\Delta,k}$ and
$\hck_{x,k}$ in a similar way to \cref{notation-x-k}.

\begin{notation}
	Let $k\geq 1$. We denote by $(\Ran(X)^{\times k})_\disj$ the subfunctor of $(\Ran(X)^{\times k})$ spanned by $k$-uples of systems of points $S_1,\dots,S_k\subset X$ such that $\Gamma_{S_i}\cap \Gamma_{S_j}=\varnothing$ for all $1\leq i\neq j\leq k$.
\end{notation}
The results regarding the convolution Grassmannian imply the following:

\begin{prop}\label{factorization-Hecke-multiple}
	In the notation of \cref{factorization-property-Conv}, we have stratified equivalences
	$$\hck_{I_1,\dots,I_k}\times_{X^I}{X^{\phi}}\simeq \prod_{j\in J}\hck_{\Delta,m_j}\times_{X^{I}} X^{\phi}.$$ If $X=\A^1_\C$ we also have
	$$\hck_{\Delta,k}\simeq \hck_{x,k}\times \A^1_\C$$
	$$\hck_{I_1,\dots,I_k}\times_{X^I}{X^{\phi}}\simeq (\prod_{j\in J}\hck_{x,m_j})\times (\A^1_\C)^{\phi}.$$
	
	Finally, $$\hck_{\Ran}\times_{\Ran(X)}(\Ran(X)^{\times k})_\disj\simeq \hck_{\Ran,k}\times_{\Ran(X)^{\times k}}(\Ran(X)^{\times k})_\disj\simeq \hck_{\Ran}^{\times k}\times_{\Ran(X)^{\times k}}(\Ran(X)^{\times k})_\disj$$
	
	where in the first fiber product the map $(\Ran(X)^{\times k})_\disj\to \Ran(X)$ is the union map.
	
\end{prop}
\begin{proof}
	The first three points are straightforward from \cref{factorization-property-Conv}. The third point follows by passing to the colimit in the other ones and in \cref{factorization-property-BD}, \cref{factorization-GO}, but not immediately, since $\fs^\op$ is not filtered. The following argument, suggested by Emanuele Pavia, circumvents this problem (we explain it with $k=2$ for simplicity). Let $I_1,I_2\in \fs, I=I_1\sqcup I_2$, and denote by $(X^{I_1}\times X^{I_2})_\disj$ the open subscheme $\{(x_{I_1},x_{I_2})\in X^{I_1}\times X^{I_2}\mid  \Gamma_{x_a}\cap \Gamma_{x_b}=\varnothing\ \forall a\in I_1,b\in I_2\}$. Note that $$\colim_{I_1,I_2\in \fs^\op}(X^{I_1}\times X^{I_2})_\disj=(\Ran(X)\times\Ran(X))_\disj.$$ Now, in the diagram 
	
		\[\begin{tikzcd}
		{\hck_{I_1,I_2}\times_{X^{I_1\sqcup I_2}}(X^{I_1}\times X^{I_2})_\disj} & {\hck_{I_1}\times\hck_{I_2}} \\
		{(X^{I_1}\times X^{I_2})_\disj} & {X^{I_1}\times X^{I_2}} \\
		{(\Ran(X)\times\Ran(X))_\disj} & {\Ran(X)\times\Ran(X)}
		\arrow[from=1-1, to=1-2]
		\arrow[from=1-1, to=2-1]
		\arrow[from=1-2, to=2-2]
		\arrow[from=2-1, to=2-2]
		\arrow[from=2-1, to=3-1]
		\arrow[from=2-2, to=3-2]
		\arrow[from=3-1, to=3-2]
	\end{tikzcd}\]
	
	the top square is Cartesian (this is proven in the same way as the first part of the present Proposition), whereas the bottom square is Cartesian by straightforward verification. Therefore, the outer square is Cartesian for every $I_1,I_2\in \fs$, and by universality of colimits we get \begin{gather*}(\hck_{\Ran}\times\hck_{\Ran})\times_{\Ran(X)\times\Ran(X)}(\Ran(X)\times\Ran(X))_\disj\simeq \\ \hck_{\Ran,2}\times_{\Ran(X)\times \Ran(X)}(\Ran(X)\times \Ran(X))_\disj.\end{gather*} The other equivalence is proven in the same way.
\end{proof}

\subsection{The BD-convolution diagram as a 2-Segal object}\label{section-BD-Segal}

\begin{rem}\label{BD-convolution-diagram}
Let $k,N\in \nn,I_1,\dots, I_k\in\fs, I=I_1\sqcup\dots\sqcup I_k, j\geq j_N$. We have a diagram
	\begin{equation}\label{j-convolution}\begin{tikzcd}
		{G_{\cK,I_1,I_2}^{(N,j)}\times_{X^{I_2}}\dots\times_{X^{I_{k-1}}} G_{\cK,I_{k-1},I_k}^{(N,j)}\times_{X^{I_k}}\Gr_{I_k}^{(N)}} & {G_{\cK,I_1,I_2}^{(N,j_{N})}\times^{G_{\cO,I_2}^{(j)}}\dots\times^{G_{\cO,I_{k-1}}^{(j_{N})}} G_{\cK,I_{k-1},I_k}^{(N,j_{N})}\times^{G_{\cO,I_k}^{(j_{N})}}\Gr_{I_k}^{(N)}} \\
		{\prod_{I=1}^k\Gr_{I_i}^{(N)}} & {\Gr_{I}^{(N)}}
		\arrow["{q_{I_1,\dots,I_k}^{(N,j)}}", from=1-1, to=1-2]
		\arrow["{p_{I_1,\dots,I_k}^{(N,j)}}"', from=1-1, to=2-1]
		\arrow["{m_{I_1,\dots,I_2}^{(N)}}", from=1-2, to=2-2]
	\end{tikzcd}\end{equation}
	
	where: 
	\begin{itemize} \item the left vertical map is the projection to the quotient of the action of $\prod_{i=2}^kG_{\cO,I_i}$ (relative to $X^{I_2\sqcup\dots\sqcup I_k}$) induced by \cref{action-from-the-right};
	\item the horizontal map is the quotient map by the actions defined in \cref{action-L} (see also the definition of \cref{defin-Conv-N});
	\item the right vertical map arises as follows. Let $I_1,I_2,I_3\in \fs,I=I_1\sqcup I_2\sqcup I_3, j\geq j_1$. Then there is a map \begin{equation}\label{multiplication-BD}G_{\cK,I_1,I_2}\times_{X^{I_2}} G_{\cK,I_2,I_3}\to G_{\cK, I_1\sqcup I_2,I_3}\end{equation} which sends the datum \begin{gather*}(x_{I_1},x_{I_2}, x_{I_3}, \cF,\cG\in\Bun(X),\alpha:\cF|_{X\setminus \Gamma_{x_{I_1}}}\triv\cT|_{X\setminus \Gamma_{x_{I_1}}}, \beta:\cF|_{X\setminus \Gamma_{x_{I_2}}}\triv\cT|_{X\setminus \Gamma_{x_{I_2}}},\\ \mu:\cF|_{\widetilde X_{\Gamma_{x_{I_2}}}}\triv\cT|_{\widetilde X_{ \Gamma_{x_{I_2}}}}, \nu:\cF|_{\widetilde X_{\Gamma_{x_{I_3}}}}\triv\cT|_{\widetilde X_{ \Gamma_{x_{I_3}}}})\end{gather*} to the datum
	
	$$(x_{I_1}+x_{I_2}, x_{I_3}, \cH,\gamma:\cH|_{X\setminus \Gamma_{x_{I_1}}+\Gamma_{x_{I_2}}}\triv\cT|_{X\setminus \Gamma_{x_{I_1}}+\Gamma_{x_{I_2}}}, \nu)$$ where $\cH$ is the $G$-bundle obtained by gluing $\cF$ and $\cG$ along $\alpha|_{\mathring X_{\Gamma_{x_{I_2}}}}\circ \mu^{-1}|_{\mathring X_{\Gamma_{x_{I_2}}}}$ (this makes use of the Beauville--Laszlo theorem, cf. \cref{prop:Grloc}), and $\gamma$ is the trivialization inherited from $\alpha$ via the gluing procedure.
	
	It is easy to see that this map passes to the quotient $$G_{\cK,I_1,I_2}\times^{G_{\cO,I_2}} G_{\cK,I_2,I_3}.$$ 
	Analogously, there is a map $G_{\cK,I_1,I_2}\times \Gr_{I_2}$ which also passes to the quotient $G_{\cK,I_1,I_2}\times \Gr_{I_2}.$ These maps induce a map $$m_{I_1,\dots,I_k}:G_{\cK,I_1,I_2}\times^{G_{\cO,I_2}}\dots\times^{G_{\cO,I_k}}\Gr_{I_k}\to \Gr_{I_1\sqcup\dots\sqcup I_k}.$$ 
	These maps are compatible with the filtrations of source and target and thus restrict to maps $m_{I_1,\dots,I_k}^{(N)}$ like in \eqref{j-convolution}.

\end{itemize}
		
	 Note that $q^{(N)}_{I_1,\dots,I_k}$ and $m^{(N)}_{I_1,\dots I_k}$ do not depend on $j$. For $x\in X$, the pullback of this whole diagram along the diagonal $\Spec \C\xhookrightarrow{x} X\xrightarrow{\Delta} X^I$ is isomorphic to a diagram
	
	\begin{equation}\begin{tikzcd}
			{G_{\cK}^{(|I_1|\cdot N, j)}\times\dots\times G_{\cK}^{(|I_{k-1}|\cdot N,j)}\times\Gr^{(N)}} & {G_{\cK}^{(|I_1|\cdot N,|I_2|j)}\times^{G_{\cO}^{(|I_2|j)}}\dots\times^{G_{\cO}^{(|I_{k-1}|j)}} G_{\cK}^{(|I_{k-1}|N,|I_k|j)}\times^{G_{\cO}^{(|I_k|j)}}\Gr^{(|I_k|N)}} \\
			{\prod_{i=1}^k\Gr^{(|I_i|N)}} & {\Gr^{(|I|N)}}
			\arrow["{q_{I_1,\dots,I_k}^{(N)}}", from=1-1, to=1-2]
			\arrow["{p_{I_1,\dots,I_k}^{(N)}}"', from=1-1, to=2-1]
			\arrow["{m_{I_1,\dots,I_2}^{(N)}}", from=1-2, to=2-2]
	\end{tikzcd}\end{equation}

 	naturally generalizing \eqref{eq:convdiag}.
	
	 We also have a diagram of groups, relative to $X^I$,
	\begin{equation}\label{convolution-GO}\begin{tikzcd}
		{G_{\cO,I}^{(j)}\times_{\prod_{i=2}^kX^{I_i}}\prod_{i=2}^kG_{\cO,I_i}^{(j)}} & {G_{\cO,I}^{(j)}} \\
		{\prod_{i=1}^kG_{\cO,I_i}^{(j)}} & {G_{\cO,I}^{(j)}}
		\arrow[from=1-1, to=1-2]
		\arrow[from=1-1, to=2-1]
		\arrow[shift right, no head, from=1-2, to=2-2]
		\arrow[shift left, no head, from=1-2, to=2-2]
	\end{tikzcd}\end{equation}

where the left map is induced by the map $G_{\cO, I}^{(j)}\to G_{\cO,I_1}^{(j)}\times_{X^{I_1}}X^I$ associated to the embedding $\Gamma_{x_{I_1}}\hookrightarrow\Gamma_{x_I}$, and the horizontal map is the projection. 

\begin{lem}\label{lemma-messy-equivariance-with-BL}The vertices of \eqref{convolution-GO} act respectively on the vertices of \eqref{j-convolution}, and the maps in \eqref{j-convolution} are equivariant with respect to the maps in \eqref{convolution-GO}.\end{lem}

\begin{proof}
The only part that requires some work is equivariance of the leftmost vertical arrow. The proof is an application of the Beauville--Laszlo theorem: in a few words, modifying a trivialization $\alpha$ defined on $X\setminus\Gamma_{x_{I_1}}$ around some points not included in $x_{I_1}$ produces an equivalent datum in $\Gr_{I}$. We provide all the details of the proof here below.

First of all, by definition of the various actions, it suffices to prove the claim for $N=\infty$ and without the index $j$. Equivariance in the last $k-1$ components is straightforward, and witnesses the phenomenon that allows to ``shift'' the right multiplication action of $G_\cO$ on $G_\cK$ to the ``antidiagonal'' action of $G_\cO$ on $G_{\cK}\times G_\cK$.

We are left to check equivariance of the quotient map $G_{\cK,I_1,I_2}\to \Gr_{I_1}\times_{X^{I_1}}X^I$ with respect to the restriction map $G_{\cO,I}\to G_{\cO,I_1}\times_{X^{I_1}}X^I$. 
Let thus $x_I\in X(R)^I,g\in G(\widetilde\Gamma_{x_I}), \cF\in \Bun(X_R), \alpha:\cF|_{X_R\setminus \Gamma_{x_I}}\triv\cT|_{X_R\setminus\Gamma_{x_I}}.$ 
We want to compare the modification of the datum $(\cF,\alpha)$ by $g$ and the modification of the same datum by $g|_{\widetilde\Gamma_{x_I}}$. 
Equivalently, let us assume that $g$ restricts to the identity element on $\widetilde\Gamma_{x_{I_1}}$, so that the second modification is trivial. 

Let thus $\cG$ be the first modification, arising as the gluing of the data $$\cF|_{\widetilde\Gamma_{x_I}},\quad \cT_{X\setminus \Gamma_{x_I}}$$ along the isomorphism $g|_{\widetilde\Gamma_{x_I}}\circ \alpha|_{\widetilde\Gamma_{x_I}}$. 
Let $\beta$ be the inherited trivialization on $X_R\setminus\Gamma_{x_{I_1}}$. 
Let also $\widecheck\can$ be the canonical isomorphism between $\cG|_{X_R\setminus \Gamma_{x_I}}$ and $\cT_{X_R\setminus \Gamma_{x_I}}$, and $\widehat\can$ be the canonical isomorphism between $\cG|_{\widetilde\Gamma_{x_I}}$ and $\cF|_{\widetilde\Gamma_{x_I}}$. 
Note that by construction \begin{equation}\label{equation-canonicals}\widecheck\can|_{\mathring\Gamma_{x_I}}\simeq g|_{\mathring \Gamma_{x_I}}\alpha|_{\mathring \Gamma_{x_I}}\circ\widehat \can|_{\mathring \Gamma_{x_I}}.\end{equation}

We want to exhibit an isomorphism $\Phi$ between $\cF$ and $\cG$, commuting with $\alpha$ and $\beta$ on $X_R\setminus\Gamma_{x_{I_1}}$. We define it via the Beauville--Laszlo theorem, by assigning isomorphisms $\phi, \psi, \xi$ between the restrictions of $\cF$ and $\cG$ to the schemes $$\widetilde \Gamma_{x_{I_1}},\quad \widetilde\Gamma_{x_I}\setminus\Gamma_{x_{I_1}},\quad X_R\setminus \Gamma_{x_I} $$ respectively, and by checking that they are pairwise compatible. 

We define
\begin{gather*}\phi:\cF|_{\widetilde \Gamma_{x_{I_1}}}\xrightarrow{\widehat\can|^{-1}_{\widetilde\Gamma_{x_{I_1}}}}\cG|_{\widetilde\Gamma_{x_{I_1}}}\\
\psi:\cF|_{\widetilde\Gamma_{x_I}\setminus \Gamma_{x_{I_1}}}\xrightarrow{\widehat \can|^{-1}_{\widetilde\Gamma_{x_I}\setminus\Gamma_{x_{I_1}}}\circ\alpha|_{\widetilde\Gamma_{x_I}\setminus\Gamma_{x_{I_1}}}^{-1}\circ g^{-1}|_{\widetilde\Gamma_{x_I}\setminus\Gamma_{x_{I_1}}}\circ \alpha|_{\widetilde\Gamma_{x_I}\setminus\Gamma_{x_{I_1}}}}\cG|_{\widetilde\Gamma_{x_I}\setminus\Gamma_{x_{I_1}}}\\
\xi:\cF|_{X_R\setminus \Gamma_{x_I}}\xrightarrow{\alpha|_{X_R\setminus \Gamma_{x_I}}}\cT|_{X_R\setminus\Gamma_{x_I}}\xrightarrow{\widecheck \can|^{-1}_{X_R\setminus \Gamma_{x_I}}}\cG|_{X_R\setminus\Gamma_{x_I}}
\end{gather*}

Note that, in the definition of $\psi$, we have used that $\alpha$ is defined outside of $\Gamma_{x_{I_1}}$ and not just outside of $\Gamma_{x_{I}}$.

By using that $g|_{\widetilde\Gamma_{x_{I_1}}}$ is the identity by assumption, and the identity \eqref{equation-canonicals}, one checks that these three isomorphisms are pairwise compatible.

The verification that the resulting $\Phi$ commutes with $\alpha$ and $\beta$ can be done along the same lines.
\end{proof}
	
%
%
%
%
%
	\begin{lem}	In the notations of \eqref{j-convolution}, the map $q_{I_1,\dots,I_k}^{(N,j)}$ induces an equivalence after passing to the quotient by the actions of \eqref{convolution-GO}.\end{lem}

	\begin{proof} It suffices to check this after pulling back to the strata of $X^{I_1}\times \dots\times X^{I_k}$, and therefore by factorization it is sufficient to prove it over the diagonal, hence over a single point $x\in X$. But in that setting, $q_{I_1,\dots,I_k}^{(N,j)}$ restricts to the map \begin{equation}\label{q-on-fibers}G_{\cK}^{(|I_1|N,|I_2|j)}\times\dots\times G_{\cK}^{(|I_{k-1}|N,|I_k|j)}\times\Gr^{(|I_k|N)} \to G_{\cK}^{(|I_1|N,|I_2|j)}\times^{G_{\cO}^{(|I_2|j)}}\dots\times^{G_{\cO}^{(|I_{k-1}|j)}}G_{\cK}^{(|I_{k-1}|N,|I_k|j)}\times^{G_{\cO}^{(|I_k|j)}}\Gr^{(|I_k|N)}
		\end{equation} 
	which exhibits the target as the quotient of the source with respect to the action of $\prod_{i=2}^kG_{\cO}^{(|I_i|j)}$ inducing the twisted product on the right-hand-side. The map \eqref{q-on-fibers} is therefore equivariant with respect to the morphism of groups $G_{\cO}^{(|I|j)}\times\prod_{i=2}^kG_\cO^{(|I_i|j)}\to G_{\cO}^{(|I|j)}$ given by projection on the first component, which concludes the proof.
\end{proof}
\end{rem}

\begin{rem}\label{BD-quotient-convolution-diagram}
	
	Let $N\geq 0, j\geq j_N, I_1,\dots,I_k\in\fs$. As a consequence of the observations made in \cref{BD-convolution-diagram}, by passing to the quotient in \eqref{j-convolution} with respect to the actions listed in \cref{BD-convolution-diagram}, we obtain a diagram of stratified quotient stacks \begin{equation}\label{truncated-convolution-body}\begin{tikzcd}& \hck_{I_1,\dots, I_k}^{(N,j)}\arrow[ld, "\overline p_{I_1,\dots,I_k}^{(N,j)}"']\arrow[rd, "\overline m_{I_1,\dots,I_k}^{(N)}"]&\\
		\hck_{I_1}^{(N,j)}\times\dots\times\hck_{I_k}^{(N,j)}&&\hck_{I_1\sqcup\dots\sqcup I_2}^{(N,j)}.\end{tikzcd}\end{equation}
\end{rem}

%
%

\begin{rem}
	Recall that we have a natural map in $\PStrStk$ from $\hck_{\Ran,k}$ to $\Ran(X)^k$ (the one that only ``remembers'' the systems of points).
\end{rem}

We will now establish a semisimplicial structure on the collection of the $\hck_{\Ran,k}$'s.

\begin{construction}\label{semisimplicial-structure}
	Let $k\geq 1, 0\leq i\leq k$, and let $d_i$ be the injective ordered map $[k-1]\to [k]$ missing the index $i$. For each $I_1,\dots,I_k\in \fs, j\geq j_1,$ we have maps 
	\begin{itemize}
		\item[$i=0$)] $\delta_{0,I_1,\dots,I_k}^{(j)}:\hck_{I_1,\dots,I_k}^{(j)}\to \hck_{I_2,\dots,I_k}^{(j)}$ induced by the projection $$G_{\cK,I_1,I_2}^{(j)}\times_{X^{I_2}} G_{\cK,I_2,I_3}^{(j)}\times_{X^{I_3}}\dots\times_{X^{I_k}}\Gr^{(1)}_{I_k}\to G_{\cK,I_2,I_3}^{(j)}\times_{X^{I_3}}\dots\times_{X^{I_k}}\Gr^{(1)}_{I_k}$$ that forgets the first component.
		
		\item[$i=k$)] $\delta_{k,I_1,\dots,I_k}^{(j)}:\hck_{I_1,\dots,I_k}^{(j)}\to \hck_{I_1,\dots,I_{k-1}}^{(j)}$ induced by the composition of the projection  $$G_{\cK,I_1,I_2}^{(j)}\times_{X^{I_2}} G_{\cK,I_2,I_3}^{(j)}\times_{X^{I_3}}\dots\times_{X^{I_k}}\Gr^{(1)}_{I_k}\to G_{\cK,I_1,I_2}^{(j)}\times_{X^{I_3}}\dots\times_{X^{I_{k-1}}}G_{\cK,I_{k-1},I_k}^{(j)}$$ with the map induced by the quotient $G_{\cK,I_{k-1},I_k}^{(j)}\to \Gr_{I_k}^{(1)}$ (recall that, as usual, $G_{\cK,I_{k-1},I_k}^{(j)}$ stays for $G_{\cK,I_{k-1},I_k}^{(1,j)}$).
		
		\item[$i\neq 0,k)$] $\delta_{i,I_1,\dots,I_k}^{(j)}:\hck_{I_1,\dots,I_k}^{(j)}\to \hck_{I_1,\dots,I_i\sqcup I_{i+1},\dots,I_k}^{(j)}$
		induced by the maps $$G_{\cK,I_i,I_{i+1}}^{(j)}\times G_{\cK, I_{i+1},I_{i+2}}\to G_{\cK,I_i\sqcup I_{i+1}}^{(j)}$$ (for $i<k-1$) or $$G_{\cK,I_{k-1},I_{k}}^{(j)}\times \Gr_{\cK, I_{k}}\to \Gr_{\cK,I_{k-1}\sqcup I_{k}}^{(1)}$$ (for $i=k-1$) defined in \eqref{multiplication-BD} and following.
	\end{itemize}
	
	We can thus define a map $$\delta_i=\colim_{I_1,\dots,I_k\in \fs^\op}``\lim_{j\geq j_1}"\delta_{i,I_1,\dots,I_k}^{(j)}:\hck_{\Ran,k}\to\hck_{\Ran,k-1}.$$
\end{construction}

\begin{rem}\label{Segal-and-stratifications}
	
	The maps defined in \cref{semisimplicial-structure} are stratified. Ultimately, this can be reduced to the following statement: for any point $x\in X$, the map $$\delta_{i,x,m_j}:\hck_{x,m_j}\to\hck_{x,m_j-1}$$ 
	which glues the data in the places $i,i+1$ is stratified. This is implied by the fact that the right leg in \eqref{truncated-convolution-body} is.
\end{rem}

\begin{prop}\label{simplicial}\cref{semisimplicial-structure} defines a semisimplicial object, i.e. the given maps satisfy the simplicial identities.\end{prop}
\begin{proof}We need to prove that, for every $k\geq 1, 0\leq i<h\leq k$, there are isomorphisms $$\delta_i\delta_h\simeq \delta_{h-1}\delta_i.$$ Equivalently, we prove that the statement is true for the $\delta_{i,I_1,\dots,I_k}^{(j)}$'s, and moreover, it suffices to prove it after pulling back to strata of $X^{I}$, where $I=I_1\sqcup\dots\sqcup I_k$. This last reduction step is not completely obvious, since for instance, the equality of two morphism of schemes which are not reduced cannot, in general, be checked on strata. However, by \cite[Theorem 1.2.1]{James-GrRan}, $\Gr_\Ran$ can be written ad $$\colim_{I\in \fs^\op}\Gr_I^\red$$ and therefore passing to reductions does not change the object at the Ran level. The same argument holds for the $\hck_{\Ran,k}$, because we have a map $\hck_{\Ran,k}\to \Gr_\Ran^{\times k}$ which is a relative $G_{\cO,\Ran}$ torsor, in particular a smooth map. In other words, 
	
	\begin{itemize}\item For $I\in\fs$, $G_{\cO,I}$ is smooth by \cref{smoothness-arc-group}, hence reduced.
		\item For $I_1,I_2\in\fs$, $G_{\cK,I_1,I_2}$ is a $G_{\cO,{I_2}}$-torsor over $\Gr_{I_1}\times X^{I_2}$, relative to $X^{I_2}$. If we pull this torsor back along $\Gr_{I_1}^\red\to \Gr_{I_1}$, we obtain another $G_{\cO,I_2}$-torsor, whose source is now reduced by \'etale-localness \cite[\texttt{Tag 06QM}]{Stacks-Project}, smoothness of the fiber and reducedness of the base. By the universal property of the reduction, we get that $$G_{\cK,I_1,I_2}\times_{\Gr_{I_1}}\Gr_{I_1}^\red\simeq G_{\cK, I_1,I_2}^\red.$$
		
		\item By a similar argument, $$\Conv_{I_1,\dots,I_k}\simeq \Conv_{I_1,\dots,I_k}\times_{\Gr_{I_1}\times \dots\times \Gr_{I_k}}\Gr_{I_1}^\red\times \dots\times \Gr_{I_k}^\red.$$ 
		More precisely, we use the previous point to deduce the statement for $$G_{\cK,I_1,I_2}\times_{X^{I_2}}\dots \times_{X^{I_k}}\Gr_{I_k},$$ and then again \'etale descent for reducedness to obtain the statement for the convolution Grassmannian.
		
		\item We have \begin{gather*}(\hck^{(j)}_{I_1,\dots,I_k})^\red\simeq G_{\cO,I_1\sqcup \dots\sqcup I_k}^{(j)}\backslash \Conv_{I_1,\dots,I_k}^\red\simeq G_{\cO,I_1,\dots,I_k}^{(j)}\backslash (\Conv_{I_1,\dots,I_k}\times_{\Gr_{I_1}\times \dots\times \Gr_{I_k}}\Gr_{I_1}^\red\times \dots\times \Gr_{I_k}^\red\\
			\simeq \hck_{I_1,\dots,I_k}^{(j)}\times_{\Gr_{I_1}\times \dots\times \Gr_{I_k}}\Gr_{I_1}^\red\times \dots\times \Gr_{I_k}^\red\end{gather*}
		where we used \'etale descent for reducedness for the first equivalence, the previous point for the second equivalence, and universality of quotients for the last equivalence.
		
		\item Finally, we apply the previous point, the fact that the functor $\cc\to \Pro(\cc)$ preserves finite limits, that limits commute with limits, and again universality of colimits to obtain $$\hck_{\Ran,k}=\colim_{I_1,\dots,I_k\in \fs^\op}``\lim_{j\geq j_1} "(\hck_{I_1,\dots,I_k}^{(j)})^\red.$$
		
		\end{itemize} We can thus prove that the statement is true for the reduction of the $\delta_{i,I_1,\dots,I_k}^{(j)}$'s, and this statement can be checked on strata. 
		Moreover, by \cite[Lemma 4.2.2]{James-GrRan}, the factorization property holds for $\Gr_\Ran^\red$ as well (and with a similar proof for its variations), which allows us to use factorization also in the reduced setting.
	
	 By factorization, we reduce to two cases:
	\begin{itemize}
		\item Proving the statement over the stratum $$\cS_{I_1,\dots,I_k,\disj}=\{x_I\mid \forall i', h'\leq k, x_a\neq x_b\forall a\in I_{i'},b\in I_{h'}\}.$$
		This case follows again by factorization.
		\item Proving the statement after pullback to the diagonal $X\to X^I$. This case reduces to a statement about associativity of the multiplication on $G_\cK$ (cf. \cite[§2]{NPnote}).
		
	\end{itemize}
\end{proof}

\begin{defin}\label{HckRanbullet}
	We denote the semisimplicial object established in \cref{simplicial} by $$\hck_{\Ran,\bullet}:\dinj\to \PStrStk.$$
\end{defin}

The crucial property of this structure, in order to encode the associativity of the convolution product, is the following:

\begin{prop}\label{hck-is-2-Segal}The semisimplicial object $\hck_{\Ran,\bullet}$ enjoys the 2-Segal property, that is the equivalent conditions of \cite[Proposition 2.3.2]{DK1}.\end{prop}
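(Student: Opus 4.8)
The plan is to verify directly one of the equivalent conditions of \cite[Proposition 2.3.2]{DK1}. For a semisimplicial set $Y_\bullet$ the $2$-Segal property amounts to asking that, for every $n\geq 3$, the membrane maps
\begin{gather*}
Y_n\longrightarrow Y_{\{0,\dots,i\}}\times_{Y_{\{0,i\}}}Y_{\{0,i,\dots,n\}}\\
Y_n\longrightarrow Y_{\{0,\dots,i,n\}}\times_{Y_{\{i,n\}}}Y_{\{i,\dots,n\}}\qquad(0<i<n)
\end{gather*}
attached to the diagonals of the $(n+1)$-gon through the two extreme vertices be bijections (the degenerate cases $i=1$ on the first line and $i=n-1$ on the second being automatically identities). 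The first step is to write these maps out for $Y_\bullet=\FactGr_\bullet(R)$ using the moduli description of \cref{factorising-Grassmannian} and the face maps of \cref{semisimplicial-structure}; since by \cref{lgk-discrete} every presheaf involved is set-valued, the fibre products here are honest fibre products of sets and no higher coherence is at stake.

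Unwinding, each such membrane map has one ``restriction'' leg and one ``merging'' leg. The restriction leg sends a tuple $(S,\cF_1,\alpha_1,\mu_1,\dots,\cF_n,\alpha_n)$ to the sub-tuple indexed by an initial or a final segment of the $\cF_j$, discarding the other bundles together with the trivialisations that then lose their meaning; because, by \cref{factorising-Grassmannian}, $\FactGr_k$ is \emph{by construction} the iterated fibre product over $\Ran(X)$ of $k-1$ copies of $\Ran G_\cK$ and one copy of $\gran$, with no condition linking consecutive factors, this leg recovers all of its data tautologically. The merging leg glues a consecutive block $\cF_{a+1},\dots,\cF_b$ into a single bundle by the operation $\Fgl$ of \cref{semisimplicial-structure}, and this is where the Formal Gluing Theorem \cite{HPV} does the work: it makes $\Fgl$ well defined and equips the glued bundle with \emph{canonical} identifications with $\cF_{a+1}$ over $X_R\setminus\Gamma_S$ and with $\cF_b$ over $\widehat{(X_R)}_{\Gamma_S}$ --- equivalently, $\bun(-)$ is a sheaf on $X_R$ for the cover $\{X_R\setminus\Gamma_S,\ \widehat{(X_R)}_{\Gamma_S}\}$.

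Granting this, I would establish bijectivity of each membrane map by an explicit reconstruction. From a pair of tuples lying in the fibre product, the bundles $\cF_j$ and the $\alpha_j$, and all but at most one of the $\mu_j$, are read off tautologically from one or the other tuple; the single remaining trivialisation $\mu_i$, which survives only as the formal-germ trivialisation of the merged bundle, is then recovered from it through the canonical identification above (in the ``upper'' family no $\mu_j$ is even lost, so that case is still more immediate). The gluing constraint imposed by the fibre product says precisely that the merge of the reconstructed block equals the bundle recorded in the pair, so the reconstructed tuple is the unique preimage; that this assignment is well defined and two-sided inverse is the combination of the simplicial identities of \cref{simplicial} with the sheaf property just recalled, and passing from $n=3$ to general $n$ is a routine induction on the length of the merged block. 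The main obstacle is therefore not conceptual but organisational: tracking exactly which $\mu_j$ is dropped by each restriction and where it reappears, along chains of formal gluings all taking place at the single point-system $\Gamma_S$, and running the two diagonal families through uniformly. Beyond \cite{HPV}, the only further ingredient is the rigidity of \cref{lgk-discrete}, which is what lets all the descent squares be treated as fibre products of sets rather than of groupoids.
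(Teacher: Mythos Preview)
Your approach is correct and essentially identical to the paper's: both construct an explicit inverse to the membrane map by reading off all the $\cF_j,\alpha_j$ and almost all the $\mu_j$ tautologically, and then recovering the one ``missing'' formal trivialisation $\mu_i$ from the formal-germ trivialisation on the merged bundle via the canonical identification supplied by the Formal Gluing Theorem (the paper writes this out as $\mu_l'':\cF_l|_{\widehat{(X_R)}_{\Gamma_S}}\xrightarrow{\sim}\Fgl(\dots)|_{\widehat{(X_R)}_{\Gamma_S}}\xrightarrow{\xi^{-1}}\cF'_l|_{\widehat{(X_R)}_{\Gamma_S}}\xrightarrow{\mu'_l}\cT$). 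One small slip: your parenthetical has the two families swapped --- it is the \emph{second} (``$\{i,n\}$'') family in which every $\mu_j$ sits directly on its original $\cF_j$ and nothing needs to be transported, whereas in the first (``$\{0,i\}$'') family $\mu_i$ lives on the merged bundle and must be pulled back through the identification, exactly as in the paper's displayed formula.
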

\begin{proof}We will prove the case $k=3$, for simplicity. First of all, we prove that we can reduce to proving that the map \begin{equation}\label{truncated-segal}
	\hck_{I_1,I_2,I_3}^{(j)}\to \hck_{I_2,I_3}^{(j)}\times_{\hck_{I_2\sqcup I_3}^{(j)}}\hck_{I_1,I_2\sqcup I_3}^{(j)}\end{equation} is an equivalence for every $I_1,I_2,I_3,j\geq j_1$, where the maps in the pullback are respectively $(\delta_{1,I_2,I_3}^{(j)}\circ \delta_{0,I_1,I_2,I_3}^{(j)})$ and $\delta_{2,I_1,I_2, I_3}^{(j)}$. Indeed, since limits commute with limits, we only need to prove that we can check the statement at the level of pro-objects (i.e. before passing to colimits in $I_1,I_2,I_2\in \fs^\op$). Consider the square

	\begin{equation}\label{universality}\begin{tikzcd}
		{\hck_{\Ran,I_2,I_3}} & {\hck_{\Ran,I_2\sqcup I_3}} & {\hck_{\Ran,2}} \\
		{\hck_{I_2, I_3}} & {\hck_{I_2\sqcup I_3}} & {\hck_{\Ran}}
		\arrow[from=1-1, to=1-2]
		\arrow[from=1-1, to=2-1]
		\arrow[from=1-2, to=1-3]
		\arrow[from=1-2, to=2-2]
		\arrow[from=1-3, to=2-3]
		\arrow[from=2-1, to=2-2]
		\arrow[from=2-2, to=2-3]
	\end{tikzcd}\end{equation}
	 where $\hck_{\Ran,I_2,I_3}=\colim_{I_1\in \fs^\op}\hck_{I_1,I_2,I_3}$ and similarly for $\hck_{\Ran,I_2\sqcup I_3}$. If we assume the statement about \eqref{truncated-segal}, then the left-hand square is Cartesian by universality of colimits. The right-hand square is also Cartesian, because front, bottom and back faces of the commutative cube 
	
		\[\begin{tikzcd}
		& {\hck_{\Ran,I_2\sqcup I_3}} && {\hck_{\Ran,2}} \\
		{\hck_{I_2\sqcup I_3}} & {} & {\hck_{\Ran}} & {} \\
		{} & {\Ran(X)\times X^{I_2\sqcup I_3}} && {\Ran(X) \times \Ran(X)} \\
		{X^{I_2\sqcup I_3}} & {} & {\Ran(X)}
		\arrow[from=1-2, to=1-4]
		\arrow[from=1-2, to=2-1]
		\arrow[from=1-2, to=3-2]
		\arrow[from=1-4, to=2-3]
		\arrow[from=1-4, to=3-4]
		\arrow[from=2-1, to=2-3]
		\arrow[from=2-1, to=4-1]
		\arrow[from=2-3, to=4-3]
		\arrow[from=3-2, to=3-4]
		\arrow[from=3-2, to=4-1, "\textup{pr}_2"]
		\arrow[from=3-4, to=4-3, "\textup{pr}_2"]
		\arrow[from=4-1, to=4-3]
	\end{tikzcd}\]

	are Cartesian. Hence the outer square in \eqref{universality} is Cartesian, and by universality of colimits in a category of presheaves we conclude.
	
	Let us thus prove the statement for the $\delta^{(j)}_{I_1,\dots,I_k}$'s. By factorization (and the workaround including passing to reductions explained in the proof of \cref{simplicial}), it suffices to:
	
	\begin{itemize}
		\item prove the statement after restricting to the stratum $$\cS_{I_1,I_2,I_3,\disj}=\{x_{I_1},x_{I_2},x_{I_3}\mid x_a\neq x_b\forall a\in I_2,b\in I_2\textup{ or }a\in I_2,b\in I_3\textup{or }a\in I_1,b\in I_3\}.$$ There, by \cref{factorization-property-BD}, \cref{factorization-GO} and \cref{factorization-Hecke-multiple}, the statement is equivalent to proving that the square
		
		\[\begin{tikzcd}
			{(\hck_{I_1}^{(j)}\times\hck_{I_2}^{(j)}\times \hck_{I_3}^{(j)})|_{\cS_{I_1,I_2,I_3, \disj}}} & {(\hck_{I_1}^{(j)}\times\hck_{I_2}^{(j)}\times \hck_{I_3}^{(j)})|_{\cS_{I_1,I_2,I_3,\disj}}} \\
			{(\hck_{I_2}^{(j)}\times \hck_{I_3}^{(j)})|_{\cS_{I_2,I_3,\disj}}} & {(\hck_{I_2}^{(j)}\times \hck_{I_3}^{(j)})|_{\cS_{I_2,I_3,\disj}}}
			\arrow[shift left, no head, from=1-1, to=1-2]
			\arrow[no head, from=1-1, to=1-2]
			\arrow[from=1-1, to=2-1]
			\arrow[from=1-2, to=2-2]
			\arrow[no head, from=2-1, to=2-2]
			\arrow[shift left, no head, from=2-1, to=2-2]
		\end{tikzcd}\]
		
		is Cartesian, which is of course true.
		
		\item prove the statement after restriction to the diagonal. We can as well assume $|I_1|=|I_2|=|I_3|$, up to considering suitable surjections $I_i'\to I_i, i=1,2,3$. Then we need to prove that, for any $N\in \nn, j\geq j_N$, the diagram 
		\[\begin{tikzcd}
			{G_{\cO}^{(j)}\backslash G_{\cK}^{(N,j)}\times^{G_{\cO}^{(j)}}G_{\cK}^{(N,j)}\times^{G_{\cO}^{(j)}}\Gr^{(N)}} & {G_{\cO}^{(j)}\backslash G_{\cK}^{(N,j)}\times^{G_{\cO}^{(j)}}\Gr^{(2N)}} \\
			{G_{\cO}^{(j)}\backslash G_{\cK}^{(N,j)}\times^{G_{\cO}^{(j)}}\Gr^{(N)}} & {G_{\cO,j}\backslash \Gr^{(2N)}}
			\arrow["{m_{23}}"', from=1-1, to=1-2]
			\arrow["{p_{23}}", from=1-1, to=2-1]
			\arrow["{p_2}"', from=1-2, to=2-2]
			\arrow["m", from=2-1, to=2-2]
		\end{tikzcd}\]
		
		is Cartesian (note that here we do need $N$ to be arbitrary, since pulling back to diagonals takes into account the cardinalities of the $I_i$'s). This latter claim is true by \cite[Proposition 2.3]{NPnote} with $H=G_\cK, K=G_\cO$.
	\end{itemize}
\end{proof}

\begin{thm}\label{hck-algebra-correspondences}
	Let $G$ be a complex reductive group and $X$ a complex smooth curve. The object $$\hck_{\Ran}\in\PStrStk$$ from \cref{HckRan}
	carries a nonunital associative algebra structure in $\Corr(\PStrStk)^\times$, extending the convolution diagram \eqref{BD-quotient-convolution-diagram}.
	
	The fiber of this algebra structure at any singleton $\{x\}\in \Ran(X)$ encodes the convolution diagram for the Hecke stack (\cref{quotient-convolution-diagram}) and its associativity.
\end{thm}
\begin{proof}It suffices to apply \cite[Proposition 8.1.5]{DK1} (or, more precisely, its nonunital part) to the 2-Segal semisimplicial object $\hck_{\Ran,\bullet}$ (\cref{semisimplicial-structure}, \cref{hck-is-2-Segal}).\end{proof}


\section{Topological factorization of the Hecke stack}\label{Section-3}

\subsection{Consequences of analytification}\label{Section-3.1}

\begin{defin}\label{Ran-space-topology} Let $M$ be a topological manifold of dimension $m$. The \textit{Ran space} of $M$ is the set of nonempty finite subsets of $M$, endowed with its so-called \textit{metric topology}, i.e. the topology induced by the following base: $$
	\{\prod_{i=1}^k\Ran(D_i)\mid \{D_i\} \textup{ finite family of pairwise disjoint disks in } M\}.$$
\end{defin}
Note that the given family is actually a family of subsets of $\Ran(M)$, in the sense that the union map $\prod_{i=1}^k\Ran(D_i)\to \Ran(M)$ is injective whenever the disks are pairwise disjoint. We denote by $\star_i \Ran(D_i)$ the image of this map.

\begin{rem}\label{two-Rans}The set $\Ran(M)$ can be equivalently presented as the $\colim_{I\in\fs} M^I$ in $\sets$. This carries a natural colimit topology, which is finer than the one presented above. For a thorough comparison between those, and many further considerations, see \cite{Cepek-Lejay} and \cite{Sylvain-Ran}. When unspecified, by $\Ran(M)$ we will always mean the Ran space with its metric topology.\end{rem}

One of the main features of the Ran space is to encode the notion of \textit{factorization algebra} in a particularly efficient way. Actually, a theorem by Jacob Lurie says even more, showing that a family over the Ran space with some conditions induces a ``sheaf of $\E_m$-algebras'' over $M$ ($m$ being the dimension of $M$).

\begin{thm}[{part of \cite[Theorem 5.5.4.10]{HA}}]Let $M$ be a topological manifold of dimension $m$, and $\cc^\otimes$ a symmetric monoidal $\infty$-category where the tensor product preserves colimits separately in both variables. Any \textup{constructible factorizable} cosheaf over $\Ran(M)$ with values in $\cc$ induces a nonunital $\E_m$-algebra structure on its stalk at any singleton $\{x\}\in \Ran(M)$.
\end{thm}

\begin{recall}\label{recall-factorizing-constructible}We refer to \cite[Section 5.5.4]{HA} for the definitions. Here we just recall that:
	\begin{itemize}\item two open subsets $U,V$ of $\Ran(M)$ are declared to be \textit{independent} if for every $S\in U,T\in V$, then $S\cap T=\varnothing$. For example, for any two open subsets $D,D'$ of $M$, $\Ran(D)$ and $\Ran(D')$ are independent if and only if $D\cap D'=\varnothing$. For $U,V$ independent open subsets of $\Ran(M)$, one denotes $U\star V=\{S\sqcup T\mid S\in U,T\in V\}$. Again, this is homeomorphic to $U\times V$.
		\item  there is an operadic structure $\Fact$ over the category of open subsets of $\Ran(M)$, encoding the ``partial operation'' $U\star V$ for independent $U,V$.
		
		\item a \textit{factorizable cosheaf} over $\Ran(M)$ is a map of operads $$A^\otimes:\Fact\to \cc^{\otimes}$$ satisfying the following conditions:
		\begin{itemize}\item the underlying functor $A: \Open(\Ran(M))\to \cc$ is a cosheaf;
			\item for any two independent open subsets $U,V$ of $\Ran(M)$, the map $A(U)\otimes^\cc A(V)\to A(U\star V)$ induced by the fact that $A$ is a map of operads is an equivalence in $\cc$.
		\end{itemize}
		\item such a functor is \textit{constructible} if, as a cosheaf over $\Ran(M)$ with values in $\cc$, it is hypercomplete and satisfies the following (cf. \cite[Proposition 5.5.1.14]{HA}). Let $\{D_i\}_{i\in I}$, $\{D_i'\}_{i\in I}$ be finite sequences of disks in $M$ such that $D_i\cap D_j=D_i'\cap D_j'=\varnothing$ for $i\neq j$ and $D_i\subset D_i'$. Then $A$ sends the induced map \begin{equation}\label{inclusion-of-disks}\star_{i}\Ran(D_i)\hookrightarrow\star_i\Ran(D'_i)\end{equation} into an equivalence in $\cc$.
	\end{itemize}
\end{recall}

Recall the definition of the functor $$(-)^\an:\PStrStk\to \PStrTStk,$$ from \cref{big-stratified-analytification}. By construction, this functor preserves finite limits. By composing the semisimplicial object $\hck_{\Ran,\bullet}$ from \cref{HckRanbullet} with $(-)^\an$ we obtain a semisimplicial object $\hck_{\Ran,\bullet}^\an$, which is again 2-Segal because $(-)^\an$ preserves finite limits. This object comes with a natural map towards $\Ran(X)^\an$ with the colimit topology (by functoriality) and therefore towards $\Ran(X^\an)$ with the metric topology described in \cref{Ran-space-topology}. For instance, if we consider $\hck^\an_{\Ran,1}$, this is given by 
$$\colim_{I\in\fs^\op} ``\lim"_{j\geq j_1}(G_{\cO,I}^{(j)})^\an\backslash(\Gr_{G,I}^{(1)})^\an.$$
Note that the single terms appearing in this formula are quotients in the category of stratified topological stacks (\cref{stratified-topological-stacks}).

From now on, when proving a property for $\hck^\an_{\Ran,k}$ we will always argue as ``we prove the property for each term and the property is stable under the relevant colimits and formal limits''.


\begin{defin}\label{cosheaf}For any $U\subset \Ran(X^\an)$ open subset, and $k\geq 1$, let 
	\begin{gather*}\Conv_{U,k}=\Conv_{\Ran,k}^\an\times_{\Ran(X^\an)^k}U^k\in\PStrat\\
	\hck_{U,k}=\hck_{\Ran,k}^\an\times_{\Ran(X^\an)^k}U^k\in\PStrat.\end{gather*} We define functors
	\begin{gather*}\Open(\Ran(M))\times\dinj\to \PStrat\\
	(U,[k])\mapsto \Conv_{U,k}\end{gather*}
	and 
	\begin{gather*}\Open(\Ran(M))\times\dinj\to \PStrat\\
	(U,[k])\mapsto \hck_{U,k}\end{gather*} on objects, and in the natural way on morphisms.
\end{defin}
Indeed, an inclusion $U\subset V$ naturally induces embeddings \begin{gather*}\Conv_{U,k}\hookrightarrow \Conv_{V,k}\\
	\hck_{U,k}\hookrightarrow\hck_{V,k}\end{gather*} which are clearly natural in $k$.
\begin{rem}\label{D^I}
	Note that by universality of colimits
	
	$$\Conv_{U,k}\simeq \colim_{I_1,\dots,I_k}\Conv^\an_{I_1,\dots,I_k}\times_{\Ran(X^\an)^k}U^k.$$
	resp. 
	$$\hck_{U,k}\simeq \colim_{I_1,\dots,I_k}\hck^\an_{I_1,\dots,I_k}\times_{\Ran(X^\an)^k}U^k.$$ We denote the terms appearing in the colimit by $\Conv_{U,I_I,\dots,I_k}$ resp. $\hck_{U,I_1,\dots,I_k}$ or, when $U=\Ran(D)$ for a disk $D$, by $\Conv_{D^{I_1}, \dots, D^{I_k}}$ resp. $\hck_{D^{I_1}, \dots, D^{I_k}}$, to stress the fact that each of them is equivalent to $$\Conv_{I_1,\dots,I_k}^\an\times_{X^{I_1\sqcup \dots\sqcup {I_k}}}D^{I_1\sqcup\dots\sqcup I_k}$$ resp. $$\hck_{I_1,\dots,I_k}^\an\times_{X^{I_1\sqcup \dots\sqcup {I_k}}}D^{I_1\sqcup\dots\sqcup I_k}.$$
\end{rem}
\cref{cosheaf} is the first step in the direction of building a factorization algebra out of $\hck_{\Ran,\bullet}^\an$. The following step is to upgrade $\hfb$ to a functor $$\Fact\times\dinj\to \PStrat^\times$$ suitably encoding the ``factorization property'' of the Beilinson--Drinfeld Grassmannian.

\begin{notation}
	Let $M$ be a topological manifold. We denote by $$(\Ran(M)^k\times\Ran(M)^k)_{\disj}$$ the topological space $$\{S_1,\dots,S_k,T_1\dots,T_k\mid S_i\cap T_i=\varnothing,i=1,\dots,k\}\subset\Ran(M)^k\times\Ran(M)^k.$$
\end{notation}

\begin{rem}\label{fusion-map}
	Consider two independent open subsets $U$ and $V$ of $\Ran(X^\an)$. We have the following diagram
	\begin{equation}\label{factprop}\begin{tikzcd}[column sep="0.7cm"]
			{\hck_{U,k}\times \hck_{V,k}} \arrow[d,"\pi"] \arrow[r] & {\hdis^\an} \arrow[r] \arrow[d]          & {\hrk^\an} \arrow[d] \\
			U^k\times V^k \arrow[r, "\subset"]                                            & (\Ran(X^\an)^k\times\Ran(X^\an)^k)_\disj \arrow[r, "\union"] & \Ran(X^\an)^k      ,                
	\end{tikzcd}\end{equation}
	where the left hand square is by definition a pullback in $\PStrat$, and the right top horizontal map is induced by \cref{semisimplicial-structure} and factorization with $k=2$, and then by applying $(-)^\an$. 
	Note that by \cref{big-stratified-analytification}, $(-)^\an$ preserves finite limits. 

	In concrete, this map is induced by the maps \begin{gather*}\Gr_{I_1}\times\Gr_{I_2}|_{(X^{I_1}\times X^{I_2})_\disj}\to \Gr_{I_1\sqcup I_2}\\ (x_{I_1},\cF,\alpha,x_{I_2},\cG,\beta)\mapsto (x_{I_1}+x_{I_2},\cH, \gamma)\end{gather*} where $\cH$ is the gluing of $\cF$ and $\cG$ along the isomorphism $\beta^{-1}|_{X\setminus \Gamma_{x_{I_1}+x_{I_2}}}\circ \alpha|_{X\setminus \Gamma_{x_{I_1}+x_{I_2}}}$ and $\gamma$ is the trivialization inherited from $\alpha$ and $\beta$ via the gluing procedure.
	
	Note also that the bottom composition in \eqref{factprop} coincides with $U\times V\to U\star V\hookrightarrow\Ran(X^\an)$, the first map being the one taking unions of systems of points; hence, by the universal property of the fibered product, $\hck_{U,k}\times\hck_{V,k}$ admits a map towards $\hck_{U\star V,k}$, which we call $\chi_{U,V,k}$.
\end{rem}

\begin{prop}\label{op-k}Let $k\in \mathbb N$. There is a well-defined map of operads $$\hck^\fact_k:\Fact\to\PStrat^\times$$ 
	assigning $$(U_1,\dots, U_m)\mapsto \hck_{U_1,k}\times\dots\times \hck_{U_m,k}.$$
	$$(U\subset V)\mapsto [\hck_{U,k}\hookrightarrow \hck_{V,k}]$$
	$$((U,V)\to (U\star V))\mapsto \chi_{U,V,k}:\hck_{U,k}\times \hck_{V,k}\to \hck_{U\star V, k}.$$
\end{prop}
\begin{proof}To prove that this is this association is functorial, it suffices to prove that the diagram $$
	\begin{tikzcd}[column sep=-1.5cm, row sep=1cm]
		& \Conv_{U\star V, k}\times\Conv_{W, k} \arrow[rd, "{\chi_{U\star V,W,k}}"]  &                            \\
		\Conv_{U,k}\times\Conv_{V,k}\times \Conv_{W,k} \arrow[rd, "{\id\times \chi_{V,W,k}}"'] \arrow[ru, "{\chi_{U,V,k}\times \id}"] &                                                                       & \Conv_{U\star V\star W,k} \\
		& \Conv_{U,k}\times\Conv_{V\star W, k} \arrow[ru, "{\chi_{U,V\star W,k}}"'] &                           
	\end{tikzcd}$$
	(notations as in \cref{cosheaf}) commutes in $\PStrat$. Now this is true because the operation of gluing is associative, as it is easily checked by means of the defining property of the gluing of sheaves.\\
	Finally, to prove that the functor $\hck_k^\fact$ is a map of operads, we use the characterization of inert morphisms in a Cartesian structure provided by \cite[Proposition 2.4.1.5]{HA}. Note that:
	\begin{itemize}
		\item An inert morphism in $\Fact$ is a morphism of the form $$(U_1,\dots, U_m)\to (U_{\phi^{-1}(1)},\dots U_{\phi^{-1}(n)})$$ covering some inert arrow $\phi:\langle m \rangle \to \langle n \rangle$ where every $i\in\langle n\rangle^\circ$ has exactly one preimage $\phi^{-1}(i)$. 
		\item An inert morphism in $\PStrat^{\times}$ is a morphism of functors $\overline \alpha$ between $f:\mathscr P(\langle m\rangle^\circ)^\op\to \PStrat$ and $g:\mathscr P(\langle n\rangle^\circ)^\op\to\PStrat$, covering some $\alpha:\langle m\rangle \to \langle n \rangle$, and such that, for any $S\subset \langle n \rangle$, the map induced by $\overline \alpha$ from $f(\alpha^{-1}S)\to g(S)$ is an equivalence in $\PStrat$.
	\end{itemize}
	By definition, $\hck^\fact_k(U_1,\dots,U_m)$ is the functor $f$ assigning $$T\subset \langle m\rangle^\circ\mapsto \prod_{j\in T}\hck_{U_j,k},$$ and analogously $\hck^\fact_k(U_{\phi^{-1}(1)},\dots,U_{\phi^{-1}(m)})$ is the functor $g$ assigning $$S\subset \langle n\rangle^\circ\mapsto \prod_{i\in S}\hck_{U_{\phi^{-1}(i)},k}.$$ But now, if $\alpha=\phi$ and $T=\phi^{-1}(S)$, we have the desired equivalence.
	
\end{proof}

For the following result, we specialize to the case $X=\Ac$. Consider the class $\MDisk(\C)$ of metric balls in $\Ac$, i.e. open subspaces of the form $$\{z\in (\Ac)^\an\mid|z-z_0|<r\}$$ for $z_0\in (\Ac)^\an, r\in \rr_{>0}.$
Let $D_i'\subset D_i, i=1,\dots,n$, be elements in $\MDisk(\C)$. 
There is an inclusion \begin{equation}\label{metric-balls-inclusion}\star_{i=1}^n\Ran(D'_i)\subset\star_{i=1^n} \Ran(D_i)\end{equation} which in turn induces a map $$\prod_{i=1}^n\hck_{\Ran(D_i')}\to\prod_{i=1}^n \hck_{\Ran(D_i)}$$ in $\PStrat$.

Recall the notion of stratified homotopy equivalence ($\she$ and $\pshe$) from \cref{defin-subm-prestacks}.

\begin{prop}\label{factorization+homotopy-invariance}Let $X=\Ac$, $k\in \mathbb N$. The functor $\hf_k$ from \cref{op-k} satisfies the factorization condition (see \cref{recall-factorizing-constructible}) and sends maps of the form \eqref{metric-balls-inclusion} to $\pshe$ in $\Mor(\PStrat)$.
\end{prop}

\begin{proof}Recall \cref{big-stratified-analytification} and \cref{factorization-Hecke-multiple} for $k=2$. The diagram $$
	\begin{tikzcd}
		(\Ran(X)^\an\times \Ran(X)^\an)_{\disj} \arrow[r] \arrow[d] & \Ran(X)^\an\arrow[d] \\
		(\Ran(X^\an)\times \Ran(X^\an))_\disj \arrow[r]            & \Ran(X^\an)          
	\end{tikzcd}$$ (where in the first row we are considering the colimit topology and in the second row we are considering the metric topology) is Cartesian. We thus have the factorization condition by pulling back everything along $U^k\times V^k\to (\Ran(X^\an)^{k}\times\Ran(X^\an)^k)_\disj$.

	As for the second property, by using factorization we reduce to the case of a single inclusion of disks $i:D'\subset D$. That is, we need to prove that the induced map $$\hck_{\Ran(D'), k}\to \hck_{\Ran(D),k}$$ is a stratified homotopy equivalence in $\PStrat$. This amounts to proving that for each $I_1,\dots,I_k\in \fs,N\in\nn$, the maps 
	\begin{gather*}\Conv_{(D')^{I_1},\dots, (D')^{I_k}}^{(N)}\to \Conv_{D^{I_1},\dots,D^{I_k}}^{(N)}\\
	G_{\cO,(D')^{I_1}}\to G_{\cO,D^{I_1}}\end{gather*}
	 are resp. stratified homotopy equivalences and homotopy equivalences, and in a compatible way (i.e. the homotopy inverse of the first map is equivariant with respect to the second one, and one can choose homotopies which are compatible with respect to the actions on source and target). A sketch of the proof for the first one, in the case $k=1$, is given in \cite[Proposition 3.17]{HY}. The case $k=1$ together with the group case and equivariance are proven in \cite[Theorem A, Theorem C]{WM}. The case $k>1$ is proven \textit{verbatim} with the same argument as \cite[Theorem A, Theorem C]{WM}.
\end{proof}

\begin{rem}\label{not-cosheaf}
From the properties of \cref{recall-factorizing-constructible}, we did not prove that the association $U\mapsto \hck_{U,k}$ is a hypercomplete\footnote{Since alle categories involved are ordinary categories, the adjective ``hypercomplete'' is redundant. It will become relevant after taking constructible cosheaves, see \cref{hypercompleteness}.} cosheaf. This property becomes true after taking categories of constructible sheaves, that is after applying the functor $\Cons$ from \cref{take-constructibles-final}: see \cref{hypercompleteness}.
Note that for $I_1,\dots,I_k\in \fs, j\geq j_1$, the functor $U\mapsto \hck_{I_1,\dots,I_k}^{(j)}$ is indeed a hypercomplete cosheaf by \cref{lemma-hypercompleteness}. However, this does not trivially extend to the functor $U\mapsto \hck_{I_1,\dots,I_k}$, because cofiltered limits do not commute with sifted colimits and hence one cannot formally transfer descent to the level of pro-objects. 

Note as well that we only proved that maps of the form \eqref{metric-balls-inclusion} (and not \eqref{inclusion-of-disks}) are sent to stratified homotopy equivalences, which are not equivalences in $\PStrTStk$. However, stratified homotopy equivalences are sent to equivalences of $\infty$-categories under $\Cons(-;\cE)$, the functor which takes constructible sheaves with coefficients in a symmetric monoidal presentable stable $\infty$-category $\cE$ (\cref{homotopy-invariance-of-cons}). Also, proving the property for maps of the form \eqref{metric-balls-inclusion} instead of \eqref{inclusion-of-disks} is sufficient for our purposes, as we will see in the proof of \cref{Sph-is-factorizable-constructible}.
\end{rem}

\begin{rem}\label{fusion-and-faces}The constructions performed in the proof of \cref{op-k} are compatible with the face maps of $\hck^\an_{\Ran,\bullet}$ defined in \cref{semisimplicial-structure}, since for any $k\in \dinj, i=1,\dots k$, the square $$
	\begin{tikzcd}
		\hdis \arrow[d, "\delta_i\times \delta_i"] \arrow[r] & \hrk \arrow[d, "\delta_i"] \\
		(\hck_{\Ran,k-1}\times \hck_{\Ran,k-1})_\disj \arrow[r]       & \hck_{\Ran,k-1}.
	\end{tikzcd}$$ commutes.
\end{rem}








Therefore:

\begin{thm}\label{hck-fact}Let $G$ be a reductive complex group and $X=\Ac$. \cref{op-k} induces a well-defined map of operads $$\hck^\fact:\Fact\times \E_1^\nun\to \Corr(\PStrat)^\times$$ such that: \begin{itemize}\item in the first variable, it satisfies the factorization property in the sense of \cref{recall-factorizing-constructible}, and sends maps of the form \eqref{metric-balls-inclusion} to stratified homotopy equivalences;
		\item for every open $U\subset \Ran(X^\an)$, the restriction to $\{U\}\times \E_1^\nun$ yields the map of operads defined in \cref{hck-algebra-correspondences}, analytified and pulled back from $\Ran(X^\an)$ to $U$.\end{itemize}
\end{thm}
\begin{proof}\cref{fusion-and-faces} yields a functor $$\Fact\times\dinj\to \PStrat^\times$$ which is a map of operads in the first variable. 
In the second variable, the functor satisfies the 2-Segal condition in the same sense as the functor in \cref{hck-is-2-Segal}, since the extended stratified analytification functor preserves finite limits (\cref{big-stratified-analytification}).

This in turn induces a functor\footnote{The notation $\Map_{\textup{Op}_\infty}$ follows \cite{HA} and stays for ``maps of $\infty$-operads''.}$$\dinj\to\Map_{\textup{Op}_\infty}(\Fact,\PStrat^\times)$$ which again satisfies the 2-Segal condition. By invoking \cite[Proposition 8.1.5]{DK1} we obtain a map of operads $$\E_1^\nun\to \Corr(\Map_{\textup{Op}_\infty}(\Fact,\PStrat^\times))^\times.$$ Note that the target admits a map of operads to $$\Map_{\textup{Op}_\infty}(\Fact,\Corr(\PStrat)^\times)^\times$$ (it is easy to provide a map towards the category of functors, and then one can check that it actually takes values in the full subcategory spanned by maps of operads). Finally, we get a functor $$\Fact\times\E_1^\nun\to \Corr(\PStrat)^\times$$ which is a map of operads separately in both variables. The verification that the claimed properties hold is straightforward by restricting to the two separate variables. \end{proof}


\subsection{Setup for taking constructible sheaves}\label{subsection-setup-constructibles}

In the next section we will take constructible sheaves over the geometric objects introduced up to now and prove that this induces the sought-after $\E_3$-algebra structure on $\Sph(G)$. The first step, carried out in the present subsection, will be to check that the mentioned geometric objects (and maps between them) indeed belong to the source category of \cref{take-constructibles-final}.

\begin{rem}\label{stratifying-poset-is-finite}
	For $G=\GL_n, k\geq 1,I_1,\dots,I_k\in \fs,N\in \nn$ fixed, the stratifying poset of $\Conv_{I_1,\dots,I_k}^{(N)}$ is given, with the notations of \cref{BD-stratifications}, by $$\bigg\{J, [\phi:I_1\sqcup\dots\sqcup I_k\twoheadrightarrow J], (\nu_j^{h_i^j})_{j\in J,i=1,\dots,m_j}\in \prod_{j\in J}(\xt)^{m_j}\mid\sum_{j,\phi^{-1}(j)\cap I_h\neq\varnothing}\nu_j^{h}\leq (N,\dots,N)\ \forall h=1,\dots,k\bigg\}$$ which is finite, the bounds being induced by $N$ and the cardinality of $I_1\sqcup\dots\sqcup I_k$. For the case of a general $G$, choose a faithful representation $\rho:G\to \GL_n$: the bounds on coweights are induced by the bounds inherited from the case of $\GL_n$ (they may depend on $\rho$, but this is not an issue).
	
\end{rem}

\begin{prop}\label{BD-is-Whitney}
	Let $I\in \fs, N\in \nn$. The stratified space $(\Gr_{I}^{(N)})^\an$ belongs to the category $\Con$ from \cref{defin-con}.
\end{prop}
\begin{proof} It suffices to prove that the stratification is Whitney. Indeed, this implies that the stratification is conical in the sense of \cref{defin-con}, since strata are smooth manifolds and possess tubular neighbourhoods. From this, we also obtain that the conical neighbourhoods of each point can be chosen to be contractible: hence, the space is locally contractible (which implies that it is of singular shape by \cite[Lemma A.4.14]{HA}). The existence of tubular neighbourhoods has been proven by Mather \cite{TopStab}. Together with Marco Volpe, we provided an explicit reformulation in the language of conically stratified spaces \cite[Construction 3.4]{Whitney}. 
	
	The fact that $(\Gr_I^{(N)})^\an$ is Whitney is proven in \cite[Proposition 4.5.1]{Nadler-Perverse-real}.\end{proof}

\begin{lem}\label{bundle-stability-of-Whitney}
	Let $(X,s)\to (Y,t)$ be a map of stratified spaces embedded in $\rr^N$ for some $N$. Suppose that the map is surjective, and that it is a smooth stratified submersion in the sense of \cref{defin-smooth}. Then $(X,s)$ satisfies the Whitney conditions if an only if $(Y,t)$ does.
\end{lem}
 \begin{proof}Since the Whitney condition is local, and the map is surjective (hence any point in $y$ admits a neighbourhood which is the image of a trivializable neighbourhood in $X$) we can reduce the problem to the case of a product of a space $Y\subset \rr^N$ with a stratification $s$ and a euclidean space $\rr^n$ (considered with its trivial stratification), with projection $\pi:Y\times \rr^n\to Y$. If $(Y,s)$ is Whitney, then the product $Y\times \rr^n$ with the trivial stratification on $\rr^n$ is Whitney. Conversely, suppose that $Y\times \rr^n$ is Whitney with the trivial stratification on $\rr^n$. Pick two strata $W, Z\subset Y$, and two sequences $(w_i)\subset W\subset Y,(y_i)\subset Z\subset Y$ both converging to some $y\in Z$, such that also $w_iy_i\to v, T_{w_i}W\to \tau$ for some line $v$ and some vector space $\tau$. We can define liftings of the $w_i,y_i$'s by $(w_i,0),(y_i,0)$, where $0$ is the origin of $\rr^n$. The sequences of the secants and of the tangent spaces of these new points converge respectively to $v\times 0$ and $\tau\times 0$: therefore, we obtain that $v\subset \tau$ by applying the hypothesis that $Y\times \rr^n$ is Whitney.
\end{proof}

\begin{lem}\label{GO-is-Whitney}
	Let $I\in\fs,j\in\nn$. The stratified space $(G_{\cO,I}^{(j)})^\an$ belongs to $\Con$.
\end{lem}
\begin{proof}
	Since the structure map $G_{\cO,I}^{(j)}\to X^I$ is smooth (\cref{smoothness-arc-group}) and surjective, by \cref{analytification-of-smooth} the map $(G_{\cO,I}^{(j)})^\an\to (X^\an)^I$ is a stratified smooth submersion. Since the incidence stratification on $X^I$ is Whitney, we can apply \cref{bundle-stability-of-Whitney}.
\end{proof}

\begin{prop}\label{Conv-is-Whitney}The stratified space $(\Conv_{I_1,\dots,I_k}^{(N)})^\an$ belongs to $\Con$.
\end{prop}

\begin{proof}The following proof has been suggested to us by Robert Cass.

First of all, strata are smooth by factorization (\cref{factorization-property-Conv}) and \cref{strata-of-conv-are-smooth}. Note then that $(\Gr_{I_1}^{(N)})^\an\times\dots\times(\Gr_{I_k}^{(N)})^\an$ and $(\Conv_{I_1,\dots, I_k}^{(N)})^\an$ admit a common smooth cover $$G_{\cK,I_1,I_2}^{(N,j_N)}\times_{X^{I_2}}\dots\times_{X^{I_{k-1}}}G_{\cK, I_{k-1},I_k}^{(N,j_N)}\times_{X^{I_k}} \Gr_{I_k}^{(N)},$$ which projects onto each of them as a smooth bundle having as fiber the smooth unstratified relative group scheme $G_{\cO,I_2}^{(j_N)}\times\dots\times G_{\cO,I_k}^{(j_N)}$. If we pass to analytifications, $(\Gr_{I_1}^{(N)})^\an\times\dots\times(\Gr_{I_k}^{(N)})^\an$ is Whitney by \cref{BD-is-Whitney}, and by \cref{analytification-of-smooth-torsors} the two covers become surjective topological submersions. It now suffices to apply \cref{bundle-stability-of-Whitney} twice, one in each direction.
\end{proof}

\begin{cor}\label{hecke-is-conical}For any $k$, the object $\hck_{\Ran,k}^\an$ lies in $\PCon$.
\end{cor}
\begin{proof} We need to prove that each term in the formula $$\colim_{I_1,\dots,I_k\in \fs}``\lim_{j\geq j_1}''\colim_{[n]\in \Delta^\op}(G_{\cO,I,(j)}^{\an})^{\times_{X^{I}}{n}}\times_{X^{I}} (\Conv_{I_1,\dots,I_k}^{(1)})^\an$$ (where the inner colimit is the colimit along the usual simplicial diagram encoding the action, and is taken in $\Str\TStk$) belongs to $\Con$.
This statement is a formal consequence of the previous ones: the map $G_{\cO,I}^{(j)}\times_{X^{I}}\Conv^{(1)}_{I_1,\dots,I_k}\to \Conv^{(1)}_{I_1,\dots,I_k}$ is a smooth stratified submersion, hence the source is Whitney.
\end{proof}
Recall now \cref{defin-smooth}. We need to prove that:

\begin{prop}\label{horizontal-are-submersions}The functor $\hck^\fact$ from \cref{hck-fact} takes values in the subcategory $$\Corr(\PCon)^\times_{\all,\subm},$$ that is, that we have the right class of horizontal morphisms in order to apply \cref{take-constructibles-final}.
\end{prop}
\begin{proof}We need to describe the image of an arbitrary morphism $(\alpha, \phi):(U_1,\dots, U_m,\langle k\rangle)\to (V_1,\dots, V_n,\langle h\rangle) $ in $\Fact\times \E_1^\nun$ under the functor $\hck^\fact$. Suppose first that 
$\phi$ is of the form $\phi:\langle 2\rangle\to \langle 1\rangle$ (either one of the two ``projections'' or the ``multiplication'' map). Since the image of $(\alpha, \phi)$ can be factored, by definition, as the composition of the image of $(\alpha, \id)$ under $\hck^\fact(-,\langle 2\rangle)$ and the image of $(\id,\phi)$ under $\hck^\fact((V_1,\dots, V_n),-)$, let us inspect what happens on each component.
\begin{itemize}
	\item any pair $(\alpha, \id)$ will be sent to a vertical morphism.
	
	\item suppose that $\phi$ is inert. Then the pair $(\id_{(U_1,\dots, U_m)}, \phi)$ is sent to a vertical morphism, namely one of the two projections $$(\hck_{U_1}\times \dots\times \hck_{U_m})\times(\hck_{U_1}\times \dots\times \hck_{U_m})\to \hck_{U_1}\times \dots\times \hck_{U_m}.$$
	
	\item suppose finally that $\phi$ is the active morphism. There is the following list of progressive simplifications:
	\begin{itemize}
	\item We can assume that $m=1$. Indeed, since the object $\hck_{\Ran, \bullet}$ is 2-Segal, any other case can be recovered via pullback from the cases treated above (in the same spirit, see the definition before \cite[Proposition 8.1.7 of the arXiv version]{DK1}), and since our classes of vertical and horizontal morphisms are stable under pullbacks we conclude.

	We are thus dealing with the correspondence $$\begin{tikzcd}& \hck_{U,2}\arrow[rd]\arrow[ld]&\\
		\hck_{U}\times \hck_{U}&&\hck_{U}\end{tikzcd}$$ and we want to prove that the left leg belongs to $\subm$. 
	\item It suffices to prove the claim for $U=\Ran(X^\an)$. 

	\item It suffices to prove that the map $$\hck_{\Ran,2}\to\hck_\Ran\times\hck_\Ran$$ is representable and smooth, i.e. its pullback to any $\cX\in \Pro(\StrStklft)$ is a pro-smooth map of pro-stacks, in the sense of algebraic geometry, and that it is a torsor when restricted to any stratum of the target. By definition (\cref{defin-subm-prestacks}), the combination of this and \cref{hecke-is-conical} will ensure that its analytification is in $\subm$.
	\item It is sufficient to prove that for every $I_1,I_2\in \fs,I=I_1\sqcup I_2,j\geq j_{1}$
	\begin{equation}\label{p-bar-is-smooth}\overline p_{I_1,I_2}^{(j)}:\hck_{I_1,I_2}^{(j)}\to \hck_{I_1}^{(j)}\times\hck_{I_2}^{(j)}\end{equation} (notations as in \cref{convolution-grassmannians}) is smooth, and that it is a torsor when restricted to any stratum of the target. Indeed, let $\cX$ be any object in $\Pro(\StrStklft)$ together with a map $\cX\to \hck_\Ran\times\hck_\Ran$ in $\PStrStk$. Then there is a map $$\cX\to \Ran(X)\times \Ran(X),$$ which will factor via some $X^{I_1}\times X^{I_2}, I_1,I_2\in \fs$, because any representable object in a category of presheaves is atomic. Hence, the map $\cX\to \hck_\Ran\times\hck_\Ran$ factors as $$\cX\to \hck_{I_1}\times \hck_{I_2}$$ and therefore we have an equivalence $$\cX\times_{\hck_{\Ran}\times\hck_\Ran}\hck_{\Ran,2}\simeq \cX\times_{\hck_{I_1}\times\hck_{I_2}}\hck_{I_1,I_2}.$$ Note that this fiber product belongs to $\Pro(\StrStklft)$ as wanted, and the projection from it to $\cX$ is pro-smooth whenever the map \eqref{p-bar-is-smooth} is a smooth map of stacks for every $j$.
	\item	Recall that there is a morphism of schemes $$p_{I_1,I_2}^{(j)}:G_{\cK,I_1,I_2}^{(1,j)}\times_{X^{I_2}}\Gr_{I_2}^{(1)}\to \Gr_{I_1}^{(1)}\times \Gr_{I_2}^{(1)}$$ which takes the right $G_{\cO,I_2}^{(j)}$-quotient in the first component and is the identity in the second one. This fits into a commutative square of stacks

	$$
	\begin{tikzcd}
	G_{\cK,I_1,I_2}^{(1,j)}\times_{X^{I_2}}\Gr_{I_2}^{(1)}\arrow[r,"p_{I_1,I_2}^{(j)}"]\arrow[d]&\Gr_{I_1}^{(1)}\times \Gr_{I_2}^{(1)}\arrow[d]\\
	\hck_{I_1,I_2}^{(j)}\arrow[r,"\overline p_{I_1,I_2}^{(j)}"]& \hck_{I_1}^{(j)}\times\hck_{I_2}^{(j)}
	\end{tikzcd}
	$$
	where the leftmost vertical arrow exhibits the target as the quotient of the source, relative to $X^I$, with respect to the action of $G_{\cO,I}^{(j)}$ from \cref{BD-convolution-diagram}), and the rightmost vertical arrow exhibits the target as the quotient of the source, relative to $X^I$, with respect to the action of $G_{\cO,I_1}^{(j)}\times G_{\cO,I_2}^{(j)}$ Now, the map $p_{I_1,I_2}^{(j)}$ is a $G_{\cO,I_2,(j)}$-torsor, which implies that $\overline p_{I_1,I_2}^{(j)}$ is smooth in the sense of \cite[\texttt{Tag 075U}]{Stacks-Project}. We need the topological version of this: namely, by applying \cref{smoothness-arc-group} and \cref{analytification-of-smooth} to $p_{I_1,I_2}^{(j)}$, we obtain that $\big(\overline p_{I_1,I_2}^{(j)}\big)^\an$ belongs to $\subm'$ in the sense of \cref{defin-subm-prestacks}.

	Finally, the map is a torsor when restricted to any stratum of the target by the factorization property.
	\end{itemize}
\end{itemize}\end{proof}

\section{The \texorpdfstring{$\E_3$}{E3}-structure}\label{Section-4}

\subsection{Spherical Hecke category over the Ran space}
\begin{construction}\label{sphfact}Let $\cE$ a presentable stable symmetric monoidal $\infty$-category. We consider the composition $$\Sph(G;\cE)^\fact:\Fact\times \E_1^\nun\xrightarrow{\hck^\fact}\Corr(\PCon)^\times_{\all,\subm}\xrightarrow{\Cons^{\otimes, \corr}_\cE}\Prro_\cE$$ where the first functor is the one from \cref{horizontal-are-submersions} and the second functor is the one constructed in \cref{take-constructibles-final}.
Note that $\Sph(G;\cE)^\fact$ sends a morphism of the form $(\alpha, \id_{\langle k\rangle})$, where $\alpha$ is of the form \eqref{metric-balls-inclusion}, to an equivalence of $\infty$-categories by \cref{hck-fact} and \cref{take-constructibles-final}. Moreover, it satisfies the factorization property from \cref{recall-factorizing-constructible} since $\hck^\fact$ does (\cref{factorization+homotopy-invariance}) and $\Cons^\otimes_\cE$ is symmetric monoidal: in other words, by \cref{take-constructibles-final} the functor $$\Cons(\hck_U;\cE)\otimes_\cE \Cons(\hck_V;\cE)\to\Cons(\hck_U\times\hck_V;\cE)$$ is an equivalence for each independent $U,V\subset \Ran(X^\an)$. \end{construction}
We now want to prove that the functor $\Sph(G;\cE)^\fact$ is a hypercomplete cosheaf when restricted to $\Open(\Ran(M))$.

\begin{prop}\label{unipotent-equivalence}
Fix $I_1,\dots, I_k\in \fs$, and $j\geq j'\geq j_1$. Let $f:\hck_{I_1,\dots,I_k}^{(j)}\to \hck_{I_1,\dots,I_k}^{(j')}$ be the transition map. Then the functor  $$f^*:\Cons((\hck_{I_1,\dots,I_k}^{(j')})^\an;\cE)\to \Cons((\hck_{I_1,\dots,I_k}^{(j)})^\an;\cE)$$ 
is an equivalence.
\end{prop}
\begin{proof}
This follows from the fact that the transition maps are in $\uni$ (\cref{transitions-are-uni}), hence their analytification is in $\tri$ by \cref{uni-tri}, and one can apply \cref{tri-equivalences}.
\end{proof}

\begin{rem}\label{description-colimit-Sph}The $\infty$-category \eqref{underlying-cat-stalk} is, by the construction in \cref{take-constructibles-final}, equivalent to 
\begin{equation}\label{description-of-Sph-1}\colim_{I\in \fs^\op}\hspace{-0.2cm}^{\Prr_\cE}\Cons(\hck_{D^I}^{(j)};\cE)\end{equation} where $j$ is any number greater than $j_1$. 
The transition functors, up to a change of index $j$ (which induces an equivalence on the categories of constructible sheaves by \cref{unipotent-equivalence}) in $I$ are given by $*$-pushforward along the closed embeddings\footnote{Recall that these are closed embeddings of unions of strata, hence $*$-pushforward preserves constructible sheaves.} $\hck_{D^I}\to \hck_{D^J}$ induced by $J\twoheadrightarrow I$. This colimit is not filtered. As a consequence of \cite[Theorem 5.5.3.13]{HTT}, it corresponds to the limit in $\widehat\cat_{\infty,\cE}$\footnote{This is the $\infty$-category of large $\cE$-linear $\infty$-categories, defined similarly to \cref{small-linear-categories}.} taken with $^*$-pullback functoriality.

This means that \eqref{description-of-Sph-1} can be rewritten as 

\begin{equation}\label{description-of-Sph-2}\lim_{I\in \fs}\hspace{-10pt} ^{(-)^{*}}\Cons(\hck_{D^I}^{(j)};\cE).\end{equation}

 Here $\Cons(\hck_{D^I};\cE)$ coincides with $\Cons(\hck_{D^{I}}^{(j)};\cE)$ for any $j\geq j_1$. The reason we chose to adopt the pro-object perspective is that, without the possibility of increasing $j$, some maps are impossible to define (notably, the ones associated to surjections $I\to J$ in \cref{HckRan}).

Finally, each term $\Cons(\hck_{D^I}^{(j)};\cE)$ is computed, again by construction, \`a la Bernstein-Lunts, i.e. as $${\colim_{[n]\in\Delta^\op}} ^{\Prr_R,(-)_\vdash}\Cons((G_{\cO,D^I}^{(j)})^{\times_{D^I} n}\times_{D^I} \Gr_{D^I}^{(1)};\cE)\simeq {\lim_{[n]\in\Delta}}^{\lCatE,(-)^*}\Cons((G_{\cO,D^I}^{(j)})^{\times_{D^I} n}\times_{D^I} \Gr_{D^I}^{(1)};\cE)$$ where the limit is taken along the simplicial diagram encoding the action of $G_{\cO,D^I}^{(j)}$ on $\Gr_{D^I}^{(1)}$, with pullback functoriality.\end{rem}

\begin{lem}\label{lemma-hypercompleteness}
	Let $p:Y\to Z$ be a map of topological spaces, and let $\cF$ be the functor \begin{gather*}\Open(Z)\to \Top\\
		U\mapsto p^{-1}(U).
	\end{gather*} Then $\cF$ is a hypercomplete cosheaf on $Z$.
\end{lem}
\begin{proof}
	The claim amounts to the fact that, for every $U\in \Open(Z)$ and any hypercovering $\cU:\dinj\to \Open(Z)$ of $U$, the map $$\colim_{n\in \dinj}p^{-1}(\cU_n)\to p^{-1}(U)$$ between open subsets of $Y$ is a homeomorphism, which is true.
\end{proof}
\begin{prop}\label{hypercompleteness}
	The restriction of the functor $\Sph(G;\cE)^\fact$ from \cref{sphfact} to $\Open(\Ran(M))\times \{<k>\}$ is a hypercomplete cosheaf for every $<k>\in \E_1^\nun$. 
\end{prop}

\begin{proof}Note that, for each $I_1,\dots,I_k\in \fs, N\in \nn, j\geq j_1$, the functors $U\mapsto \Conv_{U,I_1,\dots,I_k}^{(N)}$ and $U\mapsto G_{\cO,U,I}^{(j)}$ are of the form appearing in \cref{lemma-hypercompleteness}, and hence hypercomplete cosheaves. Therefore, the functor $U\mapsto G_{\cO,U,I}^{(j)}\backslash\Conv_{U,I_1,\dots,I_k}^{(1)}$ (for $j\geq j_N$) is again hypercomplete, since it arises as a colimit of hypercomplete cosheaves. 

Therefore, by \cref{cons-hyperdescent}, the functor $U\mapsto \Cons(\hck_{U,I_1,\dots,I_k}^{(j)}; \cE)$ is a hypercomplete cosheaf as well. 

By the discussion in \cref{description-colimit-Sph}, this functor coincides with $U\mapsto \Cons(\hck_{U,I_1,\dots,I_k};\cE)$. Finally, the functor $U\mapsto \Cons(\hck_{\Ran,k};\cE)$ is a hypercomplete cosheaf because it arises as a colimit of hypercomplete cosheaves.\end{proof}
Summing up:

\begin{thm}\label{Sph-is-factorizable-constructible}Let $G$ be a complex reductive group, $X=\Ac, M=X^\an=\C$, and $\cE$ a presentable stable symmetric monoidal $\infty$-category.
	
	The functor $$\Sph(G;\cE)^\fact:\Fact\times\E_1^\nun\to \Prlo_\cE$$ from \cref{sphfact} has the following properties:
	\begin{itemize}
		\item It is a hypercomplete cosheaf in the first variable, satisfying constructibility and the factorization property in the sense of \cite[Definition 5.5.4.1]{HA}, \cite[Theorem 5.5.4.10]{HA}.
		\item It is a map of operads in the second variable.
	\end{itemize}
\end{thm}
\begin{proof}
	The only part that requires justification is the constructibility property. We know that the functor sends maps of the form \eqref{metric-balls-inclusion} to equivalences of $\infty$-categories, by \cref{factorization+homotopy-invariance} and \cref{homotopy-invariance-of-cons}, and that it is a hypercomplete cosheaf by \cref{hypercompleteness}. Note that this is sufficient to apply \cite[Proposition 5.5.1.14]{HA} and conclude that $\Sph(G;\cE)^\fact$ is constructible in the first variable, in the sense of \cite[Definition 5.5.4.1]{HA}. Indeed, the proof of the ``if'' direction of \cite[Proposition 5.5.1.14]{HA} works in the same way if one restricts to a full suboperad of $\Disk(M)^\otimes$ whose colors form a base of $M$, which is the case for $\MDisk(\C)^\otimes\subset \Disk(\C)^\otimes$.
\end{proof}	
	\begin{construction}\label{algebra-structure-on-Sph-Ran}We can now apply \cite[Theorem 5.5.4.10]{HA} to $\Sph(G;\cE)^\fact$ and obtain an $\E_M^\nun\times\E_1^\nun$-algebra with values in $\Prlo_\cE$, where $M=\C$, therefore a 2-dimensional real manifold. Its restriction to any disk $D\subset \C$ has the same value for different $D$'s (up to equivalence in $\Prro_\cE$), since an $\E_M^\nun$-algebra is in particular locally constant, and by \cite[Example 5.4.5.3]{HA}, this restriction is naturally an $\E_2^\nun$-algebra. This means that the stalk of $\Sph(G;\cE)^\fact$ in the first variable at any point $\{x\}\in \Ran(X^\an)$, which is just its value at any disk $D$ containing $x$, induces a map of operads $$\E_2^\nun\times \E_1^\nun\to \Prro_\cE.$$ Its underlying $\infty$-category is \begin{equation}\label{underlying-cat-stalk}\Cons(\hck_{\Ran(D)};\cE)\end{equation} for any (irrelevant) choice of a disk $D$ around $x$. Also the choice of $x$ is irrelevant, in the sense that different choices give noncanonically equivalent $\infty$-categories with equivalent algebra structures (this is easily seen by choosing a path between any two points in $X^\an$ a finite number of disks covering the image of this path).

We will refer to the operation implicit in the $\E_2^\nun$-component as \textit{fusion} and to the one implicit in the $\E_1^\nun$-component as \textit{convolution}.
\end{construction}

\subsection{Specialization to a point}\label{subsection-specialization}

Recall that we denote by $\hck_x$ or just $\hck$ the fiber of $\hck_{\{1\}}$ at any point $x\in \Ac(\C)$. Our goal in this subsection is to transfer the algebra structure established in \cref{algebra-structure-on-Sph-Ran} to the $\infty$-category $$\Cons(\hck^\an;\cE),$$ i.e.  $\Cons_{G_\cO^\an}(\Gr^\an;\cE)$ with the notation of \cref{rem-alg-top-cons}.

\begin{rem}\label{from-Sph-Ran-to-Sph}Let $j\geq j_1,N\in \nn$. Let $D\hookrightarrow\Ran(D)$ denote the closed embedding corresponding to the cardinality $1$ stratum. Recall that we have defined $\Gr_{D}^{(N)}=\Gr_{\{1\}}^\an\times_{(\Ac)^\an}D\simeq\Gr^\an\times D$, $G_{\cO,D}^{(j)}=G_{\cO,\{1\}}^\an\times_{(\Ac)^\an}D\simeq G_\cO^\an\times D$ and $\hck_D=\hck_{\{1\}}^\an\times_{(\Ac)^\an} D\in \PCon$ (the equivalences follow from \cref{translational-invariance}).

First of all, notice that for any choice of a disk $D$ in $(\Ac)^\an$ there is an equivalence of $\infty$-categories 
\begin{equation}\label{Sph-x-SPh-D}\Cons_{G^\an_\cO}(\Gr^\an;\cE)\simeq \Cons_{G^\an_{\cO,D}}(\Gr^\an_D;\cE)\end{equation}
induced by pulling back along the projection $\Gr_D^\an\to \Gr^\an$ (the inverse functor is given by pulling back along the embedding $\Gr^\an\times\{x_0\}\to \Gr^\an\times D$ (for any choice of a point $x_0$ in $D$). This is true by \cref{homotopy-invariance-of-cons}. Therefore, to give an $\E_3$-algebra structure on $\Cons_{G_\cO^\an}(\Gr^\an;\cE)$ is the same as giving an $\E_3$-algebra structure on $\Cons_{G_{\cO,D}}(\Gr_D;\cE)$. By the very nature of this identification, the equivalence does not actually depend on the choice of $x$ and $D$.

Now, there is a pullback diagram in $
\PCon$ $$\begin{tikzcd}\hck_D\arrow[r,hook,"i"]\arrow[d]& \hck_{\Ran(D)}\arrow[d]\\
	D\arrow[r, hook]&\Ran(D)\end{tikzcd}$$
and an adjunction 

\begin{equation}\begin{tikzcd}
		{\Cons(\hck_{D};\cE)} & {\Cons(\hck_{\Ran(D)};\cE)}
		\arrow[""{name=0, anchor=center, inner sep=0}, "{i_*}"', shift right=2, from=1-1, to=1-2]
		\arrow[""{name=1, anchor=center, inner sep=0}, "{i^!}"', shift right=2, from=1-2, to=1-1]
		\arrow["\dashv"{anchor=center, rotate=-90}, draw=none, from=1, to=0]
	\end{tikzcd}
\end{equation}
where $i_*$ is fully faithful because $i$ is an equivariant closed embedding of a union of strata.
\end{rem}

Note that $i^!$ preserves constructible sheaves with respect to the given stratifications: this follows straightforwardly by taking limits and colimits from \cref{upper-shriek-constructibles} (which \textit{per se} only deals with the case of stratified topological spaces -- and not stacks or prestacks).

\begin{prop}[{Dual version of \cite[Proposition 2.2.1.9]{HA}}]\label{dualized-transfer}
Let $\cO^\otimes$ be an $\infty$-operad and $\cc^\otimes\to \cO^\otimes$ be a coCartesian fibration. Suppose given colocalization functors $R_X:\cc_X\to \cc_X$ for each $X\in \cO$, and denote by $$\begin{tikzcd}\cD_X\arrow[""{name=0, anchor=center, inner sep=0}, "{L_X}"', shift right=2, from=1-1, to=1-2]
		\arrow[""{name=1, anchor=center, inner sep=0}, "{R_X}"', shift right=2, from=1-2, to=1-1]
		\arrow["\dashv"{anchor=center, rotate=90}, draw=none, from=1, to=0]&\cc_X\end{tikzcd}$$ the resulting adjunctions. Suppose that the condition in \cite[Definition 2.2.1.6]{HA} is satisfied by the $R_X$'s. Then $\cD$ denote the full subcategory of $\cc$ spanned by all objects of the form $L_X(C)$ for some $X\in \cO, C\in \cc_X$. Let $\cD^\otimes$ be the full subcategory of $\cc^\otimes$ defined in \cite[Before Proposition 2.2.1.9]{HA}. Then $\cD^\otimes$ inherits an $\cO^\otimes$-monoidal structure for which $R$ upgrades to an $\cO^\otimes$-monoidal functor.
\end{prop}
\begin{proof}Let $\cc^{\vee, \otimes}$ be the dual $\cO$-monoidal structure on $\cc^\op$. The given adjunctions induce adjunctions $$\begin{tikzcd}(\cD_X)^\op\arrow[""{name=0, anchor=center, inner sep=0}, "{L_X^\op}"', shift right=2, from=1-1, to=1-2]
		\arrow[""{name=1, anchor=center, inner sep=0}, "{R_X^\op}"', shift right=2, from=1-2, to=1-1]
		\arrow["\dashv"{anchor=center, rotate=-90}, draw=none, from=1, to=0]&(\cc_X)^\op\end{tikzcd}$$
		for each $X\in \cO$. We are thus in the situation of \cite[Proposition 2.2.1.9]{HA}, which yields an $\cO^\otimes$-monoidal structure on a subcategory $\cA^\otimes$ of $\cc^{\vee,\otimes}$ whose underlying category is $\cD^\op$. By passing to dual $\cO$-monoidal structures again, we obtain a full subcategory $\cD^\otimes=\cA^{\vee, \otimes}$ and an adjunction 

		$$\begin{tikzcd}\cD^\otimes\arrow[""{name=0, anchor=center, inner sep=0}, "{L}"', shift right=2, from=1-1, to=1-2]
		\arrow[""{name=1, anchor=center, inner sep=0}, "{R}"', shift right=2, from=1-2, to=1-1]
		\arrow["\dashv"{anchor=center, rotate=90}, draw=none, from=1, to=0]&\cc^\otimes\end{tikzcd}$$
		over $\cO^\otimes$ and such that $L$ is $\cO^\otimes$-monoidal.
\end{proof}

\begin{thm}\label{transfer-theorem}
	There is a canonical $\E_2^\nun\times\E_1^\nun$-algebra structure on $\Cons(\Hck_D;\cE)$ such that the functor $i^!$ upgrades to a lax-monoidal functor.
\end{thm}
\begin{proof}The idea is to transfer the $\E_2^\nun\times \E_1^\nun$-algebra structure to $\Cons_{G_{\cO,D}}(\Gr_D;\cE)$ by applying \cref{dualized-transfer} with $\cO^\otimes=\E_2^\nun\times \E_1^\nun,\cc^\otimes=\Cons(\hck_{\Ran(D)};\cE)$ and $L_X=i^!$ (note that our operad has only one colour $X$).

Recall thus from \cref{algebra-structure-on-Sph-Ran} that there are two compatible operations on $\Cons(\hck_{\Ran(D)};\cE)$, which we call $\star$ (convolution, the one parametrized by the $\E_1^\nun$-variable) and $\odot$ (fusion, the one parametrized by the $\E_2^\nun$-variable). In order to apply Lurie's result, we need to verify that for any $$\cA,\cA',\cB\in \Cons(\hck_{\Ran(D)}; \cE),$$ and a morphism $$f:\cA\to \cA'$$ such that $i^!f$ is an equivalence in $\Cons(\hck_D;\cE)$, the natural maps $$i^!(\cA\star\cB)\to i^!(\cA'\star \cB)$$ and $$i^!(\cA\odot \cB)\to i^!(\cA\odot\cB)$$ are equivalences.

First of all, by the description in \cref{algebra-structure-on-Sph-Ran}, we can fix $I_1,I_2$ and assume that $\cA,\cA'\in \Cons(\hck_{D^{I_1}};\cE), \cB\in \Cons(\hck_{D^{I_2}};\cE)$ (actually, we could even assume $I_1=I_2$, but it is instructive to see what happens in the general case).

We need to prove two things: \begin{itemize}
	\item[$\star$] For the convolution case, we need to prove that $$i^!(m_{I_1,I_2*}p_{I_1,I_2}^*(\cA\boxtimes \cB))\to i^!(m_{I_1,I_2*}p_{I_1,I_2}^*(\cA'\boxtimes \cB))$$ is an equivalence, where the notations are as in the following diagram: 
	
	\[\begin{tikzcd}
		{\hck_D\times \hck_D} & {\hck_{D,D}} & {\hck_{D^2}} & {\hck_D} \\
		{\hck_{D^{I_1}}\times \hck_{D^{I_2}}} & {\hck_{D^{I_1},D^{I_2}}} & {\hck_{D^{I_2\sqcup I_2}}}
		\arrow["{j'}"', from=1-1, to=2-1]
		\arrow["{p_{I_1,I_2}}"', from=2-2, to=2-1]
		\arrow["{\widetilde j}"', from=1-2, to=2-2]
		\arrow["p"', from=1-2, to=1-1]
		\arrow["m", from=1-2, to=1-3]
		\arrow["m_{I_1,I_2}", from=2-2, to=2-3]
		\arrow["j"', from=1-3, to=2-3]
		\arrow["d"', from=1-4, to=1-3]
		\arrow["i", from=1-4, to=2-3]
	\end{tikzcd}\]
	
	Here $i$ stays for the map $i$ read at the $I_1\sqcup I_2$-level, and $d$ for the map pulled back from the diagonal of $D^2$. Note that both squares and the triangle commute, the second square is a pullback and the map $m_{I_1,I_2}$ is ind-proper. Moreover, we have equivalences $m_*\simeq m_\vdash, m_{I_1,I_2*}\simeq m_{I_1, I_2\vdash}$. To see this, it suffices to see that $m_*$ and $m_{I_1,I_2*}$ preserve constructible sheaves. By ind-properness of $m,m_{I_1,I_2}$ and proper base change for (plain, not necessarily constructible) sheaves, it suffices to check this after pullback to each stratum of $X^{I_1\sqcup I_2}.$ There, the map can be realized as a product of copies of the multiplication map $m:\hck_{x,2}\to \hck_x$ for some point $x$. This latter map descends from a $G_\cO$-equivariant map $\Conv_{x,2}\to \Gr_x$; therefore, pushforward along it preserves equivariant sheaves which are constructible with respect to \textit{some} stratification. But equivariant constructible sheaves over $\Gr_x$ with respect to \textit{some} stratification are automatically constructible with respect with to \textit{the} stratification by Schubert cells by \cref{equivariant-constructibles-are-constructible-Schubert}.

	Therefore, we can apply proper base change and conclude that 
	\begin{gather*}i^!(m_{I_1,I_2*}p_{I_1,I_2}^*(\cA\boxtimes \cB))\simeq d^!j^!m_{I_1,I_2*}p_{I_1,I_2}^*(\cA\boxtimes \cB)\\ \simeq d^!m_*\widetilde j^!p_{I_1,I_2}^*(\cA\boxtimes \cB)\simeq d^!m_*p^*j'^!(\cA\boxtimes \cB).\end{gather*} where the last equivalence follows by functoriality of $(-)^!$ and the fact that, $p$ being pro-smooth, $p^*$ can be presented as a twist of $p^!$ (more precisely, a limit of twists of the $p^{(j)!}$'s, with the notation of the proof of \cref{horizontal-are-submersions}).
	
	Note now that $j'$ corresponds to the map $i\times i$ read at the $(I_1,I_2)$-level, and therefore the last expression equals $$d^!m_*p^*(i^!\cA\boxtimes i^!\cB).$$ If we read this construction functorially in $\cA$, we conclude that the map $i^!(\cA\odot\cB)\to i^!(\cA\odot\cB)$ induced by $f$ is an equivalence.
	
	\item[$\odot$]For the fusion product, the condition involves the diagram
	
	\begin{equation}\label{fusion-comparison}\begin{tikzcd}
			{\hck_{D_1}\times\hck_{D_2}} & {\hck_{D^2}} & {\hck_D} \\
			 {\hck_{{D_1}^{I_1}}\times\hck_{{D_2}^{I_2}}} & {\hck_{D^{I_1\sqcup I_2}}}
			\arrow["u_{1,1}", hook, from=1-1, to=1-2]
			\arrow["d"', hook', from=1-3, to=1-2]
			\arrow["{j'}"', hook, from=1-1, to=2-1]
			\arrow["{d_{I_1\sqcup I_2}}", hook, from=1-3, to=2-2]
			\arrow["j", hook, from=1-2, to=2-2]
			\arrow["{u_{I_1,I_2}}", hook, from=2-1, to=2-2]
	\end{tikzcd}\end{equation}
	Note first of all that $d_{I_1\sqcup I_2}=dj$ corresponds to the map $i$ read at the $I_1\sqcup I_2$-level.
	Up to identifying the first column with the second one, $\cA\odot\cB$ corresponds to $(u_{I_1,I_2})_\vdash(\cA\boxtimes\cB)$ and
	$$i^!(\cA\odot\cB)\simeq d^!j^!(u_{I_1,I_2})_\vdash(\cA\boxtimes\cB).$$
	Therefore, it suffices to show that the base-change map \begin{equation}\label{base-change-u-j}j^!(u_{I_1,I_2})_\vdash\to u_\vdash j'^!\end{equation} is an equivalence. By passing to left adjoints, one is left to check that $$(u_{I_1,I_2})^*j_!\to j'_!u^*$$ is an equivalence, which follows from proper base change of sheaves.

\end{itemize}
\end{proof}

\begin{note}In a previous version of the present paper, $i^*$ was used in \cref{transfer-theorem} in place of $i^!$. We thank Katsuyuki Bando for noticing a mistake in the original proof, which ultimately led us to replace $i^*$ with $i^!$.\end{note}

\begin{notation}\label{Sph_x}We denote the algebra structure inherited by $\Cons(\hck_x^\an;\cE)$ by means of \eqref{Sph-x-SPh-D} as $$\Sph(G)_x^\otimes\in \Alg_{\E_2}^\nun(\Alg_{\E_1}^\nun(\Prro_\cE)).$$\end{notation}

\subsection{Main result and t-exactness}
By convenience, given three $\infty$-operads $\cO,\cO',\cO''$, we will call ``bilinear maps'' those maps $\cO\times\cO'\to \cO''$ which are maps of operads separately in each variable.
\begin{prop}\label{E2-unital}Let $x$ be any point in $X^\an$. The bilinear map $\Sph(G;\cE)_x^\otimes:\E_2^\nun\times \E_1^\nun\to \Prro_\cE$ extends to a bilinear map $\E_2\times\E_1^\nun\to \Prro_\cE$, which we again denote by $\Sph(G;\cE)_x^\otimes$.\end{prop}
\begin{proof}We can apply \cite[Theorem 5.4.4.5]{HA}: that is, it suffices to exhibit a quasi-unit for any $$\Sph(G;\cE)_x^\otimes (-,\langle k\rangle)$$ functorial in $\langle k\rangle\in \E_1^\nun$. Consider the map (natural in $k$) $\Spec \C\to \hck_{x,k}$ represented by the sequence $(\cT,\dots, \cT,\id|_{X\setminus \{x\}}, \dots, \id_{X\setminus \{x\}})\in \hck_{x,k}$. Note now that this induces a map $$e_k:*\to \hck^\an_{x,k}.$$
We want to check that the map $e_k$ is a quasi-unit in the sense of \cite[Definition 5.4.3.1.]{HA} for any $k$, functorially in $k$. But both maps $\hck_{x,k}^\an\to \hck_{x,k}^\an\times \hck_{x,k}^\an\to \hck_{x,k}^\an$ induced by $e_k$ (by targeting respectively the first or the second factor of the product) are the identity, since gluing with the trivial $G$-torsor along with its trivial trivialization does not change the original torsor. At the level of constructible sheaves, the unit is given by the pushforward of the constant sheaf $\mathbf 1_\cE$ along $e_k$.
\end{proof}

\begin{prop}The bilinear map $\Sph(G;\cE)_x^\otimes:\E_2\times\E_1^\nun\to \Prro_R$ extends to a bilinear map $\Sph(G;\cE)_x^\otimes:\E_2\times\E_1\to \Prro_R$.\end{prop}
\begin{proof}Again, it suffices to exhibit a quasi-unit. Let us denote by $\mathbf 1$ the pushforward along the trivial section $t:*\to \hck_{x,1}^\an, t(*)=(\cT,\cT,\id|_{X\setminus x}),$ of the constant sheaf with value $\mathbf 1_\cE$.\\
The proof is given in \cite[Proposition IV.3.5]{Reich}. We just rewrite it in our notation. We drop the superscript $(-)^\an$ everywhere for simplicity. Let us assume that the entry in the variable $\E_2$ is $\langle 1\rangle$ by simplicity (the general case is just ``a direct power'' of this one). We denote by $\star$ the $\E_1$-product of equivariant constructible sheaves on $\Gr_x$ described by $\Sph(G;\cE)^\otimes_x(\langle1\rangle,-)$. For any $F\in \Cons_{G_\cO}(\Gr,R)$ we can compute the product via the convolution diagram $$
\begin{tikzcd}[column sep="0.3cm"]
	& {G_\cK\times \Gr} \arrow[ld, "p"'] \arrow[r, "q"] & G_\cK\times^{G_\cO}\Gr \arrow[rd, "m"] &     \\
	\Gr\times \Gr&                                                                 &                                                        & \Gr.
\end{tikzcd}$$

This diagram extends to $$
\begin{tikzcd}
	&               & {G_\cK\times \Gr} \arrow[ld, "p"'] \arrow[r, "q"] & {G_\cK\times^{G_\cO}\Gr} \arrow[rd, "m"] &     \\
	*\times \Gr \arrow[r, "t\times \id"] \arrow[rrru, "j", bend left] & \Gr\times \Gr &                                                                 &                                                        & \Gr,
\end{tikzcd}$$
where $j$ is the closed embedding $(\cF,\alpha)\mapsto (\cT,\id|_{X\setminus x},\cF,\alpha)$ whose image is canonically identified with $\Gr$. Let $F\in \Cons_{G_\cO}(\Gr;\cE)$. We want to prove that $\mathbf 1\widetilde\boxtimes F\simeq j_*(\mathbf 1_\cE\boxtimes F)$, i.e. that
$$q^* j_*(\mathbf 1_\cE\boxtimes F)\simeq p^*(t\times \id)_*(\mathbf 1_\cE\boxtimes F).$$
Note that because of the consideration about the image of $j$ the support of both sides lies in $G_\cO\times \Gr\subset G_\cK\times \Gr$, and this yields a restricted diagram
$$
\begin{tikzcd}
	& G_\cO\times \Gr \arrow[r, "\widetilde q"] \arrow[ld, "\widetilde p"'] & \Gr \arrow[rd, "\widetilde m"] &     \\
	\Gr \arrow[rru, "\sim", "j"'] &                                           &                     & \Gr.
\end{tikzcd}$$ This proves the claim. By applying $m_*$ we obtain $$\mathbf 1\star F\simeq m_*(j_*(\mathbf 1_\cE \boxtimes F))=\mathbf 1_\cE\boxtimes F=F$$ since $mj=\id$.
\end{proof}

Thanks to these results, our functor $\Sph(G;\cE)_x^\otimes$ from \cref{Sph_x} is finally promoted to a bilinear map $\E_2\times \E_1\to \Prro_R$. By the Additivity Theorem (\cite[Theorem 5.1.2.2]{HA}), this is the same as an $\mathbb E_3$-algebra object in $\Prro_R$.

Note also that, by the convolution presentation of the monoidal law, the functor $\Sph(G,\cE)_x^\otimes$ takes values in $\Prlro_\cE$: indeed, by the same argument as in the first part of the proof of \cref{transfer-theorem} and by properness, the functor $\overline m_\vdash$ coincides both with $m_*$ and with $m_!$, and hence it is a right and left adjoint. For convenience, we will say that our algebra takes values in $\Prlo_\cE$ from now on.

Summing up:
\begin{thm}[Main theorem]\label{final-theorem}Let $G$ be a complex reductive group and $\cE$ be a presentable stable symmetric monoidal $\infty$-category. There is an object $\Sph(G;\cE)^\otimes\in \Alg_{\E_3}(\Prlo_\cE)$ having as underlying object the topological spherical Hecke category $$\Sph(G;\cE)^\topp=\Cons_{G_\cO^\an}(\Gr_G^\an;\cE)$$ (see \cref{spherical-categories}).\end{thm}

Note that, when $R$ is a commutative ring, either discrete, prodiscrete or $\ell$-adic (i.e. an algebraic extension of $\Q_\ell$), this specializes to \cref{final-theorem-in-introduction} for $\cE=\Mod_R^\cont$ in the sense of \cref{profinite-etc}.

\begin{cor}[Small spherical Hecke category]\label{corollary-small-subcategory}In the same setting as \cref{final-theorem}, there is an induced $\E_3$-monoidal structure in $\cat^\times_{\infty,\cE^\omega}$ on $$\Cons_{G_\cO^\an}(\Gr_G^\an; \cE^\omega).$$ 
	
	Let $R$ be a commutative ring, noetherian and of finite global dimension, and $\cE=\Mod_R$. Then, on objects, the restriction of this product to equivariant perverse sheaves coincides with the classical (commutative) convolution product of perverse sheaves \cite{MV}, up to the perverse truncation of the derived tensor product appearing in the definition of the latter (cf. also \cref{t-exactness-of-convolution}).\end{cor}
\begin{proof}
The (not full) inclusion $\Prro_{\cE}\to \widehat{\cat}_{\infty,\cE}^\times$ (the $\infty$-category of large $\cE$-linear $\infty$-categories) is lax monoidal, i.e. it is a map of operads. Therefore, $\Cons_{G_\cO^\an}(\Gr^\an;\cE)$ has an induced $\E_3$-algebra structure in $\widehat{\cat}^\times_{\infty,\cE}$. By using the convolution formula, one sees that the convolution product restricts to $\Cons_{G_\cO^\an}(\Gr^\an,\cE^\omega)$: again, this follows from the fact that $\overline p^*$ preserves $\cE^\omega$-valued sheaves (since it is induced by restriction along functors between categories of exit paths), and that the same is true for $\overline m_*$\footnote{Indeed, let $f:X\to Y$ be a map between topological spaces, $\cE$ an $\infty$-category. If $\cF\in \Shv(X;\cE)$ and $U$ is open in $Y$, then $$(f_*\cF)(U)=\cF(f^{-1}(U))$$ which belongs to $\cE^\omega$ if $\cF$ takes values in $\cE^\omega$.}. Since the inclusion\footnote{Here we mean the functor sending a small category to itself.} $\cat_{\infty,\cE^\omega}^\times\subset \widehat \cat_{\infty,\cE^\omega}^\times$ is strong symmetric monoidal, we obtain the first part of the statement.
The claim regarding perverse sheaves follows from what observed in \cref{quotient-convolution-diagram}.
\end{proof}

\begin{cor}[Renormalization]\label{corollary-Ind-completion}Let us assume that we are in the same setting of \cref{corollary-small-subcategory}, and that $\cE$ is compactly generated. Then there is an induced $\E_3$-monoidal structure on $$\Sph(G;\cE)^\textup{ren}=\Ind(\Cons^\fd_{G_\cO}(\Gr;\cE))$$ (see \cref{renormalized-spherical-category}) as an object of $\Prlo_\cE$.\end{cor}
\begin{proof}
Since $\cE$ is compactly generated, we have that $\Ind(\cE^\omega)\simeq \cE$. Recall first of all that the functor $$\Ind:\Catex\to \Prl_\st$$ is symmetric monoidal: this follows from \cite[Proposition 4.8.1.8]{HA}) with $\mathcal K=\varnothing$ and $$\mathcal K'=\{\kappa\textup{-filtered simplicial sets, for some regular cardinal }\kappa\}$$ and the fact that colimits are generated by filtered colimits and finite colimits. Therefore, there is an induced symmetric monoidal functor $$\Mod_{\cE^\omega}(\Catex)\to\Mod_\cE(\Prl_\st).$$ This functor factors through $\Prl_\cE$, because $\Ind$ of an exact functor $F$ is strongly continuous, i.e. its right adjoint $G$ preserves colimits, and hence by \cite[Remark 6.6]{DAG-VII} $G$ is automatically $\cE$-linear.
\end{proof}

Again, the latter results specialize to \cref{corollary-small-subcategory-introduction} and \cref{corollary-Ind-completion-introduction} in the ring case.

\begin{rem}\label{hearts-and-Ek}Let $\cc^\otimes$ be an $\E_k$-algebra in $\Cat^\ex$, the $\infty$-category of stable $\infty$-categories and exact functors between them. Suppose given a t-structure on $\cc$ which is compatible with the algebra structure, in the sense of \cite[Example 2.2.1.3]{HA} (intuitively, the subcategory $\cc_{\geq 0}$ should be closed under tensor). Then by \cite[Proposition 2.2.1.8, Proposition 2.2.1.9]{HA} the heart $\cc^\heartsuit$ of the t-structure canonically inherits an $\E_k$-algebra structure (the proof goes along the same lines of \cite[Example 2.2.1.10]{HA}, which deals with the case $\E_\infty$).

Note that this ``induced $\E_k$-structure'' procedure is functorial: given a stable-exact and t-left exact (in the sense of \cite[Definition 1.3.3.1]{HA}) $\E_k$-monoidal functor $\cc^\otimes\to \cD^\otimes$, one obtains an additive functor $\cc^\heartsuit\to\cD^\heartsuit$ between abelian categories, which can be viewed as the composition of the $\E_k$-monoidal functors $$\cc^\heartsuit\hookrightarrow\cc_{\leq 0}\xrightarrow{F|_{\cc_{\leq 0}}} \cD_{\leq 0}\xrightarrow{\tau_{\leq 0}}\cD^\heartsuit$$ (notice that $F$ restricts to the coconnective parts by left-t-exactness).
\end{rem}

\begin{rem}\label{t-exactness-of-convolution}Let $R$ be a commutative discrete ring. The small spherical Hecke category carries a canonical t-structure inherited from the perverse t-structure on bounded categories of (finite-dimensional) constructible sheaves. Indeed, the Bernstein-Lunts presentation $$\Sph(G;R)^{\locc}\simeq \lim_{n}\cD_\textup{c}^\fd(G_\cO^{\times n}\times\Gr;R)$$ of \cref{equivariant-varying-strat} establishes a canonical t-structure on the limit by \cite[Section 2.5]{Bernstein-Lunts}.

The convolution product on $\Sph(G;R)^{\textup{loc.c}}$ is perverse left t-exact, and t-exact when $R$ is a field. This follows from the considerations recalled in \cref{definition-of-twisted-product}, as we explain below.

\begin{itemize}\item The derived tensor product is always left t-exact for the perverse t-structure (this is easy to deduce since the derived tensor product is cohomology-left exact. Moreover, with coefficients in a field, it is actually perverse t-exact, see \cite[Lemma 4.1]{MV}).

\item Pullback along maps in $\subm\subset\Mor(\Con)$, equidimensional of relative dimension $d$, is perverse t-exact up to a shift by $d$ equal to the relative dimension of the map: this is proven in the exact same way as for instance \cite[4.2.4]{BBDG} for schemes, but in the topological setting. In brief, let $f:(X,s)\to (Y,t)$ be a map of stratified topological spaces arising as analytification of stratified complex schemes. One uses cohomological exactness of $f^*$ to prove left t-exactness, and Poincar\'e duality to prove that $f^!\simeq f^*[d]$. But $f^!$ is right t-exact, hence the statement. See also \cite[Proposition 10.3.15]{Kashiwara-Schapira}.

\item Let $$\begin{tikzcd}X\arrow[r,"f'"]\arrow[d,"\pi'"]&Y\arrow[d,"\pi"]\\ \cX\arrow[r,"f"]& \cY\end{tikzcd}$$ be a commutative diagram with $\cX, \cY\in \ConStk$, $\pi, \pi'\in \widetilde\subm, f\in \subm'$.  Then $f^*[d]$ is perverse t-exact, where $d=\dim f'+\dim \pi-\dim \pi'$: see \cite[§4]{Laszlo-Olsson-perverse} for definitions and results in the context of algebraic geometry, which translate \textit{verbatim} to the complex-topological context, as in the previous point.


\item The map $\overline p^{(N,j)}: \hck_2^{(N,2j)}\to \hck^{(N,j)}\times\hck^{(N,j)}$ is a smooth map of stacks of relative dimension $0$ (but it is not an isomorphism: indeeed, it is not representable).
\item The map $m$ in the convolution diagram is stratified semi-small, \cite[Lemma 6.4]{Baumann-Riche}.
	
\item Pushforward along proper stratified semi-small maps is t-exact (see the proof of \cite[Proposition 6.1]{Baumann-Riche}).

\end{itemize}


Thus, one can apply \cref{hearts-and-Ek} to $\Sph(G;R)^\locc$ with $k=3$ and recover the convolution product of perverse sheaves and its commutativity constraint, as mentioned in \cref{heart-of-derived-Satake}. Indeed, although in the original formula the perverse truncation appears right after performing the derived tensor product (and not at the end of the process), there is no actual difference with our formula in this respect, since the rest of the procedure defining the convolution product is perverse t-exact.

\end{rem}


\begin{appendices}

\section{Recollections and complements in Geometric Langlands}\label{appendix-Geometric-Satake}
\subsection{The Satake category}
Let us resume from the definition of affine Grassmannian, recalled in \cref{affine-Grassmannian}.


\begin{recall}[{see \cite[Theorem 1.1.3]{Zhu}}]\label{perverse-and-stratification}There is a natural action of $G_\cO$ on $\Gr_G$ by left multiplication, whose orbits define an algebraic stratification of $\Gr_G$ over the poset $\mathbb X_\bullet(T)^+$ of dominant coweights of the Cartan group $T$ of $G$. When viewed from the point of view of the complex-analytic topology on $\Gr_G$, this stratification satisfies the so-called \textit{Whitney conditions} (for a proof, see \cite{MO-Gr-is-Whitney}). One can characterize the stratification as follows. The Cartan decomposition\footnote{The symbol $\bigsqcup$ should only be understood as a set-theoretical decomposition, not a topological or scheme-theoretic one.} $$G_\cK=\bigsqcup_{\mu\in \xt}G_\cO t^\mu G_\cO$$ induces a partition $$\Gr_G=\bigsqcup_{\mu\in \xt}G_\cO t^\mu.$$ For an element $g$ of $G_\cK$, the associated $\mu$ is denoted by $$\Inv(g)$$ and is the same for every $g'$ in the same right $G_\cO$-class, i.e. $\Inv(-)$ factors through $G_\cK\to \Gr_G$.

	If $G=\textup{GL}_n$, $\xt$ can be realized (noncanonically) as the set $$\{(\mu_1,\dots,\mu_n)\mid \mu_1\geq\dots\geq\mu_n\}$$ and via this identification $t^\mu$ is exactly the diagonal matrix $\textup{diag}(t^{-\mu_i})$.
	
	For $\mu\in \xt$, one defines $$\Gr_\mu=\{\Lambda\in\Gr\mid \Inv(\Lambda)=\mu\}.$$

	There is a natural filtration of $\Gr$ by finite-dimensional projective schemes $\Gr_{\leq \mu}={\bigcup_{\nu\leq \mu}}\Gr_\nu$. Moreover, the action $G_\cO\circlearrowright \Gr_G$ preserves $\Gr_{\leq\mu}$, and actually each $G_\cO\circlearrowright \Gr_{\leq \mu}$ factors through the quotient $G_\cO \twoheadrightarrow G_{\cO}^{(j)}= G(\C\taylor/t^{j}\C\taylor)$ for any $j$ larger than some $j_\mu$ (\cite[Lemma IV.1.4]{Reich}). This is actually the reason why the orbits form a stratification in the first place (\cite{MO-Gr-is-Whitney}). \end{recall}

\begin{defin}\label{truncated-GO}
	Let $j\geq 1$. We define $$G_{\cO}^{(j)}=G(\C\taylor/t^{j}\C\taylor).$$
\end{defin}

\begin{defin}Let $R$ be a ring. The category of $G_\cO$-equivariant perverse sheaves on $\Gr_G$ (or \textit{Satake category}) with values in $R$-modules is $$\Perv_{G_\cO}(\Gr;R):=\colim_{\mu\in \xt}\Perv_{G_{\cO}^{(j_\mu)}}(\Gr_{\leq \mu};R)$$ (see \cite[5.1 and A.1]{Zhu}).\end{defin}
The definition of each term is independent of $j_\mu$ because of \cite[Lemma A.1.4]{Zhu}.

\begin{recall}[{cf. \cite[Lemma 3.1.7]{Zhu}}]\label{affine-formal-completions}Let $X$ be a smooth complex curve, $R$ a complex ring, and $x\in X(R)$ an $R$-point. There is a well-defined formal completion of $\Gamma_x$, the graph of $x$ in $X_R$, which is a formal scheme whose (discrete) ring of functions is $\widehat \oo_{\Gamma_x}$. We consider the \textit{affine formal neighbourhood} of $x$, defined as the map
	$$\widetilde{(X_R)}_{\Gamma_x}=\Spec \widehat\oo_{\Gamma_x}\to X_R$$ coming from \cite[Lemma 3.1.7]{Zhu}, \cite[Proposition 2.12.6]{BD-Hitchin}.
	Note that the source of this map is always isomorphic, \'etale-locally on $R$ (and noncanonically, since the isomorphism depends on the map $x$), to $\Spec R\taylor$. In particular, each closed point $x$ of $X$ admits an affine formal neighbourhood $$\widetilde X_x\simeq\Spec \C\taylor\to X.$$
	In general, for a $R$ a commutative ring and $x\in X(R)$, consider the square 
	\[\begin{tikzcd}
		\mathring{(X_R)}_{\Gamma_x} & \widetilde{(X_R)}_{\Gamma_x} \\
		{X_R\setminus \Gamma_x} & {X_R}
		\arrow[from=1-1, to=2-1]
		\arrow[from=1-1, to=1-2]
		\arrow[from=1-2, to=2-2]
		\arrow[from=2-1, to=2-2]
	\end{tikzcd}\]
 where the upper left vertex is by definition the pullback of the span. This pullback is again an affine scheme, called the \textit{punctured affine formal neighbourhood} of $x$. If $R=\C$ and $x$ is a closed point of $X$, we obtain $$\mathring X_x\simeq \Spec \C\laurent\to X.$$
 \end{recall}

\begin{defin}Let $\mathbf{Bun}_G$ be the moduli stack of $G$-torsors over $\C$. If a scheme $Z$ over $\C$ is given, we define the relative version $$\mathbf{Bun}_G^Z:\CAlg\to \Grpd$$
	$$R\mapsto \{\mbox{$G$-torsors over }Z\times \Spec R\}=\Bun(Z_R).$$
\end{defin}
\begin{prop}\label{prop:Grloc}For any closed point $x$ of a smooth projective complex curve $X$, the functor $\Gr_G$ is equivalent to the following:
	\begin{equation}\Gr_G^{\tloc}:R\to \{\cF\in \Bun((\widetilde{X_R})_{x\times \Spec R}), \alpha:\cF|_{\mathring{(X_R)}_{x\times \Spec R}}\xrightarrow{\sim}\cT_{\mathring{(X_R)}_{\{x\}\times \Spec R}}\}.\end{equation}
	\end{prop}

\begin{proof}
	The proof is explained for instance in \cite[Proposition 1.3.6]{Zhu}.
\end{proof}

We will need the following version of the affine Grassmannian as well.
\begin{construction}\label{rem:Grglob}Let $G=\GL_n$. Define $\Gr_G^{\glob}$ as the fiber of the restriction map $\mathbf{Bun}_G^X\to \mathbf{Bun}_G^{X\setminus \{x\}}$ at the trivial $G$-torsor, i.e. as the functor $$R\to \{\cF\in \Bun(X_R),\alpha:\cF|_{X_R\setminus(\{x\}\times \Spec R)}\xrightarrow{\sim}\cT_{X_R\setminus(\{x\}\times \Spec R)}\}.$$ Indeed, in the diagram of groupoids
	$$\begin{tikzcd}\Gr_G^{\glob}(R) \arrow[r] \arrow[d] & \Bun(X_R) \arrow[d] \arrow[r] \arrow[d] & \Bun((\widetilde{X_R})_{\{x\}\times\Spec R}) \arrow[d]  \\
		\{\cT|_{X\setminus \{x\}}\} \arrow[r]          & \Bun(X_R\setminus (\{x\}\times \Spec R)) \arrow[r]     & \Bun((\mathring{X_R})_{\{x\}\times \Spec R})
	\end{tikzcd}$$
	the right-hand square is Cartesian by the Beauville--Laszlo Theorem \cite{BL}, more precisely in the form of \cite[Remark 2.3.7]{BD-Hitchin}. Since the left-hand square is Cartesian by definition, the outer square is Cartesian. Therefore, $\Gr_G^{\glob}(R)$ is isomorphic to the fiber of the restriction map $\Bun((\widetilde{X_R})_{\{x\}\times \Spec R})\to\Bun((\mathring{X_R})_{\{x\}\times \Spec R})$ at the trivial bundle, and this is exactly $\Gr^{\tloc}_G(R)$. For more details, and for the case of an arbitrary reductive $G$, see \cite[Theorem 1.4.2]{Zhu}.\end{construction}

\begin{rem}\label{filtration-and-stratification-of-Gr}Let $G$ be a complex reductive group, $X$ a smooth complex curve, $x\in X(\C)$. In the case $G=\GL_n$, one can filter $\Gr$ by $$\Gr^{(N)}=\{\cF\in\Bun(X), \alpha:\cF|_{X\setminus \{x\}}\triv\cT|_{X\setminus \{x\}}\mid \cO_X^n(-N)\subset \cF\subset \cO_X^n(N)\},$$ $N\in \nn$. This filtration is compatible with the stratification and the filtration appearing in \cref{perverse-and-stratification}: see e.g. \cite[§2.3]{Kamnitzer-Mutiah-Weekes}. In the case of a general $G$, a similar filtration is achieved by means of the choice of a faithful representation $G\to \GL_n$ for some $n$ (see \cite[Propositions 1.2.5, 1.2.6]{Zhu}), and a similar compatibility result holds (see again \cite[§2.3]{Kamnitzer-Mutiah-Weekes}).\end{rem}

\begin{construction}\label{definition-of-twisted-product}Let $R$ be a commutative discrete ring. We recall now the tensor structure given by {\bf convolution product} on $\cP\mathrm{erv}_{G_\cO}(\Gr_G;R)$. A more detailed account is given in \cite[Section 1, Section 5.1, 5.4]{Zhu}.	
	Consider the diagram \begin{equation}\label{eq:convdiag}\begin{tikzcd}
			& G_\cK\times \Gr_G \arrow[ld, "p"] \arrow[r, "q"] & G_\cK\times^{G_\cO} \Gr_G \arrow[rd, "m"'] &      \\
			\Gr_G\times \Gr_G &                                                          &                                                           & \Gr_G
		\end{tikzcd}
	\end{equation} where ${G_\cK}\times^{G_\cO}\Gr_G$ (or $\Conv_G$) is the stack quotient of the product ${G_\cK}\times \Gr_G$ with respect to the ``anti-diagonal'' left action of ${G_\cO}$ defined by $\gamma\cdot(g,[h])=(g\gamma,[\gamma^{-1} h])$. The map $p$ is the projection to the quotient on the first factor and the identity on the second one, the map $q$ is the projection to the quotient by the mentioned action of $G_\cO$.
	
	Note that the left multiplication action of ${G_\cO}$ on ${G_\cK}$ and on $\Gr_G$ induces a left action of ${G_\cO}\times {G_\cO}$ on $\Gr_G\times \Gr_G$. It also induces an action of ${G_\cO}$ on ${G_\cK}\times \Gr_G$ given by $(\mbox{left multiplication}, \id)$ which canonically projects to an action of $G_\cO$ on ${G_\cK}\times^{G_\cO} \Gr_G$. Note that $p,q$ and $m$ are equivariant with respect to these actions (more precisely, $p$ is $G_\cO\times G_\cO$-eqivariant, whereas $q$ and $m$ are $G_\cO$-equivariant).
	
	Now if $\mathcal A_1,\mathcal A_2$ are two $G_{\cO}$-equivariant perverse sheaves on $\Gr_G$, one can define a convolution product \begin{equation}\label{convprodbase}\mathcal A_1\star \mathcal A_2=m_*\widetilde{\mathcal A}\end{equation} where $m_*$ is the derived direct image functor, and $\widetilde{\mathcal A}$ is a perverse sheaf on ${G_\cK}\times^{{G_\cO}} \Gr_G$ which is equivariant with respect to the left action of ${G_\cO}$ and such that $q^*\widetilde{\mathcal A}=p^{*}(^\textup{p}\cH^0(\mathcal A_1\boxtimes \mathcal A_2))$. \footnote{The tensor product denotes the derived tensor product in the derived category. If the stalks of the two sheaves are flat, e.g. when the ring of coefficients is a field, the external tensor product is already perverse, see \cite[Lemma 4.1]{MV}. In general, one needs to consider the perverse truncation, as in the formula.} Note that such an $\widetilde{\mathcal A}$ exists because $q$ is the projection to the quotient and $\cA_2$ is $G_\cO$-equivariant, and one can prove that $\widetilde \cA$ is again perverse.
	
	The functor $m_*$ carries perverse sheaves to perverse sheaves: indeed, it can be proven that $m$ is ind-proper, i.e. it can be represented by a filtered colimit of proper semi-small maps of schemes. By \cite[Lemma III.7.5]{KW}, and the definition of $\mathcal P\mathrm{erv}_{{G_\cO}}(\Gr_G;R)$ as a filtered colimit, this ensures that $m_*$ carries perverse sheaves to perverse sheaves.
\end{construction}

It is important to stress that, at every step, we are implicitly assuming our sheaves to be supported on some $\Gr_{\leq\mu}$.

Observations similar to \cref{rem:Grglob} prove the following:

\begin{prop}\label{moduli-interpretation} We have the following equivalences of groupoids:
	
	$${G_\cO}(R)\simeq\mathrm{Aut}((\widetilde{X_R})_{x\times \Spec R},\cT)$$
	
	$${G_\cK}(R)\simeq\{\cF\in \Bun(X_R),\alpha:\cF|_{(X\setminus \{x\})\times \Spec R}\simeq \cT|_{(X\setminus \{x\})\times \Spec R},$$$$\mu:\cF|_{(\widetilde{X_R})_{x\times \Spec R}}\simeq \cT|_{(\widetilde{X_R})_{x\times \Spec R}}\}$$
	
	$$(G_{\cK}\times^{G_\cO}\Gr)(R)\simeq\{\cF\in \Bun(X_R),\alpha:\cF|_{(X\setminus\{x\})\times \Spec R}\simeq \cT|_{(X\setminus \{x\})\times \Spec R},$$$$\cG\in \Bun(X_R),\eta:\cF|_{(X\setminus\{x\})\times\Spec R}\simeq \cG_{(X\setminus\{x\})\times \Spec R}\}$$
\end{prop}

\subsection{The convolution product via quotient stacks}\label{appendix-convolution}

\begin{defin}\label{Hecke-stack}
	We define the complex stack $$G_\cO\backslash \Gr$$ as the fpqc quotient stack of $\Gr$ by the left action of $G_\cO$. We also define $$G_\cO\backslash (G_\cK\times^{G_\cO}\Gr)$$ as the fpqc quotient stack of $G_\cK\times^{G_\cO}\Gr$ by the left action of $G_\cO$ on the first factor $G_\cK$. 
\end{defin}

\begin{prop}
	There is an equivalence of stacks between $G_\cO\backslash \Gr$ and the functor $$\Aff_\C^\op\to \Grpd$$
	$$\{\cF_0,\cF_1\in\Bun(\Spec R\taylor),\eta:\cF_0|_{\Spec R\laurent}\xrightarrow{\sim}\cF_1|_{\Spec R\laurent}\}.$$
\end{prop}

\begin{proof}
	There is a map $\pi$ from $\Gr$ to the moduli space appearing in the statement, described as follows. Let $\cT$ be the trivial $G$-bundle on $\Spec R\taylor$; then $\pi(\cF, \alpha):=(\cT, \cF, \alpha^{-1})$. Let us show that this map is essentially surjective. Since any $G$-torsor on $\Spec R\taylor$ is trivializable locally in $\Spec R$, for any triple $(\cF_0, \cF_1,\eta)$ as in the statement one can find, locally in $\Spec R$, $\mu:\cF_0\xrightarrow{\sim}\cT$ and consequently $\alpha:\cF_1|_{\Spec R\laurent}\triv \cT|_{\Spec R\laurent}$ such that $\eta=\alpha^{-1}\circ \mu|_{\oD}$. The fact that this is local in $\Spec R$ is not a problem, since we are considering the quotient stack on the left-hand-side. Thus, the triple $(\cT, \cF_1,\alpha^{-1})$ is isomorphic to $(\cF_0,\cF_1,\eta)$ by means of the isomorphism $(\mu, \id):(\cF_0,\cF_1,\eta)\to (\cT,\cF_1, \alpha^{-1})$.
	
	To conclude the proof, it suffices to prove that the fiber of $\pi$ at each $R$-point of the right-hand side is $G_\cO\times_\C \Spec R$. But the fiber at $(\cF_0,\cF_1,\eta)$ is the set of those $(\alpha,\mu)$, $\alpha:\cF_1|_{\Spec R\laurent}\xrightarrow{\sim}\cT|_{\Spec R\laurent},\mu:\cF_0\xrightarrow{\sim}\cT$ such that $\alpha^{-1}\circ \mu|_{\Spec R\laurent}=\eta$, which in turn amounts to the set (in particular a $0$-truncated groupoid) of $\mu$'s since $\alpha$ is completely determined by $\eta$ and $\mu$. But this is $G_\cO$, since any two trivializations on $\Spec R\taylor$ are connected by a unique automorphism of $\cT$ on $\Spec R\taylor$.
\end{proof}

In a similar way, one can prove that $$G_\cO\backslash (G_\cK\times^{G_\cO}\Gr)\simeq \{\cF_0,\cF_1,\cF_2\in \Bun(\hat D), \eta_1:\cF_0|_{\mathring D}\simeq \cF_1|_{\mathring D}, \eta_2:\cF_0|_{\mathring D}\simeq \cF_2|_{\mathring D}\}.$$

\begin{rem}\label{quotient-convolution-diagram}Consider the diagram of stacks
	\begin{equation}\label{stacky-convolution}\begin{tikzcd}
			& {(G_\cO\times G_\cO)\backslash(G_\cK\times \Gr)} & {G_\cO\backslash (G_\cK\times^{G_\cO}\Gr)} \\
			{G_\cO\backslash\Gr\times G_\cO\backslash \Gr} && {} & G_\cO\backslash\Gr
			\arrow["r", dotted, from=1-3, to=2-1]
			\arrow["\overline p"',from=1-2, to=2-1]
			\arrow["\sim", from=1-2, to=1-3]
			\arrow["\overline m", from=1-3, to=2-4]
	\end{tikzcd}\end{equation}
	where the action of the second copy of $G_\cO$ on $G_\cK\times G_\cO$ is the antidiagonal one described in \eqref{eq:convdiag}, all other actions are induced by the left multiplication action of $G_\cO$ on $G_\cK$. Then:\begin{itemize}
		\item the horizontal map is an equivalence (and therefore a map $r$ as above is defined such that the diagram commutes);
		
		\item a $G_\cO$-equivariant perverse sheaf on $\Gr$ is the same thing as a perverse sheaf on $G_\cO\backslash \Gr$, in the following sense. Note that the latter is an ind-stack of finite type, hence we can define $$\Perv(G_\cO\backslash \Gr):=\colim_\mu\Perv\left(G_\cO^{(j_\mu)}\backslash \Gr_{\leq \mu}\right),$$ where the expression in the colimit is meant in the sense of \cite[Section 4]{Laszlo-Olsson-perverse}. Finally, the claimed equivalence follows from \cite[Remark 5.5]{Laszlo-Olsson-perverse}. Similar considerations can be done for the other vertices of the diagram \eqref{stacky-convolution};
		
		\item under the identification at the previous point, the convolution product is equivalently described (up to the perverse truncations of the derived tensor product) by \begin{equation}\label{perverse-convolution-formula}\cA_1\star \cA_2=\overline m_*(r^*(\cA_1\boxtimes \cA_2)).\end{equation} Note that pullbacks and pushforwards are defined, at the level of the terms in the colimit at the previous point, as pullback and pushforwards of elements of the derived category.
		
		In particular, we can say that the diagram of stacks
		\begin{equation}\label{hecke-conv-appendix}\begin{tikzcd}
			& {G_\cO\backslash (G_\cK\times^{G_\cO}\Gr)} \\
			{G_\cO\backslash\Gr\times G_\cO\backslash\Gr} && \hck
			\arrow["r", from=1-2, to=2-1]
			\arrow["{\overline m}"', from=1-2, to=2-3]
		\end{tikzcd}\end{equation} ``correctly models'' the convolution product of $G_\cO$-equivariant perverse sheaves over the affine Grassmannian (up to the perverse truncation of the derived tensor product appearing in the original definition).
	\end{itemize} 
\end{rem}
\begin{lem}The map $r$ can be described as $$r(\cF_0, \cF_1, \cF_2,\eta_1,\eta_2)=((\cF_0,\cF_1,\eta_1),(\cF_1,\cF_2,\eta_2)).$$ 
\end{lem}
\begin{proof}
	A priori, $r$ works as follows: choose $$\mu_0:\cF_0\triv\cT$$
	$$\mu_1:\cF_1\triv\cT$$
	$$\alpha_1:\cF_1|_{\Spec\C\laurent}\triv\cT|_{\Spec\C\laurent}$$
	$$\alpha_2:\cF_2|_{\Spec\C\laurent}\triv\cT|_{\Spec\C\laurent}$$
	such that $\alpha_1^{-1}\mu_0|_{\Spec\C\laurent}=\eta_1,\alpha_2^{-1}\circ \mu_1|_{\Spec\C\laurent}\simeq \eta_2$.
	
	Then by definition $$r(\cF_0,\cF_1,\cF_2,\eta_1,\eta_2)=((\cT,\cF_1,\alpha_1^{-1}),(\cT,\cF_2,\alpha_2^{-1})).$$ But now, there are squares of isomorphisms on $\Spec\C\laurent$
	$$\begin{tikzcd}\cT\arrow[r,"\alpha_1^{-1}"]&\cF_1\\
		\cF_0\arrow[u, "\mu_0|_{\Spec\C\laurent}"]\arrow[r,"\eta_1"]& \cF_1\arrow[u,"\id"] 
	\end{tikzcd}$$$$
	\begin{tikzcd}\cT\arrow[r,"\alpha_2^{-1}"]&\cF_2\\
		\cF_1\arrow[u, "\mu_1|_{\Spec\C\laurent}"]\arrow[r,"\eta_2"]& \cF_2\arrow[u,"\id"]\end{tikzcd}$$ where the vertical maps are induced by the isomorphisms on $\Spec\C\taylor$, and we conclude.
\end{proof}
\begin{rem}Note that this map does not coincide with the map induced by the isomorphism $(\id,m):G_\cK\times^{G_\cO}\Gr\to \Gr\times\Gr$ (cf. \cite[(1.2.14)]{Zhu}), which is instead described by $$(\cF_0, \cF_1, \cF_2,\eta_1,\eta_2)\mapsto ((\cF_0, \cF_1,\eta_1),(\cF_0,\cF_2,\eta_1\circ\eta_2)).$$ Indeed, to prove that this is the same map we would need to build a commuting square
	
	$$
	\begin{tikzcd}\cT\arrow[r,"\alpha_2^{-1}"]&\cF_2\\
		\cF_0\arrow[u]\arrow[r,"\eta_2\eta_1"]& \cF_2\arrow[u,"\id"]\end{tikzcd}$$
	
	but here there is no reason why $\alpha_2\eta_2\eta_1$ would extend to the complete disk: we know that there exist $\widetilde\mu, \widetilde\alpha$ such that $\eta_2\eta_1=\widetilde\alpha^{-1}\widetilde\mu|_{\mathring D}$, which in general lies in $G_\cK$ and not in $G_\cO$.\end{rem}

\begin{rem}\label{appendix-truncated-convdiag}
Let $N\in \nn, j\geq j_N$. Let $G_{\cK}^{(N,j)}=G_{\cK}\times^{G_\cO}G_{\cO}^{(j)}$. The diagram \eqref{hecke-conv-appendix} admits a truncated version \begin{equation}\label{hecke-conv-appendix-truncated}
		\begin{tikzcd}
			& {\hck_2^{(N,2j)}} \\
		{\hck^{(N,j)}\times \hck^{(N,j)}} && \hck^{(2N,2j)}
		\arrow["r", from=1-2, to=2-1]
		\arrow["{\overline m}"', from=1-2, to=2-3]
	\end{tikzcd}
\end{equation}

where \begin{gather*}\hck^{(N,j)}=G_{\cO}^{(j)}\backslash\Gr^{(N)}\\ \hck_2^{(N,2j)}=G_{\cO}^{(2j)}\backslash (G_\cK^{(N,j)}\times^{G_\cO^{(j)}}\Gr^{(N)}).\end{gather*} and similar.
\end{rem}
We can assemble these objects into the following:

\begin{defin}\label{Hck-as-ind-pro}
	We denote the ind-pro-stack $$``\colim_{N\in\nn}"``{\lim_{j\geq j_N}}"\hck^{(N,j)}$$ by $$\hck,$$ and similarly
	$$\hck_2=``\colim_{N\in\nn}"``{\lim_{j\geq j_N}}"\hck_2^{(N,j)}.$$
\end{defin}

One can check that the convolution formula arising from push-pull along this diagram and \eqref{perverse-convolution-formula} agree. In particular, the choice of $j_N, j$ does not change the formula.

To see this, it suffices to note the following. First of all, by definition of the categories of perverse sheaves, the original formula for the convolution product is actually the one arising from the diagram $$	\begin{tikzcd}
	& {\hck_2^{(N)}} \\
	{\hck^{(N)}\times \hck^{(N)}} && \hck^{(2N)}
	\arrow["r", from=1-2, to=2-1]
	\arrow["{\overline m}"', from=1-2, to=2-3]
\end{tikzcd}$$ 

where \begin{gather*}\hck^{(N)}=G_{\cO}\backslash\Gr^{(N)}\\ \hck_2^{(N)}=G_{\cO}\backslash (G_\cK^{(N)}\times^{G_\cO}\Gr^{(N)}).\end{gather*}  and similar.

If $j\geq j_{N}$, we can further truncate the diagram to \eqref{hecke-conv-appendix-truncated}, and the convolution formula is again the same because \begin{itemize}\item $\Perv_{G_\cO}(\Gr^{(N)};R)\simeq \Perv_{G_{\cO}^{(j)}}(\Gr^{(N)};R)$.
	\item $G_\cK^{(N)}\times^{G_\cO}\Gr^{(N)}\simeq G_{\cK}^{(N,j)}\times^{G_{\cO}^{(j)}}\Gr^{(N)}$ by definition.
	\end{itemize}

Notice that the same arguments work at the level of equivariant constructible sheaves (see e.g. \cite[2.4.3]{Achar-Riche}).

\subsection{Models for the spherical Hecke category}\label{Section-models-Sph}

The main result of this paper \cref{final-theorem} is about $\Sph(G; \cE)^\topp$, a category of equivariant constructible sheaves on $\Gr^\an$, the analytification of the affine Grassmannian, with very general coefficients: namely, any presentable stable $\infty$-category $\cE$ works. In the present subsection, we will remark how that category (or more precisely, its small version $\Sph(G,;\cE)^\locc$ appearing in \cref{corollary-small-subcategory}) specializes to familiar ones in two special cases: with coefficients in finite/profinite/$\ell$-adic rings, or with complex coefficients. In the first case, there is an interpretation of the mentioned category in terms of algebraic constructible sheaves over the affine Grassmannian. Hence, let us gather a couple observations about those.

Let $\sS$ be the stratification by Schubert cells of the affine Grassmannian (\cref{perverse-and-stratification}). For $N\in\nn$, let $\sS^{(N)}$ be its restriction to $\Gr^{(N)}$.

\begin{defin}\label{pro-constructibles}Let $G$ be a reductive group over $\C$, and $R$ a finite ring. We define $$\Cons^\fd_{G_\cO}(\Gr^{(N)},\mathscr S^{(N)}; R)=\Cons^\fd_{G_{\cO}^{(j_N)}}(\Gr^{(N)}, \sS^{(N)};R)$$ with the notation of \cref{equivariant-constructible-category-schemes}.
\end{defin}

By \cref{equivariant-GAGA}, this category is equivalent to its counterparts defined in the complex-analytic world $$\Cons^\fd_{(G_{\cO}^{(j_N)})^\an}((\Gr^{(N)})^\an,(\sS^{(N)})^\an;R)$$ (\cref{top-constructible-sheaves-with-ring-coefficients}).

 As a consequence, by the same proof of \cref{unipotent-equivalence}, the definition is independent of $j_N$. One could also reach the same conclusion directly on the algebro-geometric side (\cite[Proposition 10.2.8 with $K=(0)$]{Achar-Riche}).

We can thus define
	\begin{gather*}
	\Cons^\fd_{G_\cO}(\Gr_G,\sS; R)=\colim_{N\geq 0}\Cons_{G_\cO}^\fd(\Gr^{(N)},\mathscr S^{(N)}; R)\\
		\cD^\fd_{\textup{c},G_\cO}(\Gr^{(N)}; R)=\cD^\fd_{\textup{c},G_{\cO}^{(j_N)}}(\Gr^{(N)}; R)\\
		\cD^\fd_{\textup{c},G_\cO}(\Gr; R)=\colim_{N\geq 0}\cD_{\textup{c},G_\cO}^\fd(\Gr^{(N)};R).
	\end{gather*}
which again will agree with their complex-analytic counterparts (by definition):
\begin{equation}\label{Grassmannian-GAGA}\begin{gathered}\Cons^\fd_{G_\cO}(\Gr_G,\sS; R)\simeq\Cons^\fd_{G_\cO^\an}(\Gr^\an,\sS^\an;R)\\
\cD^\fd_{\textup{c},G_\cO}(\Gr; R)\simeq \cD^\fd_{\textup{c},G_\cO^\an}(\Gr^\an; R).
\end{gathered}\end{equation}

\begin{rem}Note that the forgetful functor $$\Perv_{G_\cO}(\Gr,\sS;R)\to \Perv(\Gr,\sS;R)$$ is an equivalence (see for example \cite[Section 4.4]{Baumann-Riche}), but $$\Cons^\fd_{G_\cO}(\Gr,\sS;R)\to \Cons^\fd(\Gr,\sS;R)$$ is not.
\end{rem}
On the other hand:

\begin{prop}\label{equivariant-constructibles-are-constructible-Schubert}The map $$\Cons_{G_\cO}^\fd(\Gr,\sS;R)\to \cD^\fd_{\textup{c},G_\cO}(\Gr;R)$$ is an equivalence.\end{prop} 
\begin{proof}By definition, the claim can be checked on each $\Gr^{(N)}$, where it is true by \cref{lemma-noetherian-induction}.\end{proof}

\begin{rem}\label{remark-l-adic-GAGA}All these results, with the right definitions, are true for profinite and $\ell$-adic coefficients (cf. \cref{remark-schemes-profinite-coefficients}, \cref{profinite-GAGA}) as well. We will only mention such coefficients in the following \cref{comparison-with-algebraic-l-adic}, which serves as a connection with other definitions of the small spherical Hecke category appearing in the literature and only uses the contents of this subsection. Hence, we will not delve into further details about profinite and $\ell$-adic coefficients. The interested reader is encouraged to look at the references mentioned in \cref{remark-schemes-profinite-coefficients}.
\end{rem}


\begin{defin}\label{spherical-categories}
	Let $G$ be a complex reductive group and $R$ a discrete, prodiscrete or $\ell$-adic ring. The \textit{topological spherical Hecke category} of $G$ with coefficients in $R$ is $$\Sph(G;R)^\topp=\Cons_{G_\cO^\an}(\Gr^\an;R).$$
	The \textit{small spherical Hecke category} of $G$ with coefficients in $R$ as $$\Sph(G;R)^\textup{loc.c.}=\Cons_{G_\cO^\an}^\fd(\Gr^\an;R).$$
\end{defin}

Usually, in the Geometric Langlands Program, a renormalization of $\Sph(G)$ is used:

\begin{defin}\label{renormalized-spherical-category}
	Let $G$ be a complex reductive group and $R$ be a discrete, prodiscrete ring or $\ell$-adic ring. The \textit{renormalized spherical Hecke category} of $G$ with coefficients in $R$ is $$\Sph(G;R)^\textup{ren}=\Ind(\Sph(G;R)^\textup{loc.c}).$$
\end{defin}

\begin{rem}\label{comparison-with-algebraic-l-adic}{}Let $R$ be finite, profinite or $\ell$-adic. By \eqref{Grassmannian-GAGA} and \cref{equivariant-constructibles-are-constructible-Schubert},
$$\Sph(G;R)^\locc\simeq \Cons_{G_\cO}^\fd(\Gr;R)\simeq \cD_{\textup{c},G_\cO}^\fd(\Gr;R)$$ (and same for its Ind-completion). In particular, with these coefficients, the small and the renormalized spherical Hecke category do not distinguish between the algebraic and the analytic setting. Therefore, the same is true for the main result of our paper (\cref{corollary-small-subcategory}), although the proof uses features of the analytic setting.\end{rem}

\begin{rem}\label{comparison-dmodules}On the other hand, we can consider discrete infinite rings such as $\C$. In this case, $\Sph(G;\C)^\locc$ and $\Sph(G;\C)^\ren$ admit an interpretation in terms of D-modules. Namely, by the Riemann-Hilbert correspondence we have that $$\dc^\fd(\Gr^\an;\C)\simeq \Dmod(\Gr)^\omega.$$  Let $\Dmod_{G_\cO}(\Gr)^\locc\subset \Dmod_{G_\cO}(\Gr)$ be the full subcategory spanned by objects which become compact after forgetting the equivariant structure (\cite[12.2.3]{Arinkin-Gaitsgory}). Then we have an equivalence $$\cD^\fd_{\textup{c},G_\cO}(\Gr^\an;\C)\simeq \Dmod_{G_\cO}(\Gr)^\locc$$
	Combined with \cref{noetherian-topological}, this provides an equivalence $$\Sph(G;\C)^\locc\simeq\Dmod_{G_\cO}(\Gr)^\locc.$$
\end{rem}


\section{Recollections and complements in stratified homotopy theory}\label{appendix-stratified-spaces}

\subsection{Stratified schemes and stacks} 

Let us denote by $\Sch_\C$ the category of complex schemes, and by $\Sch_\C^\lft$ the full subcategory of complex schemes, locally of finite type.
\begin{defin}\label{stratified-schemes}The category of \textit{stratified complex schemes} is defined as $\Str\Sch_\C=\Sch_\C\times_{\Top}\Str\Top$, where the map $\Sch_\C\to \Top$ sends a scheme $X$ to its underlying \textit{Zariski} topological space, and the other map is the evaluation at $[0]$.
	
	We also define the category $$\Str\Sch_\C^\lft$$ as the full subcategory of $\Str\Sch_\C$ spanned by those stratified schemes such that the stratification is finite constructible in the sense of \cite[Definition 1.2.1]{Exodromy}, and such that the underlying scheme is locally of finite type.
\end{defin}

\begin{rem}\label{stratified-algebraic-stacks}By looking at the \'etale topology, we can consider the site $(\Sch_\C,\ett)$ and the category
	
	$$\Stk_\C^\lft=\Shv_\ett(\Sch_\C^\lft; \Grpd)$$ of \'etale stacks locally of finite type.

	We can define $\str\ett$ as the topology whose coverings are \'etale coverings whose stratification is induced by the one on the base. This leads to defining the category 
	\begin{equation}\label{stratified-stacks}\Str\Stk^\lft_\C=\Sh_{\str\ett}(\Str\Sch^\lft_\C; \Grpd)\end{equation} of stratified \'etale stacks locally of finite type (stratified stacks for short).
	\end{rem}

	\begin{defin}\label{uni}Let $\uni\subset \Mor(\Str\Sch_\C^\lft)$ be the class of those morphisms $f:X\to Y$ which:
	\begin{itemize}\item are smooth morphisms on the underlying schemes
	\item there exists a smooth stratified scheme $S$, together with stratified maps $X\to S, Y\to S$ which make the triangle $$\begin{tikzcd}X\arrow[rr,"f"]\arrow[dr]&&Y\arrow[dl]\\
	&S&\end{tikzcd}$$ commute and such that $f$ is a relative quotient by the action of a smooth relative group scheme $H\to S$ whose restriction $H\times_SS_\alpha\to S_\alpha$ is a torsor by a smooth unipotent group $H_\alpha$. In particular, \'etale-locally in $S_\alpha$, $H\times_SS_\alpha\to S_\alpha$ splits as $H_\alpha\times S_\alpha\to S_\alpha$ for some unipotent group scheme $H_\alpha$.
	\end{itemize}
	By abuse of notation, we define $\uni\subset \Mor(\Str\TStk)$ to be the class of morphisms $\cX\to \cY$ which are representable and whose pullback to any map $Z\to \cY$, with $Z$ representable, belongs to $\tri$.

	We define $\Pro_\uni(\Str\Sch_\C^\lft)$ to be the full subcategory of $\Pro(\Str\Sch_\C^\lft)$ spanned by those pro-objects which can be presented as formal limits of cofiltered diagrams with transition maps belonging to $\uni$.
\end{defin}

\begin{rem}
The class $\uni$ (both at the level of schemes and stacks) is stable under pullbacks.
\end{rem}

\begin{defin}\label{stratified-prestacks}We define the $(2,1)$-category
	
	$$\PStrStk=\Fun((\Pro_\uni(\Str\Stk_\C^{\lft}))^\op, \Grpd).$$

\end{defin}

\subsection{Constructible sheaves on stratified schemes and stacks}\label{Section-Cons-schemes}

\begin{defin}[{\cite[Example 13.2.9]{Exodromy}}]\label{constructible-sheaves-on-schemes}
	Let $(Y,s)$ be a complex stratified scheme of finite dimension, and $R$ a finite ring. 
	The $\infty$-category of constructible sheaves $$\Cons^\fd(Y,s;R)$$ is defined as the full subcategory of $$\Shv_{\ett}(Y,\Mod_R)$$ spanned by those objects which, after restriction to each stratum, are locally constant and lisse (in the sense of \cite[Definition 13.2.6]{Exodromy}).
\end{defin}
There is also the notion of constructible sheaves with respect to {\it some} stratification (instead of a fixed one). 

\begin{recall}[{\cite[4.1]{Bernstein-Lunts}, \cite[Proposition 4.2.5]{WC}}]\label{varying-strat} Let $Y$ be a complex scheme locally of finite type, and $R$ a finite ring.
	We define  
	\begin{equation}\label{dc}\dc^\fd(Y;R)=\colim_{\sS\textup{ algebraic stratification of }Y}\Cons(Y,\sS; R)\subset \Shv_\ett(Y;R).\end{equation} since the colimit in the right-hand-side is filtered: indeed, one can refine two algebraic stratification by a common one\footnote{This is easy because we did not assume the strata to be smooth. For the smooth case, see \cite{MO-refining-stratifications}.}. When $Y$ is quasiprojective, this is the $\infty$-category of compact objects\footnote{See \cite[Proposition 4.2.5]{WC}. To see the equivalence between constructibility and condition (1) in \textit{loc. cit.}, one can apply noetherianity in order to find suitable finite stratifications of $Y$.} in $\Shv_\ett(Y;R)$.
\end{recall}

Constructible sheaves on stratified stacks can be defined by right Kan extension. However, we will only need the special case of quotient stacks (i.e. the case of equivariant constructible shaves on schemes), which we recall below.

\begin{defin}\label{equivariant-constructible-category-schemes}Let $(Y,s)$ be a stratified complex scheme of finite dimension, $H$ a group scheme of finite type acting on $Y$ in such a way that the action sends strata to strata, and let $R$ be a finite ring. 
	There is a simplicial diagram 
	
	\begin{equation}\label{simplicial-diagram-action}\begin{tikzcd}
			\dots & {(H\times H\times Y,s_2)} & {(H\times Y,s_1)} & {(Y,s)}
			\arrow[shift left=2, from=1-1, to=1-2]
			\arrow[shift right=2, from=1-1, to=1-2]
			\arrow[shift right=6, from=1-1, to=1-2]
			\arrow[shift left=6, from=1-1, to=1-2]
			\arrow[from=1-2, to=1-1]
			\arrow[shift left=4, from=1-2, to=1-1]
			\arrow[shift right=4, from=1-2, to=1-1]
			\arrow[from=1-2, to=1-3]
			\arrow[shift right=4, from=1-2, to=1-3]
			\arrow[shift left=4, from=1-2, to=1-3]
			\arrow[shift left=2, from=1-3, to=1-2]
			\arrow[shift right=2, from=1-3, to=1-2]
			\arrow[shift right=2, from=1-3, to=1-4]
			\arrow[shift left=2, from=1-3, to=1-4]
			\arrow[from=1-4, to=1-3]
	\end{tikzcd}\end{equation}
	
	where $s_i$ is the stratification on $\overbrace{H\times\dots\times H}^{i}\times Y$ which is trivial on the group factors and $s$ on the last factor. As usual, in the left direction are induced by the identity element of $G$ in various ways, and maps in the right direction are induced by combinations of the action and the projections.
	
	The $\infty$-category $\Cons^\fd_{H}(Y,s;R)$ of $H$-equivariant constructible sheaves on $Y$ with respect to the stratification $s$ is defined as the limit of the cosimplicial diagram (induced by pullback of sheaves from \eqref{simplicial-diagram-action})
	
	\begin{equation}\label{alg-cosimplicial-diagram-Cons}\begin{tikzcd}
			\dots & {\Cons^\fd(H\times H\times Y,s_2;R)} & {\Cons^\fd(H\times Y,s_1;R)} & {\Cons^\fd(Y,s;R)}
			\arrow[shift left=2, from=1-2, to=1-1]
			\arrow[shift right=2, from=1-2, to=1-1]
			\arrow[shift right=6, from=1-2, to=1-1]
			\arrow[shift left=6, from=1-2, to=1-1]
			\arrow[from=1-1, to=1-2]
			\arrow[shift left=4, from=1-1, to=1-2]
			\arrow[shift right=4, from=1-1, to=1-2]
			\arrow[from=1-3, to=1-2]
			\arrow[shift right=4, from=1-3, to=1-2]
			\arrow[shift left=4, from=1-3, to=1-2]
			\arrow[shift left=2, from=1-2, to=1-3]
			\arrow[shift right=2, from=1-2, to=1-3]
			\arrow[shift right=2, from=1-4, to=1-3]
			\arrow[shift left=2, from=1-4, to=1-3]
			\arrow[from=1-3, to=1-4]
	\end{tikzcd}\end{equation}
	
\end{defin}


\begin{recall}\label{equivariant-varying-strat}Let $Y$ be a finite-dimensional scheme, $H$ a group scheme of finite type acting on $Y$, $R$ a finite ring. The $\infty$-category $\cD_{\textup{c},H}^\fd(Y;R)$ of $H$-equivariant constructible sheaves on $Y$ is the limit of the diagram (induced by pullback of sheaves from \eqref{simplicial-diagram-action})
	\begin{equation}\label{cosimplicial-diagram-dc}\begin{tikzcd}
			\dots & {\dc^\fd(H\times H\times Y;R)} & {\dc^\fd(H\times Y;R)} & {\dc^\fd(Y;R)}
			\arrow[shift left=2, from=1-2, to=1-1]
			\arrow[shift right=2, from=1-2, to=1-1]
			\arrow[shift right=6, from=1-2, to=1-1]
			\arrow[shift left=6, from=1-2, to=1-1]
			\arrow[from=1-1, to=1-2]
			\arrow[shift left=4, from=1-1, to=1-2]
			\arrow[shift right=4, from=1-1, to=1-2]
			\arrow[from=1-3, to=1-2]
			\arrow[shift right=4, from=1-3, to=1-2]
			\arrow[shift left=4, from=1-3, to=1-2]
			\arrow[shift left=2, from=1-2, to=1-3]
			\arrow[shift right=2, from=1-2, to=1-3]
			\arrow[shift right=2, from=1-4, to=1-3]
			\arrow[shift left=2, from=1-4, to=1-3]
			\arrow[from=1-3, to=1-4]
	\end{tikzcd}\end{equation}
\end{recall}

\begin{rem}\label{remark-schemes-profinite-coefficients}One can define constructible sheaves and equivariant constructible sheaves on stratified schemes (both with respect to a fixed stratification or not) with coefficients in profinite or $\ell$-adic rings, cf. \cite[§6.1]{Behrend}, \cite[Recollection 13.7.7, 13.8.7]{Exodromy}. Unlike the case of constructible sheaves on stratified topological spaces (\cref{profinite-etc}), this definition is not formal. However, in this paper we are only interested in proving that with the correct definitions a GAGA principle holds (\cref{profinite-GAGA}). Since the profinite and $\ell$-adic case arise via limits and filtered colimits from the finite case, it will suffice to prove the finite case (\cref{algebraic-equals-topological-cons}).
\end{rem}

In the setting of \cref{equivariant-varying-strat}, if one assumes that there are finitely many orbits of $H$, these orbits form a stratification themselves (\cite{MO-Gr-is-Whitney}), which we denote by $s$. Let $R$ be a finite ring. We have a pullback square of stable $\infty$-categories
\begin{equation}\label{cons-square}\begin{tikzcd}  
		\Cons^\fd_{H}(Y,s;R)\arrow[r]\arrow[d,hook]& \Cons^\fd(Y,s;R)\arrow[d,hook]\\
		\cD_{\textup{c},H}^\fd(Y;R)\arrow[r]& \dc^\fd(Y;R).
\end{tikzcd}\end{equation}
Note that the vertical functors are fully faithful because the transition maps in the colimit \eqref{dc} are, and the colimit is filtered. 
Now, the horizontal arrows in \eqref{cons-square} are not equivalences.  On the contrary:

\begin{lem}\label{lemma-noetherian-induction}Let $R$ be a finite ring. Let $H$ be a group scheme acting on a finite-dimensional scheme $Y$, and suppose that there are finitely many orbits, forming a stratification $s$ of $Y$. Then the functor $\Cons_H^\fd(Y,s;R)\to\cD^\fd_{\textup c,H}(Y;R)$ is an equivalence.\end{lem}
\begin{proof}
	
	We already remarked that the functor is fully faithful. We now argue like in \cite{Sawin-answer}. Let us now consider an equivariant constructible sheaf $\cF$ with respect to some stratification, and let us prove that it is constructible with respect to the orbit stratification. Let us consider the maximal open subset $U$ of $Y$ where the sheaf is locally constant: this is nonempty since we know that $\cF$ is constructible with respect to some stratification, and any stratification of a finite-dimensional scheme has an open stratum. More subtly, there is an open dense stratum in every connected component of $Y$, and by taking unions of such over all the connected components of $Y$, we obtain an $U$ such that its complementary is closed of dimension strictly smaller than $\dim Y$. Also, $U$ is unique, since the union of two open subsets where $\cF$ is locally constant has again the property that $\cF$ is locally constant there. Now, $U$ is $H$-stable by equivariancy of $\cF$ and maximality of $U$ itself, and thus its complementary is $H$-stable as well and we can apply Noetherian induction.\end{proof}

\subsection{Stratified topological spaces and stacks}

The following definition is a particular case of \cite[8.2.1 and ff.]{Exodromy}.
\begin{defin}\label{strtspc}Let $\Top$ be the 1-category of topological spaces. The category of \textit{stratified topological spaces} is defined as
	$$\Str\Top_\C=\Fun(\Delta^1,\Top)\times_{\Top}\Poset,$$
	where the map $\Fun(\Delta^1,\Top)\to \Top$ is the evaluation at $1$, and $\Alex:\Poset\to \Top$ assigns to each poset $P$ its underlying set with the so-called Alexandrov topology (see \cite[Definition 1.1.1]{Exodromy}).
\end{defin}

Note that $\Str\Top$ is complete and cocomplete: see \cite[Remark 2.2]{WM}, and originally \cite[Proposition 6.1.4.1]{Nand-Lal-thesis} in a slightly different setting.

\begin{recall}Let $(X,s)$ be a stratified topological space. The notion of conical stratification as given in \cite[Definition A.5.5]{HA} amounts to asking that around each point of $X$ there exists a neighbourhood which is stratified homemorphic to $Z\times \tC(Y)$ where $Z$ is an unstratified space and $\tC(Y)$ is the stratified open cone of a stratified space $Y$. Being conical is the main condition required to a stratified space in order to make the so-called Exodromy Theorem (\cref{exodromy}) true.\end{recall}

For simplicity, in the present paper a stratified space is called \textit{conical} if satisfies several conditions altogether:

\begin{defin}\label{defin-con}The category $\Con$ is the full subcategory of $\Strat$ spanned by those stratified spaces $(X, s:X\to P)$ such that: \begin{itemize}
		\item $X$ is locally of singular shape in the sense of \cite[Definition A.4.15]{HA};
		\item the strata of $X$ are locally weakly contractible;
		\item $P$ satisfies the ascending chain condition;
		\item the stratification is conical in the sense of \cite[Definition A.5.5]{HA}.
	\end{itemize}
	This category admits finite products, essentially because the product of two cones is the cone of the join space. Therefore, there is a well-defined symmetric monoidal Cartesian structure $\Con^\times$.
\end{defin}

\begin{rem}[Condition of the frontier]\label{condition-of-the-frontier}
	One consequence of the conicality assumption is the condition of the frontier, i.e. the fact that given two strata $X_p,X_q$, such that $X_p$ is connected and such that $\overline{X_q}\cap X_p\neq\varnothing$, then $X_p\subset \overline{X_q}$. This is true locally by inspection of the conical model and is globalized by connectedness of $X_p$. Notice that, whenever this happens, we automatically get that $p\in s(\overline{X_q})\subset \overline{\{q\}}=P_{\leq q}$, i.e. $p\leq q$.
\end{rem}

\begin{notation}\label{stratified-topological-stacks} Let us consider the topology of local homeomorphisms on the topological side (which has however the same sheaves as the topology of open embeddings). We have thus the site $(\Top,\loc)$ and the category
$$\TStk=\Shv_\loc(\Top; \Grpd)$$ of topological stacks.

Let $\str\loc$ be the topology whose coverings are jointly surjective families of local homeomorphisms such that the stratification on the total space is induced by that on the base. This defines $(2,1)$-categories \begin{gather*}\Str\TStk=\Sh_{\str\loc}(\Str\Top; \Grpd)\\
\Str\TStk_\con=\Sh_{\str\loc}(\Con; \Grpd)\end{gather*}
 of (conically) stratified topological stacks.

\end{notation}

\begin{defin}Let $\alpha,\beta:(X,s)\to (Y,t)$ be two stratified maps between stratified topological spaces. Let $\widetilde s$ be the stratification of $[0,1]\times X$ induce by the projection $[0,1]\times X\to X$. A stratified homotopy between $\alpha$ and $\beta$ is a stratified map $$H:([0,1]\times X, \widetilde s)\to (Y,t)$$ such that $H(0,-)=f, H(1,-)=g$.
\end{defin}

\begin{defin}\label{stratified-homotopy-equivalence}A stratified homotopy equivalence between stratified topological spaces is a stratified map $f:(X,s)\to (Y,t)$ such that there exist a stratified map $g:(Y,t)\to (X,s)$ and stratified homotopies $fg\sim \id_Y, gf\sim \id_X$.
\end{defin}

\begin{rem}Note that the class of stratified homotopy equivalences is not closed under pullback (just like homotopy equivalences of topological spaces are closed under homotopy pullback but not under pullback).\end{rem}

\begin{rem}
	A stratified homotopy equivalence induces an isomorphism at the level of posets, and homotopy equivalences on each stratum.
\end{rem}

\begin{defin}\label{defin-smooth}Let $f:(X,s)\to (Y,t)$ be a morphism in $\Con$. We say that $f$ is \textit{stratified smooth} if, locally in the topology of stratified local homeomorphisms on $X$, it is of the form of a projection $(Y,t)\times \mathbb R^N\to (Y,t)$ for some $N$ (where $\rr^N$ is seen as a trivially stratified space).

We say that $f$ is a \textit{smooth stratified submersion} if it is stratified smooth in the above sense and, for each stratum $Y_\alpha$ of $(Y,t)$, the restriction $f_\alpha:f^{-1}(Y_\alpha)\to Y_\alpha$ satisfies the following property: locally in the topology of $Y_\alpha$, it splits as a product whose fiber is trivially stratified. In particular, $f_\alpha$ is a Serre fibration and its fiber is a topological manifold (by the smoothness condition above).

	We denote the class of smooth stratified submersions by $\subm$.
\end{defin}

\begin{rem}
	Smooth stratified submersions are closed under pullback.
\end{rem}

\begin{rem}\label{recall-smooth-base-change-sheaves}
If all stratifications are trivial, the notion of smooth stratified submersion recovers the notion of topological submersion of manifolds.

Also, any smooth stratified map is a \textit{shape submersion} in the sense of \cite[Definition 3.21]{Marco-six-functors}. In particular, if $f:(X,s)\to (Y,t)$ is stratified smooth and $\cE$ is a presentable stable category, \cite[Lemma 3.24]{Marco-six-functors} ensures the existence of a functor $f_\#:\Sh(X,s;\cE)\to \Sh(Y,t;\cE)$ which is left adjoint to $f^*$.

The adjunction $f_\#\dashv f^*$ satisfies a base change property \cite[Lemma, 3.25]{Marco-six-functors}.
\end{rem}

\begin{lem}\label{lower-sharp-preserves-constructibles}Let $\cE$ be a presentable category, and $p:(X,s)\to (Y,t)$ be in $\subm$. Then $p_\#:\Sh(X; \cE)\to \Sh(Y;\cE)$ restricts to a functor $\Cons(X,s;\cE)\to\Cons(Y,t;\cE)$, i.e. preserves constructible sheaves.\end{lem}
\begin{proof}Let $\cF$ be a constructible sheaf on $(X,s)$. Let $Y_\alpha$ be a stratum of $(Y,t), X_\alpha=p^{-1}(Y_\alpha), p_\alpha:X_\alpha\to Y_\alpha$ the restriction. The subspace $X_\alpha$ is in turn a stratum of $X$, because we assume that $p$ is a bundle on $Y_\alpha$ with trivially stratified fiber.
Note now that by the base change property recalled in \cref{recall-smooth-base-change-sheaves}, the restriction of $p_\#\cF$ to a stratum $Y_\alpha$ of $(Y,t)$ coincides with $p_{\alpha,\#}(\cF|_{X_\alpha}).$ Therefore, in order to prove that $p_\#$ preserves constructible sheaves, it suffices to prove that each $p_{\alpha,\#}$ preserves local systems. This condition is local in the topology of $Y_\alpha$, and therefore follows from \cite[Lemma 3.28]{HPT}.
\end{proof}
Since we have an adjunction $p_\#\dashv p^*$ at the level of categories of all sheaves, the following holds.
\begin{cor}\label{lower-sharp-is-constructible-left-adjoint}Under the hypotheses of \cref{lower-sharp-preserves-constructibles}, the induced functor $p_\#:\Cons(X,s;\cE)\to \Cons(Y,t;\cE)$ is left adjoint to $p^*:\Cons(Y,t;\cE)\to \Cons(X,s;\cE)$.\end{cor}

\begin{defin}\label{tri}Let $\tri\subset \Mor(\Str\Top)$ be the class of those morphisms $f:X\to Y$ which:
	\begin{itemize}\item are smooth stratified submersions in the sense of \cref{defin-smooth}
	\item when restricted to each stratum $Y_\alpha$ of $Y$, become trivial Serre fibrations (in particular, the stratification on the source becomes trivial).
	\end{itemize}
\end{defin}

\begin{defin}
	By abuse of notation, we denote by $\tri\subset \Mor(\Str\TStk)$ be the class of morphisms $\cX\to \cY$ which are representable and whose pullback to any map $Z\to \cY$, with $Z$ representable, belongs to $\tri$.

	We define $\Pro_\tri(\Str\TStk)$ to be the full subcategory of $\Pro(\Str\TStk)$ spanned by those pro-objects which can be presented as formal limits of small cofiltered diagrams with transition maps belonging to $\tri$; we also define $\Pro_\tri(\Str\TStk_\con)$ to be the full subcategory of $\Pro(\Str\TStk_\con)$ spanned by those pro-objects which can be presented as formal limits of small cofiltered diagrams with transition maps belonging to $\tri$.
\end{defin}

\begin{rem}The class $\tri$ (both at the level of spaces and stacks) is stable under pullback.\end{rem}

\begin{defin}\label{stratified-topological-prestacks}We define the $(2,1)$-categories
	\begin{gather*}\PStrat=\Fun((\Pro_\tri(\Str\TStk))^\op, \Grpd).\\
	\PCon=\Fun((\Pro_\tri(\Str\TStk_\con))^\op, \Grpd).\end{gather*}

\end{defin}

\begin{rem}\label{Con-to-StrTStk}Note that there is a canonical fully faithful embedding $$\PCon\hookrightarrow\PStrat.$$ This is defined as follows. First of all, there is a functor $$\Str\TStk_\con\to\Str\TStk$$ given by left Kan extension, which preserves colimits. 
Then, this can be extended to a functor $$\Pro_\tri(\Str\TStk_\con)\to\Pro_\tri(\Str\TStk)$$ by using right Kan extension, and the new functor preserves cofiltered limits. 
Finally, we can extend the latter functor to $$\PCon\to \PStrat$$ by left Kan extension, and the new functor preserves all colimits.\end{rem}

\begin{defin}\label{defin-she-prestacks}
	A morphism $f:\cX\to\cY$ in $\StrTStk$ belongs to $\she'$ if it is representable and it can be presented as a small colimit of maps in $\she$. 

	A morphism $f:\cX\to \cY$ in $\Pro_\tri(\StrTStk)$ belongs to $\she''$ if it can be presented as $$``\lim_{j\in J}" f_{j}$$ where $J$ is small cofiltered and each $f_{j}$ is equivalent to a map in $\she'$ in $\StrTStk$.

	A morphism $f:\cX\to \cY$ in $\PStrat$ belongs to $\pshe$ if it can be presented as a colimit of maps in $\she''$.

	These classes restrict to classes of maps in the correspondent categories formed from $\Con$ instead of $\Str\Top$, and will be denoted in the same way.
\end{defin}

\begin{defin}\label{defin-subm-prestacks}
	A morphism $f:X\to\cY$ in $\StrTStk$ belongs to $\widetilde\subm$ if it is representable and its pullback to any representable belongs to $\subm$. 

	A morphism $f:\cX\to\cY$ in $\StrTStk$ belongs to $\subm$ if there is a commutative square 
	$$\begin{tikzcd}
	X\arrow[r]\arrow[d]&Y\arrow[d]\\
	\cX\arrow[r]&\cY
	\end{tikzcd}$$ 
	where $X,Y$ are representable, the vertical arrows belong to $\widetilde\subm$ and the top horizontal arrow belongs to $\subm$.

	A morphism $f:\cX\to \cY$ in $\Pro_\tri(\StrTStk)$ belongs to $\subm''$ if it can be presented as $$``\lim_{j\in J}" f_{j}$$ where $J$ is cofiltered and each $f_{j}$ is equivalent to a map in $\subm'$ in $\StrTStk$.

	A morphism $f:\cX\to \cY$ in $\PStrat$ belongs to $\psubm$ if for every $\mathcal Z\in \Pro_\tri(\Str\TStk)\hookrightarrow\PStrat$ and a map $\mathcal Z\to \cY$, the pullback in $\PCon$ $$\cX\times_\cY\mathcal Z$$ belongs to the essential image of the Yoneda embedding $\Pro_\tri(\StrTStk)\hookrightarrow \PStrat$ and the canonical map $$\cX\times_\cY\mathcal Z\to \mathcal Z$$ is equivalent to a map in $\subm''$ in $\Pro_\tri(\StrTStk)$.

	These classes restrict to classes of maps in the correspondent categories formed from $\Con$ instead of $\Str\Top$, and will be denoted in the same way.
\end{defin}

\begin{rem}\label{subm-pullback}
The classes of maps $\psubm$ in $\PStrat$ and in $\PCon$ are closed under pullbacks.
\end{rem}

\subsection{Constructible sheaves on stratified topological spaces and stacks}\label{Section-Cons-spaces}

\begin{recall}\label{exodromy}Let $(Y,s)\in \Con$. The $\infty$-category $$\Cons(Y,s;\cS)$$ of space-valued constructible sheaves is defined in \cite[Definition A.5.2]{HA} and proven to be equivalent (by \cite[Theorem A.9.3]{HA} , nowadays known as the \textit{Exodromy Theorem in topology}), to the $\infty$-category $$\Fun(\Exit(Y,s),\cS).$$ Here $\Exit(Y,s)$ is a small $\infty$-category called the $\infty$-category of exit paths on $(Y,s)$ (see \cite[Definition A.6.2]{HA}, where it is denoted by $\Sing^A(Y)$, $A$ being the poset associated to the stratification). \end{recall}

\begin{rem}\label{hyperconstructibles-are-constructibles}Every space which is locally of singular shape in the sense of \cite[Definition A.4.15]{HA} has the property that every locally constant sheaf is automatically hypercomplete (\cite[Corollary A.1.17]{HA}).\end{rem}

One can consider coefficient categories different than $\cS$, namely any presentable stable $\infty$-category.

\begin{defin}\label{topological-constructibles}Let $(Y,s)\in \Con$ and $\cE$ a presentable stable $\infty$-category. We define $$\Cons(Y,s;\cE)$$ as the full subcategory of $\Shv(Y;\cE)$ spanned by constructible sheaves in the sense of \cite[Definition 2.27]{Exodromy-PT} (hypercompleteness in \textit{loc. cit.} can be ignored thanks to \cref{hyperconstructibles-are-constructibles}).
\end{defin}


\begin{thm}[\cite{Exodromy-PT}]\label{Exodromy-extended-coefficients}
	Let $(Y,s)\in \Con$, and $\cE$ a presentable stable $\infty$-category. Then $$\Cons(Y,s;\cE)\simeq \Fun(\Exit(Y,s),\cE).$$
\end{thm}
\begin{proof}
	This is the content of \cite[Theorem 5.17]{Exodromy-PT} together with \cite[Remark 5.18]{Exodromy-PT} and \cref{hyperconstructibles-are-constructibles}.
\end{proof}

\begin{rem}\label{change-of-coefficients-presheaves} Note that, for $\cc$ any small $\infty$-category and $\cE$ presentable, \begin{equation}\Fun(\cc,\cE)\simeq \Fun(\cc,\cS)\otimes \cE.\end{equation} 
	This follows from the fact that if $\cc$ is small then $\Fun(\cc,\cS)$ is presentable and from the formula $$\cA\otimes \cB\simeq \Fun^{\textup{R}}(\cB^\op, \cA)$$ \cite[Proposition 4.8.1.17]{HA} for $\cA,\cB$ presentable. As a consequence, under the hypotheses of \cref{Exodromy-extended-coefficients}, we have $$\Cons(Y,s;\cE)\simeq\Cons(Y,s;\cS)\otimes\cE.$$
\end{rem}
The following \cref{profinite-etc} and \cref{top-constructible-sheaves-with-ring-coefficients}, and related results in the rest of the paper, can more elegantly be formulated and proven in the pro-\'etale setting. Only by simplicity, we choose not to do so and give ad hoc definitions (which are easily recovered from that formalism).

\begin{defin}\label{profinite-etc}Let $R$ be a discrete ring. We denote \begin{gather*}\Mod_R^\cont=\Mod_R\\
		\Mod_R^\fd=\Mod_R^{\cont,\fd}=\Perf_R.\end{gather*} Let us also denote by $\Mod_R^\tors, \Mod_R^{\fd,\tors}$ the respective full subcategories of torsion modules.
	
	Let $R=\lim_{i\in \cI} R_i$ be a prodiscrete ring, i.e. $\cI$ is cofiltered, $R_i$ is discrete for each $i$ and $R$ has the limit topology.
	
	Then we define $$\Mod^\cont_R=\lim_{i\in \cI} \Mod_{R_i}$$ as a limit in $\Prl$ (transition maps are given by tensor product) and $$\Mod_R^{\cont,\fd}=\lim_{i\in \cI} \Perf_{R_i}.$$ In the same way we define $\Mod^{\cont, \tors}=\lim_i\Mod^\tors_{R_i}$ and $\Mod^{\cont,\fd,\tors}=\lim_i\Mod^{\fd, \tors}_{R_i}$.
	
	Let $R$ be a finite extension of $\Q_\ell$ (from now on an ``$\ell$-adic ring''). We define \begin{gather*}\Mod^\cont_R=\textup{cofib}[\Mod^{\cont, \textup{tors}}_{\cO_R}\hookrightarrow \Mod^{\cont}_{\cO_R}]\\
		\Mod^{\cont,\fd}_R=\textup{cofib}[\Mod^{\cont,\fd, \textup{tors}}_{\cO_R}\hookrightarrow \Mod^{\cont, \fd}_{\cO_R}]\end{gather*} where the cofibers are taken respectively in $\PrLst$ and in $\Catex$.
	Finally, let $R$ be an algebraic extension of $\Q_\ell$. 
	Then we define \begin{gather*}\Mod^\cont_R=\colim_{\Q_\ell\subset E\subset R\textup{ finite subextension}}\Mod^\cont_{E}\\
		\Mod_R^{\cont,\fd}=\colim_{\Q_\ell\subset E\subset R\textup{ finite subextension}}\Mod^{\cont,\fd}_{E}.\end{gather*}\end{defin}

Note that since the inclusion functors $\PrLst\hookrightarrow\Prl$ and $\Catex\hookrightarrow \Cat$ are closed under limits and filtered colimits, all of the above are stable $\infty$-categories.

\begin{rem}\label{top-constructible-sheaves-with-ring-coefficients}If $R$ is a discrete ring, we denote \begin{gather*}\Cons(Y,s;R)=\Cons(Y,s,\Mod_R)\\
\Cons^\fd(Y,s;R)=\Cons^\fd(Y,s;\Mod_R^\fd).\end{gather*}
Let $R=\lim_i R_i$ be a profinite ring. We can apply \cref{Exodromy-extended-coefficients} with $\cc=\Mod_R^\cont$ and obtain \begin{equation}\label{Exodromy-Mod-R}
		\Cons(Y,s;\Mod^\cont_R)\simeq \Fun(\Exit(Y,s),\Mod^\cont_R)\simeq\lim_i\Cons(Y,s; R_i)
	\end{equation} which we denote by $\Cons(Y,s;R)$.
	This equivalence restricts to an equivalence $$\Cons^\fd(Y,s;R)\simeq\Fun(\Exit(Y,s),\Mod^{\cont,\fd}_R)\simeq \lim_i\Cons(Y,s;R_i),$$ which we denote by $\Cons^\fd(Y,s;R)$. Note that, in general, this is \textit{not} the category of compact objects of $\Cons(Y,s;R)$.
	
	When $R$ is a finite extension of $\Q_\ell$, by \cref{change-of-coefficients-presheaves} we have \begin{gather*}\Cons(Y,s;\Mod^\cont_R)\simeq \Fun(\Exit(Y,s),\Mod_R^\cont)\simeq\\ \Fun(\Exit(Y,s),\cS)\otimes \textup{cofib}(\Mod^{\cont,\textup{tors}}_{\cO_R}\to \Mod^{\cont}_{\cO_R})\simeq\\
		\textup{cofib}(\Cons^{\textup{tors}}(Y,s;\cO_R)\to\Cons(Y,s;\cO_R)).\end{gather*} When $R$ is an algebraic extension of $\Q_\ell$, we have $$\Cons(Y,s;\Mod^\cont_R)\simeq \colim_{\Q_\ell\supset E\supset R\textup{ finite subextension}}\Cons(Y,s;\Mod^\cont_E)$$
	which we denote by $\Cons(Y,s;R)$. We also define, in all previous cases of $R$, $$\Cons^\fd(Y,s;R)=\Cons^\fd(Y,s;\Mod^{\cont, \fd}_R).$$
\end{rem}

\begin{recall}\label{topological-dc}Let $Y$ be a topological space, $\cE$ a stable presentable $\infty$-category. We define $$\dc(Y;\cE)=\colim_{(Y,s)\textup{ conical stratification}}\Cons(Y,s;\cE).$$\end{recall}

As we will see in \cref{subsection-correspondences}, the functor $\Cons(-;\cE):\Con^\op\to \Prl_\cE$ can be right Kan extended to $\ConStk$. We consider here the case of quotient stacks, i.e. the case of equivariant constructible sheaves over stratified topological spaces.

\begin{defin}\label{rem-alg-top-cons}Let $(Z,t)$ be a stratified topological space and $K$ a topological group acting on $Z$ compatibly with $t$. Let $\cE$ be a presentable stable $\infty$-category. We define the the category of $K$-equivariant $\cE$-valued constructible sheaves on $Z$ $$\Cons_{K}(Z,t;\cE)$$ as the limit of the diagram (induced by pullback of sheaves along \eqref{simplicial-diagram-action}) \begin{equation}\label{cosimplicial-diagram-Cons}\begin{tikzcd}
			\dots & {\Cons(K\times K\times Z,t_2;\cE)} & {\Cons(K\times Z,t_1;\cE)} & {\Cons(Z,t;\cE)}
			\arrow[shift left=2, from=1-2, to=1-1]
			\arrow[shift right=2, from=1-2, to=1-1]
			\arrow[shift right=6, from=1-2, to=1-1]
			\arrow[shift left=6, from=1-2, to=1-1]
			\arrow[from=1-1, to=1-2]
			\arrow[shift left=4, from=1-1, to=1-2]
			\arrow[shift right=4, from=1-1, to=1-2]
			\arrow[from=1-3, to=1-2]
			\arrow[shift right=4, from=1-3, to=1-2]
			\arrow[shift left=4, from=1-3, to=1-2]
			\arrow[shift left=2, from=1-2, to=1-3]
			\arrow[shift right=2, from=1-2, to=1-3]
			\arrow[shift right=2, from=1-4, to=1-3]
			\arrow[shift left=2, from=1-4, to=1-3]
			\arrow[from=1-3, to=1-4]
	\end{tikzcd}\end{equation}
	where $s_i$ is the stratification on $\overbrace{H\times\dots\times H\times Y}^{i}$ which is trivial on the group factors and $s$ on the last factor, and the diagram is the simplicial diagram encoding the action of $H$ on $Y$.
\end{defin}

\begin{defin}Let $(Y,s)$ be a stratified topological space, $H$ a topological group acting on $Y$ compatibly with $s$, and let a presentable stable $\infty$-category. We define $$\Cons_{H}^\fd(Y, s;\cE)=\Cons_H(Y,s;\cE)\times_{\Cons(Y,s;\cE)} \Cons^\fd(Y,s;\cE)\simeq \lim_{n\in\bDelta}\Cons^\fd(H^n\times Y, s_n;\cE)$$\end{defin}

\begin{lem}\label{noetherian-topological}Let $\cE$ be a symmetric monoidal presentable stable $\infty$-category. Let $K$ be a topological group acting on a topological space $Z$ locally of singular shape, and suppose that the orbits form a conical stratification $t$ of $Z$. Then the functor $\Cons_K^\fd(Z,t;\cE)\to\cD^\fd_{\textup c,K}(Z;\cE)$ is an equivalence.\end{lem}
\begin{proof}
	As in the proof of \cref{lemma-noetherian-induction}, we already know that the functor is fully faithful. Let us now consider an equivariant constructible sheaf $\cF$ with respect to some conical stratification $(Z,s:Z\to P)$, and let us prove that it is constructible with respect to the orbit stratification. By the ascending chain condition for $P$, there exists at least an open stratum in $s$. In fact, all minimal depth strata (i.e. strata associated to a maximal $p$ in $P$) are open: indeed, let $p{}$ be a maximal element of $P$, and consider $\overline{Y\setminus Y_p}$. Suppose for simplicity that $Y_p$ is connected. If this subspace intersects $Y_p$ nontrivially, in particular there exists another stratum $Y_q,q\neq p$, such that $\overline{Y_q}\cap Y_p\neq \varnothing$. By the condition of the frontier \cref{condition-of-the-frontier}, this implies that $Y_p\subset \overline{Y_q}$ and hence $q\geq p$, contradiction. 
	
	We define $U$ as the maximal open union of strata such that $\cF$ is locally constant over $U$. It is nonempty because $\cF$ is constructible with respect to $s$ and by the above remark about minimal depth strata. Moreover, $U$ is unique because the union of two open subsets where $\cF$ is locally constant has again the property that $\cF$ is locally constant there. Also, the complementary of $U$ is concentrated in strata of non-minimal depth, again by the above remark. Now, $U$ is $K$-stable by equivariancy of $\cF$ and maximality of $U$ itself\footnote{To see this, one can argue as follows: suppose $U$ is not $K$-stable, and let $g\in K$ such that $gU\neq U$. Pick a point $v\in gU$, and an open subset $V\subset U$ such that $gV\ni v$ and $\cF|_V$ is constant with value $E\in \cE$. Then, by equivariance, we have that $\cF(gV)\simeq \cF(V)\simeq E$. Hence, $\cF$ is locally constant on $U\cup gV$, which contradicts maximality of $U$.}, and thus its complementary is $K$-stable as well and we can apply Noetherian induction on the maximum lenght of ascending chains of $P$.\end{proof}

\subsection{Stratified analytification}\label{Stratifications}

\begin{thm}[{\cite[Th\'eor\`eme et d\'efinition 1.1]{SGA1-XII}}]\label{Raynaud}
	Let $X$ be a scheme locally of finite type over $\C$. Then there exists an associated complex-analytic space $X^\an$, whose underlying set is $X(\C)$ and which represents the functor $$\{\textup{complex-analytic spaces}\}\to \sets$$
	$$Y\mapsto \Hom_{\textup{ ringed spaces }}(Y,X).$$
\end{thm}

We can forget the structure sheaf of holomorphic functions and recover an underlying Hausdorff topological space (which corresponds to the operation denoted by $|-|$ in \cite{SGA1-XII}). We thus obtain a functor which by  abuse of notation we denote by $$(-)^\an:\Sch_\C^\lft\to \Top$$ (instead of $|(-)^\an|$).

\begin{rem}\label{an-left-exact}This functor preserves finite limits (\cite[1.2]{SGA1-XII}) and sends \'etale coverings to coverings in the local homeomorphism topology (\cite[Proposition 3.1]{SGA1-XII}).\end{rem}

\begin{construction}\label{stratified-analytification}
There is a natural stratified version of the functor $(-)^\an$, namely the one which accounts for the stratification induced by the map of ringed spaces $u:S^\an\to S$ coming with the universal property: $$\Str\Sch^\lft_\C\to \Str\Top$$
	$$(S,s)\mapsto (S^\an, s\circ u).$$
	
\end{construction}

\begin{lem}[Smooth algebraic maps]\label{analytification-of-smooth}
	Let $X,Y$ be complex stratified schemes, locally of finite type, and $f:(X, s)\to (Y,t)$ be a smooth morphism in the sense of algebraic geometry, such that the stratification $s$ is the pullback of $t$ along $f$. Then $f^\an$ is a smooth morphism in the sense of \cref{defin-smooth}. In particular, the analytification of a trivially stratified smooth complex scheme locally of finite type is a topological manifold.
\end{lem}
\begin{proof}
	The trivially stratified case is \cite[Exercise 12.6.A]{Vakil}; the stratified statement follows straightforwardly.
\end{proof}

\begin{eg}[Relative stratified torsors with smooth fiber]\label{analytification-of-smooth-torsors}
	Let $X,Y,S$ be complex stratified schemes, locally of finite type, and $X\to S, Y\to S$ maps. Suppose that $S$ is smooth, and let $f:X\to Y$ be a smooth torsor, relative over $S$. In particular, \'etale locally on $Y$, the map has the form $Y\times_SZ\to Y$ where $z:Z\to S$ is a smooth morphism. Suppose further that the stratification of $Z$ is the pullback of the stratification of $S$ along $z$. Then by \cref{analytification-of-smooth} $f$ is a smooth stratified morphism.

	In our applications, we will usually have the further condition that, when restricted to each stratum $S_\alpha$of $S$, the map splits as $$\begin{tikzcd}X'\times S_\alpha\arrow[rr, "f'\times \id"]\arrow[rd]&& Y'\times S_\alpha\arrow[ld]\\
	&S_\alpha & \end{tikzcd}$$ for some smooth map $f':X'\to Y'$ of trivially stratified smooth schemes. In particular, when restricted to every stratum $Y_\alpha$ of $Y$, locally in the topology of $Y_\alpha$ $f$ splits as a product $X''\times Y_\alpha\to Y_\alpha$, for $X''$ a smooth trivially stratified scheme.
\end{eg}

\begin{prop}\label{uni-tri}The stratified analytification functor $(-)^\an:\Str\Sch_\C^\lft\to \Str\Top$ sends stratified \'etale coverings to stratified coverings in the topology of local homeomorphisms. Also, it sends morphisms in $\uni$ (cf. \cref{uni}) to morphisms in $\tri$.
\end{prop}

\begin{proof}The first part follows from \cite[Proposition 3.1]{SGA1-XII}.

For the second part, let $f:(X,s)\to (Y,t)$ be in $\uni$. Note first of all that $f^\an$ is in $\subm$ by \cref{analytification-of-smooth-torsors}. Let $f_\alpha$ denote the restriction of $f$ to a stratum $Y_\alpha$ of $Y$. \'Etale-locally in the topology of $Y_\alpha$, $f_\alpha$ splits as a product $Y_\alpha\times H_\alpha\to Y_\alpha$, where $H_\alpha$ is a unipotent group scheme (by definition, it is true after restriction to strata of $S$, and \textit{a fortiori} of $Y$). Now, a unipotent group scheme is isomorphic (as a scheme) to some affine space $\mathbb A^N_\C$ (see \cite[Theorem 8.0]{Unipotent-groups}). Therefore, since $H_\alpha$ is unipotent, its analytification is homeomorphic to $(\mathbb A^N_\C)^\an$ for some $N$, hence contractible. It follows that $f_\alpha^\an$ is locally trivial with contractible fiber, hence a trivial Serre fibration. 
\end{proof}

\begin{lem}\label{LKE-are-left-exact}
Let $F:\cc\to \cD, G:\cc\to \cE$ be functors, where $\cc,\cD,\cE$ all have finite limits, the slice $\cc_{/d}$ is small for every $d\in \cD$, and $\cE$ is an $\infty$-topos. Suppose also that $G$ preserves finite limits. Then the left Kan extension of $G$ along $F$ exists and preserves finite limits.
\end{lem}
\begin{proof}
The existence of the left Kan extension comes from the fact that $\cE$ has all small colimits. Let us call $H:\cD\to \cE$ this left Kan extended functor. In the case when $\cE=\cP(\cE_0)$ is a presheaf category, we can reduce easily to $\cE=\cS$. In that case, $G$ is left exact if and only if it is equivalent to a colimit of representables when seen as an object of $\cP(\cc^\op)$. Since the left Kan extension procedure $$\cP(\cc)\to \cP(\cD)$$ is a left adjoint, it preserves such colimits, and it also preserves representables: hence $H:\cD\to \cS$ is left exact as well.

In the general case, by assumption there is a left exact localization $L$ from a category of presheaves $\PSh(\cE_0)$ to $\cE$. Then we have a diagram 

\[\begin{tikzcd}
	\cc & \cE & {\PSh(\cE_0)} & \cE \\
	\cD
	\arrow["G", from=1-1, to=1-2]
	\arrow["F"', from=1-1, to=2-1]
	\arrow["R", from=1-2, to=1-3]
	\arrow["L", from=1-3, to=1-4]
	\arrow["{H_0}", from=2-1, to=1-3]
	\arrow["H"', from=2-1, to=1-4]
\end{tikzcd}\]

where $R$ is the fully faithful right adjoint to $L$, $H_0$ is the left Kan extension of $RG$ along $F$, and $H$ is the left Kan extension of $LRG$ along $F$. Since $R$ is fully faithful, the counit $LR\to\id_\cE$ is an equivalence, hence the latter Kan extension coincides with $\textup{LKE}_FG$; also, the rightmost triangle commutes. We proved in the previous case that $H_0$ is left exact. Since $L$ is left exact by hypothesis, $H$ is left exact.
\end{proof}

\begin{cor}\label{big-stratified-analytification}
	We have an extended analytification functor \begin{equation}\label{prestack-analytification}(-)^\an:\PStrStk\to\PStrat\end{equation}

	which preserves finite limits.
\end{cor}
\begin{proof}
The extension from schemes to stacks follows from the first part of \cref{uni-tri}. The extension from stacks to pro-$\uni$-objects follows from the second part of \cref{uni-tri} and is defined as a right Kan extension. The extension from pro-$\uni$-objects is done by left Kan extension (i.e. covariant functoriality of $\Fun(-, \Grpd)$).

The fact that the functor preserves finite limits comes from the following argument. Recall first of all that $$(-)^\an:\Str\Sch^\lft_\C\to\Str\Top$$ preserves finite limits (\cref{an-left-exact}). Now, both left and right Kan extensions of functor which preserve finite limits do again preserve finite limits: for right Kan extensions this is straightforward, whereas for left Kan extensions it follows from \cref{LKE-are-left-exact}.
\end{proof}

\begin{prop}\label{algebraic-equals-topological-cons}Let $(Y,s)$ be a qcqs complex stratified scheme, locally of finite type, and $R$ a finite ring. Then there is an equivalence of $\infty$-categories $$\Cons^\fd(Y,s;R)\simeq \Cons^\fd(Y^\an,s^\an;R).$$\end{prop} 

\begin{proof}The claim follows from the proof \cite[Proposition 12.6.4]{Exodromy} (in turn building upon \cite[Th\'eor\`eme XVI. 4.1]{SGA4})\footnote{Although the statement is given for categories of constructible sheaves with respect to some algebraic stratification (and its analytic counterpart) the proof actually proceeds by establishing the equivalence when the stratification is fixed, and then passes to the colimit.}.
\end{proof}

\begin{rem}\label{profinite-GAGA}
This proof only relies on the Exodromy theorem both on the algebraic\footnote{By Exodromy on the algebraic side, we mean \cite[Corollary 13.2.12]{Exodromy} for finite coefficients, and the references below in this remark for profinite and $\ell$-adic coefficients. Note that a ``coherent scheme'' in the terminology of  \textit{loc.cit.} is a qcqs scheme (\cite[0.11.15]{Exodromy}).} and topological side. Hence, it extends to the profinite and $\ell$-adic case (cf. \cref{remark-schemes-profinite-coefficients}) by \cite[Theorem 13.7.8]{Exodromy} (for the finite/profinite case) and \cite[Theorem 13.8.8]{Exodromy} (for algebraic extensions of $\Q_\ell$).
\end{rem}

By forming limits from the statement of \cref{algebraic-equals-topological-cons}, we obtain, in the same setting, that:
 \begin{cor}\label{equivariant-GAGA}Let $(Y,s)$ be a finite dimensional complex stratified scheme, and $H$ a complex group scheme, locally of finite type, acting on it in a stratified way. There is an equivalence of $\infty$-categories $$\Cons^\fd_{H}(Y,s;R)\simeq \Cons^\fd_{H^\an}(Y^\an,s^\an;R).$$\end{cor}

\subsection{Monoidality, Kan extensions and correspondences}\label{subsection-correspondences}

\begin{notation}\label{recall-linear-categories}
	The (very large) $\infty$-category of presentable stable $\infty$-categories and all functors is denoted by $\Pr_\st$. The (very large) $\infty$-category of presentable stable $\infty$-categories and left adjoint functors is denoted by $\Prl_\st$. The $\infty$-category of presentable stable $\infty$-categories and right adjoint functors is denoted by $\Prr_\st$.

	Let $\cE$ be a presentable stable symmetric monoidal $\infty$-category. We denote:\begin{itemize}\item by $\Prl_\cE$ the $\infty$-sub-category of $\Mod_{\cE}(\Pr^\otimes_\st)$ spanned by the objects belonging to $\Mod_{\cE}(\Prlo_\st)$ and whose morphisms are $\cE$-linear functors admitting an $\cE$-linear right adjoint. This is, in particular, a non-full subcategory of $\Mod_{\cE}(\Prlo_\st)$;
		\item by $\Prr_\cE$ the $\infty$-sub-category of $\Mod_{\cE}(\Pr^\otimes_\st)$ spanned by the objects belonging to $\Mod_{\cE}(\Prlo_\st)$ and whose morphisms are $\cE$-linear functors admitting an $\cE$-linear left adjoint;
		\item by $\Prlr_\cE$ the $\infty$-sub-category of $\Mod_{\cE}(\Pr^\otimes_\st)$ spanned by the objects belonging to $\Mod_{\cE}(\Prlo_\st)$ and whose morphisms are $\cE$-linear functors admitting both an $\cE$-linear left adjoint and an $\cE$-linear right adjoint. 
	\end{itemize}

	For $\cE=\Mod_R$, $R$ being a discrete commutative ring, we denote this categories by $\Prl_R, \Prr_R, \Prlr_R$. For $R$ prodiscrete or $\ell$-adic, we use the same notation while taking $\cE=\Mod_R^\cont$.

\end{notation} 
\begin{rem}\label{Prro}		Let $\cc^\otimes$ be a symmetric monoidal $\infty$-category. We denote by $\cc^{\otimes\textup{-}\op}$ the ``pointwise-opposite'' symmetric monoidal structure on $\cc^\op$ (\cite{MO-Barwick-pointwise-op}, \cite{Barwick-pointwise-op}). For example, if $\cc$ has finite products, $\cc^{\times\textup{-}\op}\simeq( \cc^{\op})^\amalg$.

	$\Prl_\cE$ carries a natural symmetric monoidal structure inherited from $\Prlo_\st$, which we denote by $\Prlo_\cE$. The monoidal law is the relative tensor product $-\otimes_\cE-$.
	
	Let $\Prr$ be the $\infty$-category of presentable $\infty$-categories with right adjoint functors between them. We denote by $\Prro$ the symmetric monoidal structure induced by the equivalence $$(\Prl)^\op\simeq \Prr$$ i.e. $$\Prro=\Prlop.$$ 
	
	As usual, for any stable presentable symmetric monoidal $\infty$-category, we also have the $\cE$-linear variant $\Prro_\cE$, whose objects are the same as $\Mod_{\cE}(\Pr_\st)$ and whose morphisms are those functors admitting a left adjoint. Note that we also have an equivalence $$(\Prl_\cE)^\op\simeq \Prr_\cE.$$ 
	This relies on the fact that we required the right adjoints to be $\cE$-linear.
	
	Finally, $\Prlr_\cE$ also admits a symmetric monoidal structure, which is auto-dual and makes the inclusion functors $\Prlro_\cE\subset \Prlo_\cE$ and $\Prlro_\cE\subset \Prro_\cE$ both symmetric monoidal.\end{rem}

\begin{notation}\label{small-linear-categories}
	Let $\cE$ be a small stable symmetric monoidal $\infty$-category. We denote by $\cat_{\infty,\cE}=\Mod_{\cE}(\Cat^\ex)$ the $\infty$-category of small stable $\cE$-linear $\infty$-categories and exact $\cE$-linear functors. We denote the Cartesian monoidal structure on this $\infty$-category by $\cat_{\infty,\cE}^\times$. 
	
	For $R$ a commutative ring (discrete, prodiscrete or $\ell$-adic) we adopt the notation $\cat_{\infty, R}$ for $\cat_{\infty,\Perf(R)}$.
\end{notation}

\cref{exodromy} implies that

\begin{cor}Let $(X,s)\in\Con$, and $\cE$ be presentable stable $\infty$-category. Then $\Cons(X,s;\cE)$ is a presentable stable $\cE$-linear $\infty$-category and $\Cons^\fd(X,s;\cE)$ is a small $\cE^\omega$-linear $\infty$-category.\end{cor}

\begin{lem}\label{Exit-is-symmetric}The functor $$\Exit:\Con\to \Cat$$
	$$(Y,s:X\to P)\mapsto \Exit(Y,s)$$
	carries a symmetric monoidal structure when we endow both source and target with the Cartesian symmetric monoidal structure. In other words, $\Exit$ preserves finite products.\end{lem}
\begin{proof}
	Given two stratified topological spaces $Y,s:Y\to P, W, t:W\to Q$, in the notations of \cite[A.6]{HA}, consider the commutative diagram of simplicial sets $$
	\begin{tikzcd}
		\Sing^{P\times Q}(Y\times W) \arrow[d] \arrow[r] & \Sing(Y\times W) \arrow[d] \arrow[r, "\sim"] & \Sing(Y)\times \Sing(W) \arrow[dd] \\
		N(P\times Q) \arrow[r] \arrow[d, "\sim"']        & \Sing(P\times Q) \arrow[rd, "\sim"]         &                                    \\
		N(P)\times N(Q) \arrow[rr]                       &                                             & \Sing(P)\times \Sing(Q)   .       
	\end{tikzcd}$$
	The inner diagram is Cartesian by definition. Therefore the outer diagram is Cartesian, and we conclude that $\Sing^{P\times Q}(Y\times W)$ is canonically equivalent to $\Sing^P(Y)\times \Sing^Q(W)$. Since $\Sing^P(Y)$ models the $\infty$-category of exit paths of $Y$ with respect to $s$, and similarly for the other spaces, we conclude.
\end{proof}

\begin{lem}[{\cite[Remark 4.8.1.8 and Proposition 4.8.1.15]{HA}}]\label{lem:presheaf}
	Let $\cE$ be a presentable symmetric monoidal $\infty$-category. There exist symmetric monoidal functors $$\cP_\cE^{(*)}:\Cat^{\times\textup{-}\op}\to\Prlro_\cE$$
	$$\cP_{\cE,(\dashv)}:\Cat^\times\to \Prlo_\cE$$
	$$\cP_{\cE,(\vdash)}:\Cat^\times\to \Prro_\cE$$
	
	sending an $\infty$-category $\cc$ to the $\infty$-category of $\cE$-valued presheaves $\Fun(\cc^\op, \cE)$, and a functor $F:\cc\to \cD$ to the functor $$F^*:\Fun(\cD^\op, \cE)\to \Fun(\cc^\op, \cE)$$ induced by precomposition by $F$, resp. to the functor $$F_\dashv:\Fun(\cc^\op, \cE)\to \Fun(\cD^\op, \cE)$$ induced by left Kan extension along $F$, resp. to the functor $$F_\vdash:\Fun(\cc^\op, \cE)\to \Fun(\cD^\op, \cE)$$ induced by right Kan extension along $F$.
\end{lem}
\begin{proof}
	The functor $\cP_{\cE,(\dashv)}$ takes values in $\Prl_\cE$ since for each $F:\cc\to \cD$ the functor $F_{\dashv}:\Fun(\cc^\op, \cE)\to \Fun(\cD^\op, \cE)$ admits a right adjoint given by $F^*$, and both $F_{\dashv}$ and $F^*$ are $\cE$-linear.
	
	Let us start with the case $\cE=\cS$, the $\infty$-category of spaces. The existence of the (strong) monoidal structure on $\cP_{\cS,(\dashv)}$ follows from \cite[Proposition 4.8.1.8]{HA}. Indeed, if we take $\cK=\varnothing, \cK'=\{\textup{all simplicial sets}\}$ in \textit{loc.cit.}, the functor $\cc\mapsto \cP_{\cK}^{\cK'}(\cc)$ is what we call $\cP_{\cE,(\dashv)}$, since it is the functor sending $F:\cc\to \cD$ to the left Kan extension of the Yoneda embedding $\cc\to \cP(\cc)$ along $\cc\xrightarrow{F} \cD\to \cP(\cD)$ (see the proof of \cite[Proposition 5.3.6.2]{HTT}). 
	
	In the case of a general presentable $\infty$-category $\cE$, this follows from \cref{change-of-coefficients-presheaves}. 
	
	As for $\cP^{(*)}$ and $\cP_{(\vdash)}$, the claim follows from the fact that we have a symmetric monoidal equivalence
	$$\Prro\simeq \Prlop.$$\end{proof}

\begin{cor}\label{cons-monoidal}Let $\cE$ be a presentable stable $\infty$-category. There are well-defined symmetric monoidal functors 
	
	$$\Cons^{(*),\otimes}_\cE:\Con^{\times\textup{-}\op}\to \Prlro_\cE$$
	$$(Y,s)\mapsto \Cons(Y,s;\cE)$$
	$$f\mapsto f^*=- \circ \Exit(f).$$
	
	$$\Cons^\otimes_{(\dashv), \cE}:\Con^{\times}\to \Prlo_\cE$$
	$$(Y,s)\mapsto \Cons(Y,s;\cE)$$
	$$f\mapsto f_\pushcons:=\Lan_{\Exit(f)}.$$
	
	$$\Cons^\otimes_{(\vdash), \cE}:\Con^{\times}\to \Prro_\cE$$
	$$(Y,s)\mapsto \Cons(Y,s; \cE)$$
	$$f\mapsto f_\vdash:=\Ran_{\Exit(f)}.$$ 
\end{cor}
\begin{proof}The previous constructions provide us with a symmetric monoidal functor
	$$\Con^{\times\textup{-}\op}\xrightarrow{\Exit(-)} \Cat^{\times\textup{-}\op}\xrightarrow{(-)^\op} \Cat^{\times\textup{-}\op}\xrightarrow{\cP_\cE^{(*)}} \Prlro$$ sending $$(X,s)\mapsto \Fun(\Exit(X,s),\cE),$$$$f\mapsto f^*$$
and similarly for the other two cases.\end{proof}

\begin{prop}\label{homotopy-invariance-of-cons}
	The functors constructed in \cref{cons-monoidal} take stratified homotopy equivalences to equivalences of $\infty$-categories.
	\end{prop}

	\begin{proof} 
	Indeed, consider a stratified homotopy $H:[0,1]\times Y\to Y$. This map has the property that the compositions $\{0\}\times Y\to[0,1]\times Y\to Y$ is $f\circ g$ and $\{1\}\times Y\to[0,1]\times Y\to Y$ is $\id_Y$. Since the functor $\Exit(-)$ preserves products (\cref{Exit-is-symmetric}), one gets a map $\Exit([0,1])\times \Exit(Y)\to \Exit(Y)$ such that $$\Exit(\{0\})\times \Exit(Y)\to \Exit([0,1])\times \Exit(Y)\to \Exit(Y)$$ is $\Exit(f)\circ\Exit(g)$ and $$\Exit(\{1\})\times \Exit(Y)\to \Exit([0,1])\times \Exit(Y)\to \Exit(Y)$$ is $\Exit(\id_Y)$. But since $[0,1]$ is contractible and unstratified, $\Exit([0,1])$ is equivalent to the terminal $\infty$-category $*$, and the two compositions are equivalent as functors $\Exit(Y)\to\Exit(Y)$. Therefore $\Exit(f)$ is a left inverse to $\Exit(g)$. By repeating the argument on $K$ one obtains that $\Exit(g)$ is a left inverse to $\Exit(f)$.
\end{proof}

\begin{prop}\label{tri-equivalences}
The functors constructed in \cref{cons-monoidal} take maps in $\tri$ to equivalences of $\infty$-categories.
\end{prop}
\begin{proof}
Let $f:(X,s)\to (Y,t)$ be in $\tri$. Let us first tackle the case where $\cE=\cS$. We follow the same strategy of \cite[Lemma 4]{Stratified-Group-Actions}: one reduces to proving that for any $\cF\in \Cons(X,s;\cS), \cG\in \Cons(Y,t;\cS)$ the unit and counit maps $$\cF\to f^*f_\dashv\cF, \quad f_\dashv f^*\cG\to \cG$$ are equivalences. Both statements can be checked after pulling back to strata, e.g. by \cite[Lemma 8.2.10]{Exodromy} (see also \cite[Lemma A.9.18]{HA} and its application in the proof of \cite[Theorem A.9.3]{HA}): this is an instance of the recollement principle\footnote{By recollement principle we mean the fact that a category of constructible sheaves over a space $Y$ which is stratified by a closed subspace $Z$ and its complement $U$ may be reconstructed from the categories of local systems on the two strata plus some gluing data. In particular, the restriction functors $\Cons(Y)\to \Loc(Z), \Cons(Y)\to \Loc(U)$ are jointly conservative, which is precisely the content of the cited \cite[Lemma 8.2.10]{Exodromy}.}.

Let thus $i_\alpha:Y_\alpha\hookrightarrow Y$ be a stratum of $Y$, and $$\begin{tikzcd}X\times_YY_\alpha\arrow[r, "f_\alpha"]\arrow[d, "j_\alpha"]&Y_\alpha\arrow[d,"i_\alpha"]\\
X\arrow[r,"f"]&Y\end{tikzcd}$$ the resulting pullback diagram. Note that by hypothesis $X\times_YY_\alpha$ consists of a single stratum of $X$, and $f_\alpha$ is a trivial Serre fibration.

By smooth base change \eqref{smooth-base-change} we have that $i_\alpha f_\dashv f^*\cG\simeq (f_\alpha)_\dashv f_\alpha^*i_\alpha^*\cG$. Now $f_\alpha$ is a trivial Serre fibration by hypothesis, and hence the adjunction $(f_\alpha)_\dashv:\Loc(X\times_YY_\alpha)\to \Loc(Y_\alpha):f_\alpha^*$ between categories of local systems is an equivalence.

Again by smooth base change we have that $j_\alpha^*f^*f_\dashv\cF\simeq f_\alpha^*(f_\alpha)_\dashv j_\alpha^*\cF$, and we conclude as above by using that $f_\alpha$ is a trivial Serre fibration.

We recover the case of general coefficients either by a similar proof or by combining \cref{exodromy} and \cref{change-of-coefficients-presheaves}.\end{proof}

\begin{prop}\label{cons-hyperdescent}The functor $\Cons_{(\dashv),\cE}$ from \cref{cons-monoidal} satisfies hyperdescent.
\end{prop}
\begin{proof}This follows from the stratified Seifert-Van Kampen theorem \cite[Theorem A.7.1]{HA} and the fact that $$\Fun(-,\cE):\Cat\to \Prl_\cE$$ (with left Kan extension functoriality) preserves colimits.
\end{proof}

We now turn to some essential recalls of the theory of Gaitsgory and Rozenblyum's correspondences.

Let $\cc$ be an $(\infty,1)$-category, and $\vert,\horiz,\adm$ classes of morphisms with the properties \cite[Chapter 7, 1.1.1]{GRI}. The $(\infty,2)$-category of correspondences $\Corr(\cc)^\adm_{\vert,\horiz}$ is defined in \cite[Chapter 7, 1.2.5]{GRI}.\footnote{Note that we indeed need a definition that works at least for $\cc$ a $(2,1)$-category, since for example $\PCon$ is such.}
\begin{rem}[{\cite[Chapter 7, 1.3.3]{GRI}}]\label{remark-adm=isom}
	If $\adm=\isom$ is the class of equivalences in $\cc$, then $$\Corr(\cc)_{\vert,\horiz}^\isom$$ is an $(\infty,1)$-category, which we denote simply by $\Corr(\cc)_{\vert,\horiz}$. We will always be in this situation. However, some results from \cite{GRI} used in the proof of \cref{take-constructibles-final} rely on $(\infty,2)$-categorical constructions.
\end{rem}

\begin{rem}Let $\cc_\vert$ and $\cc_\horiz$ the subcategories of $\cc$ spanned by all objects and vertical or horizontal morphisms, respectively. There are embeddings $$\cc_\vert\to\Corr(\cc)_{\vert,\horiz}^\adm$$
	$$\cc_\horiz^\op\to\Corr(\cc)_{\vert,\horiz}^\adm.$$
\end{rem}

\begin{rem}[{\cite[Chapter 9, 2.1.3]{GRI}}]
	If $\cc^\otimes$ is a symmetric monoidal $\infty$-category and the classes $\vert,\horiz,\adm$ are closed under tensor product, then there is a symmetric monoidal structure on $\Corr(\cc)^\adm_{\vert,\horiz}.$ which we denote by $$\Corr(\cc)_{\vert,\horiz}^{\adm,\otimes}.$$ In our specific case, we will always consider Cartesian symmetric monoidal structures. Note that the class $\psubm$ from \cref{defin-subm-prestacks} is closed under cartesian product and pullback (\cref{subm-pullback}), hence it is a good class for the theory of correspondences.
\end{rem}

\begin{notation}
	If $\cc$ is an $(\infty,1)$-category, we denote by $\all$ the class of all morphisms in $\cc$.
\end{notation}

\begin{recall}When $\horiz=\all$ and $\adm=\isom$, the $(\infty,1)$-category $\Corr(\cc)^\otimes_{\vert,\horiz}$ is also treated in \cite{Mann} and \cite{Scholze}. In what follows, we will sometimes use some results formulated in this context.
\end{recall}

We will need the following lemma:

\begin{lem}\label{slices-in-cartesian-structures}
	Let $\cc^\times$ be a Cartesian symmetric monoidal $\infty$-category. Let $X,Y\in \cc$ be objects. Then the map $$\cc_{/X}\times\cc_{/Y}\to \cc_{/X\times Y}$$ is cofinal.
	
	If $F:\cc^\times\to \cD^\times$ is a symmetric monoidal functor between symmetric monoidal structures, and $X,Y\in \cD$ are objects, then the map $F_{/X}\times F_{/Y}\to F_{/X\times Y}$ is cofinal.
\end{lem}
\begin{proof}
	By Quillen's Theorem A, it suffices to prove that fibers are contractible. This can be straightforwardly checked by considering, for $Z\to \cX\times\cY$, $Z$ representable, the canonical factorization $$Z\xrightarrow{\Delta} Z\times Z\to \cX\times \cY$$ and proving that it is initial amongst all factorizations $$Z\xrightarrow{\Delta} X\times Y\to \cX\times \cY$$ induced by maps $X\to \cX, Y\to \cY, Z\to X, Z\to Y$, with $X,Y$ representable.
\end{proof}
	
\begin{thm}\label{take-constructibles-final}Let $\cE$ be a presentable stable symmetric monoidal $\infty$-category. There is a symmetric monoidal functor of $\infty$-categories $$\Cons_\cE^{\corr,\otimes}:\Corr(\PCon)_{ \all, \psubm}^\times\to \Prro_\cE$$ with the following properties:

\begin{enumerate}\item its restriction along $$(\ConStk)_\vert\hookrightarrow \Corr(\ConStk)_{\all,\subm'}\hookrightarrow \Corr(\PCon)_{\all, \psubm}$$ left Kan extends $\Cons_{\cE,(\vdash)}$ from \cref{cons-monoidal}, preserves colimits and sends maps in $\tri$ and $\she'$ to equivalences;
\item dually, its restriction along $$(\ConStk)_\horiz^\op\hookrightarrow \Corr(\ConStk)_{\all,\subm'}\hookrightarrow \Corr(\PCon)_{\all, \psubm}$$right Kan extends $\Cons_{\cE}^{(*)}$ from \cref{cons-monoidal}, preserves limits and sends maps in $\tri$ and $\she'$ to equivalences;
\item its restriction along $$\Pro_\tri(\ConStk)_\vert\hookrightarrow\Corr(\Pro_\tri(\ConStk))_{\all, \subm''}\hookrightarrow\Corr(\PCon)_{\all, \psubm},$$ resp. $$\Pro_\tri(\ConStk)_\horiz^\op\hookrightarrow\Corr(\Pro_\tri(\ConStk))_{\all, \subm''}\hookrightarrow\Corr(\PCon)_{\all, \psubm},$$
 extends the points (1), resp. (2), by $\tri$-invariance, and sends maps in $\she''$ to equivalences;
\item its restriction along $$(\PCon)_\vert\hookrightarrow\Corr(\PCon)_{\all, \psubm},$$ resp.
$$(\PCon)_\horiz^\op\hookrightarrow\Corr(\PCon)_{\all, \psubm},$$ left (resp. right) Kan extends the previous point, preserves colimits (resp. limits) and send morphisms in $\pshe$ to equivalences.

\end{enumerate}
\end{thm}

\begin{proof} 
	Our starting datum is the functor $\Cons^{(*)}:\Con^{\op}\to \Prl_\cE$ defined in \cref{cons-monoidal}. 
	Let us prove that this functor satisfies the right Beck-Chevelley condition \cite[Chapter 7, Definition 3.1.5]{GRI} with respect to the classes $\vert=\subm,\horiz=\all,\adm=\isom$. First of all, for each $\beta:(Y',s')\to (Y,s)$ in $\subm\subset\Mor(\Con)$, the functor $\beta^*:\Cons(Y,s)\to \Cons(Y',s')$ admits a left adjoint $\beta_\dashv$. Then, consider a cartesian diagram
	
	$$\begin{tikzcd}(X',t')\arrow[r,"\alpha_0"]\arrow[d,"\beta_1"]&(X,t)\arrow[d,"\beta_0"]\\(Y',s')\arrow[r,"\alpha_1"]& (Y,s)\end{tikzcd}$$ where $\alpha_0,\alpha_1 \in \all, \beta_0,\beta_1\in \subm$. Then we want to prove that the base-change map \begin{equation}\label{smooth-base-change}\beta_{1\dashv}\alpha_0^*\to \alpha_1^*\beta_{0\dashv}\end{equation} is an equivalence of functors $\Cons(X,t;\cE)\to \Cons(Y',s';\cE)$. But since $\beta_0,\beta_1$ belong to $\subm$, \cref{lower-sharp-is-constructible-left-adjoint} implies that $\beta_{0\dashv}\simeq \beta_{0\#}, \beta_{1\dashv}\simeq \beta_{1\#}$, and thus it suffices to invoke \cref{recall-smooth-base-change-sheaves}.
	
	
	We now want to obtain an extension at the level of $(\infty,2)$-categories
	
	\begin{equation}\label{corr-extension-first}\begin{tikzcd}
		{\Con^\op} & \Prl_\cE \\
		{\Corr(\Con)_{\subm,\all}}
		\arrow[dotted, from=2-1, to=1-2]
		\arrow[from=1-1, to=2-1]
		\arrow[from=1-1, to=1-2]
	\end{tikzcd}.\end{equation}
	
	
	which encodes $(-^*,-_\dashv)$-functoriality.
	As explained to us by Lucas Mann, one can proceed as in the proof of \cite[Proposition A.5.10]{Mann} with $I=\subm, P=\isom$, and without the assumption that $I$ satisfies property (d) in \textit{loc. cit.}: one does not need \cite[Theorem 5.4]{Liu-Zheng-a} and instead of $\delta_{3,\{...\}}^*$ one works with $\delta_{2,\{...\}}^*$. Then the main observation is that one can invert edges in dimension 1 using \cite[Lemma 1.4.4]{Liu-Zheng-b}.

	The extended functor \eqref{corr-extension-first} carries a canonical symmetric monoidal structure, because $\Cons_{(\dashv)}:\Str\Top_\con^\times\to \Prlo_\cE$ is symmetric monoidal (\cref{cons-monoidal}) and thus we can apply\footnote{Alternatively, from \cite[Proposition A.5.10]{Mann} we already get a lax-monoidal structure, and one can check that it is indeed strong symmetric monoidal.} \cite[Chapter 9, Proposition 3.1.5]{GRI} with $\vert=\subm, \horiz=\adm=\all, \coadm=\isom$.
We will now perform a series of extensions:

\begin{itemize}

	\item Let us consider the Yoneda embedding $F:\Con\to \Str\TStk_\con$. We want to obtain a symmetric monoidal extension 

	\begin{equation}\label{corr-extension-to-stacks-rep}\begin{tikzcd}
			\Corr(\Con)_{\subm,\all}\arrow[d]\arrow[r]& \Prl_\cE\\
			\Corr(\Str\TStk_\con)_{\widetilde\subm,\all}\arrow[ru, dotted]&
	\end{tikzcd}\end{equation}
	(see \cref{defin-subm-prestacks} for the notation) which preserves limits in both $\vert$ and $\horiz$.

	The functor $F$ satisfies the conditions of \cite[Chapter 8, 6.1.1 and Theorem 6.1.5]{GRI}, (with $\mathbf C=\Con, \mathbf D=\PCon,\vert=\subm,\widetilde\subm$ respectively, $\horiz=\all, \adm=\isom$ for both $\mathbf C$ and $\mathbf D$), because:\begin{itemize}
		\item it preserves finite limits;
		\item it sends $\subm$ to $\widetilde\subm$ by definition of $\widetilde\subm\subset\Mor(\ConStk)$;
		\item for each $(Y,s)\in \Con$, the functor $((\Con)_\subm)_{/(Y,s)}\to ((\ConStk)_{\subm'})_{/F(Y,s)}$ is an equivalence. Indeed, since $F$ is fully faithful, it suffices to prove that for any $\cY\in \ConStk, \phi:\cY\to F(Y,s)$ in $\ConStk$, then $\cY$ belongs to the essential image of $F$. But this is true since any morphism in $\widetilde\subm$ is assumed to be representable, cf. \cref{defin-subm-prestacks}.
	\end{itemize}

	Therefore, we may apply \cite[Theorem 6.1.5]{GRI} to \eqref{corr-extension-first} and obtain a horizontal right Kan extension (in the terminology of that theorem) as in \eqref{corr-extension-to-stacks-rep}.

	In particular, when restricted to $(\Con)_\horiz^\op\to (\ConStk)_\horiz^\op$, this is the right Kan extension. 
	Also, the extended functor preserves limits in both $\vert$ and $\horiz$.

	As for the symmetric monoidal structure, we proceed as follows\footnote{Incidentally, the same verifications show that \cite[Proposition A.5.16]{Mann} can be applied to obtain the same result.}. By \cite[Chapter 9, Proposition 3.2.4]{GRI}, we obtain a right-lax monoidal functor $$\Corr(\ConStk)_{\subm, \all}^\times\to \Prlo_\cE.$$

	Let us prove that the obtained functor is indeed strongly monoidal. It suffices to prove that, for every $\cX,\cY\in \Str\TStk_\con,$ the map 

	$$\Cons(\cX;\cE)\otimes_\cE \Cons(\cY;\cE)\to \Cons(\cX\times \cY;\cE)$$
	 is an equivalence. 
	 But this map can be presented as the chain of equivalences 
	 \begin{equation}\label{lax-is-strong}\begin{gathered}\Cons(\cX;\cE)\otimes_\cE\Cons(\cY;\cE)={\colim_{(\Con)_{/\cX}}}^{\hspace{-10pt}\Prl}\Cons(X;\cE)\otimes_\cE {\colim_{(\Con)_{/\cY}}}^{\hspace{-10pt}\Prl}\Cons(Y;\cE)\simeq \\
		{\colim_{(\Con)_{/\cX}\times(\Con)_{/\cY}}}^{\hspace{-35pt}\Prl}\Cons(X;\cE)\otimes_\cE\Cons(Y;\cE)\simeq\\ {\colim_{(\Con)_{/\cX}\times(\Con)_{/\cY}}}^{\hspace{-35pt}\Prl}\Cons(X\times Y;\cE)\simeq \\
		{\colim_{Z\in(\Con)_{/\cX\times \cY}}}^{\hspace{-20pt}\Prl}\Cons(Z;\cE)\simeq \Cons(\cX\times\cY;\cE)\end{gathered}\end{equation}
	where the second-to-last equivalence holds by \cref{slices-in-cartesian-structures}.

\item The idea for this point comes from a discussion with Michele Pernice. We want to extend the functor \eqref{corr-extension-to-stacks-rep} to $\Corr(\Str\TStk_\con)_{\subm',\all}$, that is, we want to enlarge the class of vertical morphisms from representable smooth submersions to all smooth submersions. To this end, we want to apply \cite[Proposition A.5.14]{Mann} to the functor constructed in \eqref{corr-extension-to-stacks-rep}. The target category can be upgraded from $\Cat$ to $\Prl_\cE$: indeed, the upgrade to $\Prl$ is already in the proof of \textit{loc.cit.}, and adding the linear structure is straightforward. Let us thus check the conditions of \cite[Proposition A.5.14]{Mann}. In the notations of \textit{loc.cit.}, let $E=\widetilde\subm,E'=\subm', S\subset E$ be the class of smooth covers, i.e. maps in $\widetilde\subm$ whose source is representable and which are surjective.

\begin{enumerate}[a]
	\item For every $\cX\in\Str\TStk_\con$, $\Cons(\cX;\cE)$ is presentable.
	\item $\Cons_{(\dashv),\cE}:\Str\TStk_\con\to \Prl_\cE$ satisfies $S$-descent by construction (it is left-Kan-extended from stratified topological spaces).
	\item By definition of $\subm'$, for every map $f:\cX\to \cY$ in $E'$ there is a square 
	$$\begin{tikzcd}X\arrow[r, "f'"]\arrow[d, "g"]&Y\arrow[d, "g'"]\\
	\cX\arrow[r, "f"]&\cY
	\end{tikzcd}$$ where the top map is in $E$ (actually in $\subm$) and the vertical arrows are in $S$. Hence, the composition $f\circ g$ belongs to $E$.
	\item We want to prove that pullbacks of edges in $S$ remain smooth covers, and they are computed in the same way in $(\Str\TStk_\con)_{\widetilde\subm}$ and in $(\Str\TStk_\con)$. Indeed, the take a square
	\[\begin{tikzcd}
	X & Y \\
	\cX & \cY
	\arrow["{f'}", from=1-1, to=1-2]
	\arrow["{g'}"', from=1-1, to=2-1]
	\arrow["g"', from=1-2, to=2-2]
	\arrow["f", from=2-1, to=2-2]
\end{tikzcd},\]
Cartesian in $\Str\TStk_\con$, with $f\in E, g\in S$. Then $g'\in S$ because smooth covers are stable under pullback, and $f'\in E$ because smooth and representable maps are as well. We are left to prove that for any $\cX'\in \Str\TStk_\con$, equipped with maps $\cX'\to Y, \cX'\to \cX$ making the diagram 

\[\begin{tikzcd}
	{\cX'} \\
	& X & Y \\
	& \cX & \cY
	\arrow[bend left, from=1-1, to=2-3]
	\arrow[bend right, from=1-1, to=3-2]
	\arrow["{f'}", from=2-2, to=2-3]
	\arrow["{g'}"', from=2-2, to=3-2]
	\arrow["g"', from=2-3, to=3-3]
	\arrow["f", from=3-2, to=3-3]
\end{tikzcd}\]

commute, the natural map $\cX'\to X$ is smooth and representable (i.e. it belongs to $E$). Representability amounts to representability of $\cX'$ (since $X$ is represetnable), which follows from the fact that the map $\cX'\to Y$ is representable with representable target. Smoothness follows from the fact that $g'$ is a smooth cover and smoothness passes to smooth covers by definition.
\end{enumerate}

We can thus apply the result and obtain an extension of \eqref{corr-extension-to-stacks-rep} to a lax-monoidal functor\begin{equation}\label{corr-extension-to-stacks}\Corr(\Str\TStk_\con)_{\subm', \all}\to \Prl_\cE.\end{equation}
Symmetric monoidality is established in a similar way as in the previous point.

\item Let us consider the localization functor $$G:\ConStk\to \ConStk[\tri^{-1}].$$ By restriction from \eqref{corr-extension-to-stacks}, we get a functor $$(\ConStk)_\horiz^\op\to \Prl_\cE$$ satisfying the right Beck-Chevalley condition for $(\vert=\subm,\horiz=\all)$. This functor is right-Kan-extended from $\Cons(-,\cE)^{(*)}:\Con^\op\to \Prl_\cE$, hence it sends $\tri$ to equivalences by \cref{tri-equivalences}, and therefore it factors as the composition of $G$ and a functor $$\ConStk[\tri^{-1}]^\op\to \Prl_\cE$$ which also satisfies the right Beck-Chevalley condition with respect to $(\vert=\subm,\horiz=\all)$ (note that $\tri\subset \subm'$). Hence, again by \cite[Chapter 7, Theorem 3.2.2.(b)]{GRI}, it induces a functor $$\Corr(\ConStk[\tri^{-1}])_{\subm',\all}\to \Prl_\cE$$ (where we also call $\subm'$ the class generated by $\subm'$ in the localization), compatible with all of the above. This functor carries a symmetric monoidal structure by the same arguments as above.

\item Note that the functor $\ConStk[\tri^{-1}]\to \Pro_\tri(\ConStk[\tri^{-1}])$ is an equivalence (since $\tri\subset\isom$ in $\ConStk[\tri^{-1}]$), symmetric monoidal with respect to the Cartesian structures. On the other hand, there is a functor $\Pro(G):\Pro_\tri(\ConStk)\to \Pro_\tri(\ConStk[\tri^{-1}]),$ also symmetric monoidal. This induces a functor $$\Corr(\Pro_\tri(\ConStk))_{\subm'', \all}\to \Corr(\ConStk[\tri^{-1}])_{\subm',\all}$$ which is symmetric monoidal because pro-objects are compatible with Cartesian monoidal structures. Note that we cannot argue anymore that the restriction to horizontal morphisms $$\Pro_\tri(\ConStk)^\op\to \ConStk[\tri^{-1}]^\op$$ preserves limits, since $\Pro(G)$ need not preserve colimits.

\item Consider the Yoneda embedding $H:\Pro_\tri(\ConStk)\to \PCon.$ By the exact same arguments used in the case of $F$ above, this induces a horizontal right Kan extension

	\begin{equation}\label{corr-extension-to-prestacks}\begin{tikzcd}
			\Corr(\Pro_\tri(\ConStk))_{\subm'',\all}\arrow[d]\arrow[r]& \Prl_\cE\\
			\Corr(\PCon)_{\psubm,\all}\arrow[ru, dotted]&
	\end{tikzcd}\end{equation}
which is again symmetric monoidal.
\end{itemize}

	From this, by passing to opposite categories and opposite monoidal structures as in \cref{Prro}, we obtain the sought-after symmetric monoidal functor of $(\infty,1)$-categories \begin{equation}\Cons^{\corr,\otimes}_\cE:\Corr(\PCon)_{\all,\subm}^\times\to \Prro_\cE\end{equation}
	which encodes $(-^*,-_\vdash)$-functoriality and satisfies the conditions of the statement.
\end{proof}

We conclude this Appendix with a Lemma regarding the existence of exceptional inverse image functoriality for constructible sheaves.

\begin{lem}\label{upper-shriek-constructibles}Let $(Y,t)$ be a conically stratified space and $f:(X,s)\to (Y,t)$ an inclusion of a union of strata. Let $\cE$ be a presentable stable category. Then the functor $$f^!:\Sh(Y;\cE)\to S\Sh(X\cE)$$ restricts to a functor $$f^!:\Cons(Y,t;\cE)\to \Cons(X,s;\cE).$$
\end{lem}
\begin{proof}
If $f$ is an open embedding, then $f^!\simeq f^*$ and the result is already known. Thanks to this, and since all strata are locally closed, one can reduce to the case of a closed embedding of a union of strata. Let $j:(U,u)\to (Y,t)$ be the embedding of the open complement of $(X,s)$. Note that $(U,u)$ is in turn a union of strata of $(Y,t)$. Let $\cF$ be a constructible sheaf on $(Y,t)$. There is an exact sequence \begin{equation}\label{open-closed-exact-sequence}f_!f^!\cF\to \cF\to j_!j^*\cF.\end{equation}
Since $f$ is a closed embedding, $f_!\simeq f_*$ and furthermore $f_*$ is fully faithful. Hence, the counit transformation $f^*f_*\to \id$ is an equivalence, and by applying $f^*$ to \eqref{open-closed-exact-sequence} one obtains that $f^!\cF\simeq \textup{fib}(f^*\cF\to f^*j_*j^*\cF)$. In particular, $f^!$ preserves constructible sheaves if and only if $j_*$ does, and this is true by \cite[Proposition 6.35]{Exodromy-PT}.
\end{proof}

\end{appendices}

\bibliographystyle{alpha}
\bibliography{E3.bib}

\end{document}